\newcommand{\fg}{{\mathfrak g}}
\newcommand{\fv}{{\mathfrak v}}
\newcommand{\G}{{\mathbb{G}}}
\newcommand{\ve}{{\varepsilon}}
\newcommand{\R}{{\mathbb{R}}}
\newcommand{\bW}{\mathbb{W}}
\newcommand{\bV}{\mathbb{V}}
\newcommand{\zero}{\mathbf{0}}
\DeclareMathOperator{\diam}{diam}
\newcommand{\0}{{\bf 0}}
\newcommand{\stm}{\setminus}
\newcommand{\supp}{\operatorname{supp}}
\newtheorem{thm}{Theorem}[section]
\newtheorem{theorem}{Theorem}[section]
\newtheorem{lemma}[thm]{Lemma}
\newtheorem{corollary}[thm]{Corollary}
\theoremstyle{definition}
\newtheorem{proposition}[thm]{Proposition}
\theoremstyle{definition}
\theoremstyle{definition}
\newtheorem{definition}[thm]{Definition}
\theoremstyle{definition}
\newtheorem{example}[thm]{Example}
\newtheorem{remark}[thm]{Remark}
\newtheorem*{roadmap}{Roadmap}
\numberwithin{equation}{section}
\title{Singular integrals on $C^{1,\alpha}$ intrinsic graphs in step 2 Carnot groups}
\author{Vasileios Chousionis}
\address{Department of Mathematics, University of Connecticut}
\email{vasileios.chousionis@uconn.edu}
\author{Sean Li}
\address{Department of Mathematics, University of Connecticut}
\email{sean.li@uconn.edu}
\author{Lingxiao Zhang}
\address{Department of Mathematics, University of Connecticut}
\email{lingxiao.zhang@uconn.edu}
\thanks{V.~C.\ was supported by Simons Foundation Collaboration grant 521845 and by  NSF grant 2247117. L.~Z. is supported by AMS-Simons travel grant. }
\begin{document}

\begin{abstract}
We study singular integral operators induced by Calder\'on-Zygmund kernels in any step-$2$ Carnot group $\mathbb{G}$. We show that if such an operator satisfies some natural cancellation conditions then it is  $L^2$ bounded on all intrinsic graphs of $C^{1,\alpha}$ functions over vertical hyperplanes that do not have rapid growth at $\infty$.

In particular, the result applies to the Riesz operator $\mathcal{R}$ induced by the kernel
$$
\mathsf{R}(z)= \nabla_{\mathbb{G}} \Gamma(z), \quad z\in \mathbb{G}\backslash \{0\},
$$
the horizontal gradient of the fundamental solution of the sub-Laplacian. The $L^2$ boundedness of $\mathcal{R}$ is connected with the question of removability for Lipschitz harmonic functions. As a corollary of our result, we infer that closed subsets with positive $(Q-1)$-Hausdorff measure (where $Q$ is the homogeneous dimension of $\G$) of the intrinsic graphs mentioned above are non-removable. 
\end{abstract}

\maketitle

\section{Introduction}

The study of singular integral operators (SIOs) on Lipschitz graphs and other lower dimensional subsets of Euclidean spaces has been a central topic of investigation at the intersection of harmonic analysis and geometric measure theory; see e.g.  \cite{cal77,cmm,mmv,david88,dsbook,DS1,tolsabook}. Progress in this area has been driven in part by the significance of certain singular integrals in complex analysis, potential theory, and partial differential equations. Notably, the Cauchy transform and the $1$-codimensional Riesz transform play a key role in analyzing the removability of singularities for bounded analytic and Lipschitz harmonic functions.

A closed set \(E \subset \mathbb{R}^n\) is said to be removable for Lipschitz harmonic functions (RLH), if for every open set \(D\) and for every Lipschitz function \(f: D \to \mathbb{R}\) which is harmonic in \(D \setminus E\), \(f\) must be harmonic on all of \(D\). The characterization of  RLH sets is strongly dependent on the \((n - 1)\)-dimensional Riesz transform $\mathsf{R}_{n-1}$. This is the singular integral whose kernel is the gradient of the fundamental solution of the Laplacian, i.e.
\[
K_{n-1}(x) = \nabla \Phi_n(x),
\]
where 
$$\Phi_n(x)=\begin{cases} \log \frac{1}{|x|}, \mbox{ for }n=2, \\
|x|^{n-2}, \mbox{ for }n\geq 3.\end{cases}.$$

The connection between $\mathsf{R}_{n-1}$ and removability can be understood as follows. Let $E$ be a closed subset of $\R^n$. If $E\subset \R^n$ is $(n-1)$--upper regular and $\mathsf{R}_{n-1}$ is bounded on $L^2(\mathcal{H}^{n-1}|_{E})$ (where $\mathcal{H}^{n-1}|_{E}$ is the $(n-1)$-dimensional Hausdorff measure restricted on $E$) then $E$ is \textbf{not} RLH, see \cite[Theorem 4.4]{MR1372240}. On the other hand, if $\mathcal{H}^{n-1}(E)<\infty$ and $E$ is not RLH, then there exists a Borel set $F \subset E$  with $\mathcal{H}^{n-1}(F)>0$ such that $R^{n-1}$ is bounded in $L^2(\mathcal{H}^{n-1}|_F)$, see \cite{volberg}.  Using deep methods from non-homogeneous harmonic analysis, David and Mattila \cite{dm} (in $\R^2$) and  Nazarov, Tolsa, and Volberg \cite{ntv, ntv2} (in $\R^n, n \geq 3$) characterized RLH sets as the purely \((n - 1)\)-unrectifiable sets in \(\mathbb{R}^n\) (the sets which intersect every \(C^1\) hypersurface in a set of vanishing \((n - 1)\)-dimensional Hausdorff measure).

Recently, significant efforts have been made towards the extension of classical Euclidean analysis
and geometry into general non-Riemannian spaces, including Carnot groups and more abstract
metric measure spaces. In particular, sub-Riemannian analogues of the Laplacian, known as sub-Laplacians, have been extensively studied in Carnot groups and sub-Riemannian manifolds, with foundational contributions from Stein, Folland, and others \cite{stfol,fol}. In particular, a seminal result of Folland \cite{fol} ensures the existence of a fundamental solution for every sub-Laplacian on Carnot groups with homogeneous dimension $Q \geq 3$. A comprehensive overview of the theory of sub-Laplacians can be found in \cite{BLU}. 

Given a sub-Laplacian $\Delta_\G$ in a Carnot group $\G$, we will call solutions to the equation $\Delta_\G f=0$, $\Delta_\G$-harmonic, or simply harmonic, functions. Harmonic functions on Carnot groups have been studied extensively, see e.g. \cite{BLU}, and naturally one can also study removable sets for Lipschitz $\Delta_\G$-harmonic functions ($\G$-RLH sets) in Carnot groups. Here and in the following, we consider Lipschitz functions $f: (A, \|\cdot\|)\to \R$ where $A \subset \G$ and $\|\cdot\|$ is a homogeneous norm in $\G$. It was proven in \cite{CM,CMT} that if $\G$ is a Carnot group of homogeneous dimension $Q$ then, similarly to the analogous problem in Euclidean spaces, the critical dimension for $\G$-RLH sets is $Q-1$.

As in the Euclidean case, singular integrals on sets of dimension $Q-1$ play a crucial role in the study of $\G$-RLH sets on Carnot groups with homogeneous dimension $Q$. To better explain this, fix a sub-Laplacian $\Delta_\G$ on a Carnot group $\G$ and denote by $\Gamma$ its fundamental solution. Recall that in $\R^n$, the relevant singular integral for removability is the \((n - 1)\)-dimensional Riesz transform $\mathsf{R}_{n-1}$, whose kernel
is the gradient of the fundamental solution of the Laplacian. When one studies $\G$-RLH sets (with respect to $\Delta_\G$), then has to deal with a new convolution-type singular integral whose kernel is
\begin{equation}
\label{rieszker}
\mathsf{R}(q)= \nabla_{\mathbb{G}} \Gamma(q), \quad q\in \mathbb{G}\backslash \{\zero\},
\end{equation}
the horizontal gradient of the fundamental solution of the sub-Laplacian $\Delta_\G$. We will call $\mathsf{R}$, the \textit{Carnot Riesz kernel} and its associated singular integral, the \textit{Carnot Riesz transform}. We record that $\mathsf{R}$ is a homogeneous $(Q-1)$-dimensional Calder\'on-Zygmund kernel, see Section \ref{sec:prelim} for more details.

In $\R^n$, the $(n-1)$-dimensional Riesz transform $\mathsf{R}_{n-1}$ is bounded in $L^2(\mathcal{H}^{n-1}|_{\Sigma})$ for any $(n-1)$-dimensional Lipschitz graph. This is a foundational result in harmonic analysis and geometric measure theory due to Coifman, McIntosth and Meyer \cite{cmm} (for $n=2$) and Coifman, David and Meyer \cite{cdm} (for $n>3$). Moreover, David proved in \cite{david88} that any convolution type SIO associated to an odd, smooth $1$-codimensional Calder\'on-Zygmund kernel is $L^2$ bounded on $1$-codimensional Lipschitz graphs. 

The sub-Riemannian analogue of this problem poses several new and fascinating challenges which do not exist in the Euclidean case. First of all, one has to find meaningful substitutes of Lipschitz graphs in Carnot groups. This is not straightforward as Carnot groups cannot be viewed as Cartesian products of subgroups. One could consider Lipschitz maps from Euclidean spaces, but this approach fails as well, see e.g. \cite{AK,mag}.
It thus becomes necessary to introduce notions of intrinsic graphs that are compatible with the Carnot group structure. Intrinsic Lipschitz graphs (ILGs) on Carnot groups were introduced by Franchi, Serapioni, and Serra Cassano in \cite{FSS1} and they have played pivotal role in advancing sub-Riemannian geometric measure theory, see e.g.  \cite{SCnotes} and \cite{perttirev} for some informative overviews. ILGs can be thought of as the Carnot group counterparts of $1$-codimensional  Lipschitz graphs in Euclidean spaces. Similarly to Euclidean Lipschitz graphs they can be defined via a cone condition; see Section \ref{sec:ilgs}. It is imporant to mention that similarly to Lipschitz graphs in $\R^n$, ILGs in Carnot groups of step 2 satisfy an analogue of Rade\-macher's theorem; their sub-Riemannian blow-ups resemble vertical hyperplanes almost everywhere \cite{FSSCDiff}. 

Another significant challenge arises from the fact that the cancellation properties of  Carnot Riesz kernels are significantly different from its Euclidean counterpart. While Euclidean Riesz kernels are antisymetric, Carnot Riesz kernels are \textit{dilation antisymmetric}. 
This means that if $\G$ is a Carnot group of step $s$; 
$$\mathsf{R}(\delta_{-1}(p))=\mathsf{R}(-p_1,p_2,-p_3, \dots, (-1)^s p_s)=-\mathsf{R}(p) \quad \mbox{ for } p=(p_1,\dots,p_s) \in \G \stm \{\zero\},$$
where we used exponential coordinates and $(\delta_t)_{t \in \R}$ denotes the one-parameter family of group dilations on $\G$,
see Sections \ref{sec:carnot} and \ref{sec:ker} for more details. We note that if $\G$ has step $2$ then \textit{dilation antisymmetry} agrees with the notion of \textit{horizontal antisymmetry} which appeared in \cite{CFO19}, and in that case: 
$$\mathsf{R}(-z,t)=-\mathsf{R}(z,t) \quad \mbox{ for } (z,t) \in \G \stm \{\zero\}.$$

 The different symmetries of $\mathsf{R}$ have surprising consequences. It was recently proved in \cite{CLYRiesz} that in the first Heisenberg group $\mathbb{H}^1$, there exist compactly supported ILGs $\Sigma$ where the Heisenberg Riesz transform is unbounded in $L^2(\mathcal{H}^3|_{\Sigma})$. This stands in sharp contrast with the Euclidean case and suggests that other novel phenomena should be expected in the study of SIOs on $1$-codimensional subsets of Carnot groups. 

In the case of the first Heisenberg group, some positive results are known if one assumes some extra regularity conditions on the ILGs.  In \cite{CFO19} it was shown that any horizontally antisymmetric, $3$-dimensional  Calder\'on-Zygmund kernel  defines an $L_2$--bounded singular integral on compactly supported intrinsic $C^{1,\alpha}$ graphs. In \cite{FORiesz}, F\"assler and Orponen proved that the Heisenberg Riesz transform is $L_2$--bounded on intrinsic Lipschitz graphs which satisfy some extra vertical regularity conditions. 

In this paper we study convolution type singular integral operators on $1$--codimensional ILGs of step--$2$ Carnot groups. Let $\G$ be a step--$2$ Carnot group of homogeneous dimension $Q$ equipped with a strongly homogeneous norm $\|\cdot\|$, see Section \ref{sec:carnot} for the definitions. We fix a $(Q-1)$-dimensional Calder\'on-Zygmund kernel $K: \mathbb{G}\backslash\{\zero\} \to \mathbb{R}^d, d \in \mathbb{N},$ and we consider truncated SIOs defined by
$$
T_{\mu,\ve} f(p) = \int_{\|q^{-1}\cdot p\|>\ve} K(q^{-1}\cdot p)f(p)\,d\mu(q), f\in L^2(\mu), \ve>0,
$$
where $\mu$ is a positive $(Q-1)$--upper regular measure. The singular integral operator  $T:=T_K$ (associated to $K$) is bounded on $L^2(\mu)$, if the operators
$$
f\mapsto T_{\mu, \epsilon}f
$$
are bounded on $L^2(\mu)$ with constants independent of $\epsilon>0$.

We prove that if $K$ satisfies a weak cancellation condition known as \textit{annular boundedness (AB)}, see Definition \ref{AB}, and $\mu$ is supported on a $C^{1,\alpha}$ intrinsic graph which decays at infinity, then $T_{K,\mu}$ is bounded in $L^2(\mu)$. We thus generalize earlier results from \cite{CFO19} which were valid for compactly supported $C^{1,\alpha}$ intrinsic graphs of the first Heisenberg group (equipped with the Koran\'yi norm).

Before stating our main theorem we record that a vertical hyperplane is a $1$-codimensional subgroup of $\mathbb{G}$ whose tangent space is transversal to a vector in the first layer of $\text{Lie}\,(\mathbb{G})$.

\begin{theorem}\label{theoreminfinity}
Let $\G$ be a step-2 Carnot group equipped with a strongly homogeneous norm $\|\cdot\|$ and let $\mathbb{W}$ be a vertical hyperplane. Let $\alpha>0$ such that $\phi\in C^{1, \alpha}(\mathbb{W})$ and let $\Sigma$ be the intrinsic graph of $\phi$. Further assume that there exist $0<\gamma, \theta \in (0,1)$ such that for every $w=(x,z)\in \mathbb{W}, p\in \Sigma$,
\begin{equation}\label{infinity}
|\phi^{(p^{-1})}(x,z)| \lesssim \|(x,z)\|^{1-\theta}, \qquad |\nabla^\phi \phi (x,z)|\lesssim \|(x,z)\|^{-\gamma}.
\end{equation}

 Let $K:\mathbb{G}\backslash\{\zero\} \to \mathbb{R}^d$ be a $(Q-1)$-dimensional Calder\'on-Zygmund kernel such that $K$ and $K^\ast$ (where $K^*(p):=K(p^{-1})$) satisfy the annular boundedness condition. Then, the associated singular integral is bounded in $L^2(\mu)$ for any $(Q-1)$-Ahlfors-David regular measure $\mu$ supported on the intrinsic graph of $\phi$.
\end{theorem}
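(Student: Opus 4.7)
My plan is to deduce Theorem \ref{theoreminfinity} from a local $T(1)$-type criterion adapted to $(Q-1)$-Ahlfors--David regular measures on step-$2$ Carnot groups. Since $\mu$ is AD regular (hence doubling on $\Sigma$), such a $T(1)$ theorem is available in a form analogous to David--Journ\'e/Christ; it reduces the desired $L^2(\mu)$ boundedness to two inputs: a weak boundedness property (WBP) of the form $|\langle T_\epsilon \chi_B,\chi_B\rangle_\mu|\lesssim\mu(B)$ uniformly in balls $B$ centered on $\Sigma$ and in $\epsilon>0$, and the membership $T_K(1),\,T_K^\ast(1)\in \mathrm{BMO}(\mu)$ (where the latter uses the kernel $K^\ast$).

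To verify both inputs I would first parameterize $\Sigma$ by $\bW$ through the intrinsic graph map of $\phi$, so that every integral on $\Sigma$ becomes an integral on $\bW$ against a pulled-back measure that, by AD regularity, is comparable to the canonical surface measure on $\bW$ with a bounded density. The geometric heart of the argument is then that $\phi\in C^{1,\alpha}$ makes $\Sigma$ quantitatively close, at every point $p\in\Sigma$ and every scale $r>0$, to the vertical plane $\bW_p$ through $p$ whose normal is determined by $\nabla^\phi\phi(p)$, with deviation of order $r^{1+\alpha}$ in the intrinsic metric. On $\bW_p$, which is itself a subgroup of $\G$, the integral $T_\epsilon 1$ at $p$ reduces to an annular integral of $K$ over $\{\epsilon<\|q\|<R\}$, uniformly bounded in $\epsilon$ and $R$ by the annular boundedness of $K$; the dual statement for $T^\ast$ uses AB of $K^\ast$. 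This supplies the WBP and the BMO bound in the ``flat'' case.

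To transfer these bounds from $\bW_p$ to $\Sigma$ I would run a dyadic perturbation argument in the spirit of \cite{CFO19}: decompose the truncated integral into contributions from dyadic annuli around $p$; on each annulus replace $\Sigma$ by $\bW_p$ and estimate the error by combining the $(Q-1)$-dimensional Calder\'on--Zygmund decay of $K$ with the H\"older control on $\nabla^\phi\phi$, so that the error on scale $r$ is of order $r^\alpha$ and the resulting geometric series converges. The perturbation only uses that vertical planes are subgroups, the AB cancellation of $K$ and $K^\ast$, the homogeneity of the kernel, and the $C^{1,\alpha}$ regularity of $\phi$; none of these rely on the special commutator structure of $\mathbb{H}^1$ used in the compactly supported Heisenberg case.

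Finally, the non-compactness of $\Sigma$ is handled by the decay \eqref{infinity}. Once the near-field $\|q^{-1}\cdot p\|\le R$ has been treated as above, the far-field error is estimated by an integral in which $K$ decays like $\|q\|^{-Q}$ while, after parameterization, the displacement $|\phi^{(p^{-1})}(x,z)|\lesssim\|(x,z)\|^{1-\theta}$ and the flatness defect $|\nabla^\phi\phi(x,z)|\lesssim\|(x,z)\|^{-\gamma}$ provide the additional decay needed to offset the $(Q-1)$-dimensional surface volume; positivity of $\theta$ and $\gamma$ then yields absolute convergence uniformly in $p$, $\epsilon$, and $R$, so letting $R\to\infty$ closes the argument. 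The step I expect to be the main obstacle is the third paragraph: implementing the dyadic perturbation quantitatively in a general step-$2$ group, since the coordinate expression of $\nabla^\phi$ and the convolution structure are more intricate than in $\mathbb{H}^1$, and the symmetry-based manipulations of \cite{CFO19} must be replaced throughout by arguments that rely only on AB of both $K$ and $K^\ast$ together with $C^{1,\alpha}$ regularity.
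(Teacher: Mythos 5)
Your overall scheme---reduce to a fixed surface measure, run a $T1$-type argument for the ADR measure on $\Sigma$, obtain cancellation at each point and scale by comparing the graph with a vertical plane on which the annular boundedness of $K$ and $K^\ast$ applies, control the replacement errors with the Calder\'on--Zygmund estimates, and use \eqref{infinity} for the far field---is essentially the scheme of the paper (which tests on Christ cubes rather than verifying WBP plus BMO, a cosmetic difference). The genuine gap is at what you yourself call the geometric heart: you assert that $\phi\in C^{1,\alpha}(\mathbb{W})$ makes $\Sigma$ deviate from the vertical plane determined by $\nabla^\phi\phi(p)$ by $O(r^{1+\alpha})$ at scale $r$, and the whole dyadic perturbation rests on this. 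That quantitative affine approximation is not a formal consequence of the intrinsic $C^{1,\alpha}$ definition: condition \eqref{eq:c1alpha} controls only the intrinsic differential, i.e.\ increments in the horizontal directions of $\mathbb{W}$, and says nothing directly about increments of $\phi^{(p^{-1})}$ in the second-layer ($t,s$) directions. Establishing it in a general step-2 group is precisely Proposition \ref{holder}, whose proof integrates along the intrinsic vector fields $Y_l$, reaches the vertical directions through the commutators $[Y_p,Y_q]$ and Chow's theorem, and uses Gronwall-type bounds; and the exponent one actually obtains is $1+\alpha/2$, not $1+\alpha$ (the loss comes exactly from the vertical directions you did not address). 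Since you provide no argument for this step---which is the main new technical content beyond \cite{CFO19}---the proposal is incomplete, even though the remaining outline (AB applied on the translated plane, the area formula reduction, and the decay \eqref{infinity} offsetting the $(Q-1)$-dimensional growth in the far field) matches the paper's treatment of the large-scale term in spirit.

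A secondary omission: your perturbation and far-field estimates implicitly require H\"older regularity of the adjoint kernel $K^\ast$, which for a possibly non-symmetric strongly homogeneous norm does not follow verbatim from \eqref{czhol}; one needs the paper's Lemma \ref{l:standardHolder} (with the degraded exponent $\beta/2$, proved via the skew-symmetric BCH form), together with a chaining argument when the two comparison points are not close enough relative to their norms, before the scale-by-scale error bounds are legitimate.
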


We now discuss the assumptions of Theorem \ref{theoreminfinity} in more detail. First, the notion of a strongly homogeneous norm is more restrictive than the widely used notion of a homogeneous norm (a strongly homogeneous norm $\|\cdot\|$ satisfies $\|\delta_t(p)\|=|t| \|p\|$ for $t \in \R$, while a homogeneous norm only has to satisfy the previous equation for $t>0$). However, several homogeneous norms which appear frequently in the literature are  strongly homogeneous. For example, the Carnot-Caratheodory norm, the gauge norms of Laplacians, and the Koran\'yi norms are strongly homogeneous, see Section \ref{sec:carnot} for proofs of these claims.

Assumption \eqref{infinity} is a natural condition ensuring fast decay at infinity. Our condition is checkable and it is trivially satisfied by compactly supported graphs. 

The cancellation assumptions in Theorem \ref{theoreminfinity} are quite general and they are satisfied by all the interesting kernels that we are aware of. In certain cases, they can be simplified. For example, if  $\|\cdot\|$ is symmetric then we only need to assume that $K$ satisfies the AB condition. In particular, our theorem applies in the case when $\|\cdot\|$ is symmetric and $K$ is antisymmetric. More importantly, the cancellation conditions in Theorem \ref{theoreminfinity} are satisfied by horizontally antisymmetric kernels. As a corollary:

\begin{corollary}
\label{maincoro}
Let $\G$ be a step-2 Carnot group equipped with a strongly homogeneous norm $\|\cdot\|$ and let $\mathbb{W}$ be a vertical hyperplane. Let $\alpha>0$ such that $\phi\in C^{1, \alpha}(\mathbb{W})$ and furthermore assume that $\phi$ satisfies \eqref{infinity} for some $\gamma>0, \theta \in (0,1)$. Then, the Carnot Riesz transform, formally defined by
$$\mathcal{R}_{\mu} f (p)=\int \mathsf{R}(q^{-1}\cdot p) f(q) d \mu(q)$$
where $\mathsf{R}$ is the Carnot Riesz kernel from \eqref{rieszker}, is bounded in $L^2(\mu)$ for any $(Q-1)$-Ahlfors-David regular measure $\mu$ supported on the intrinsic graph of $\phi$.
\end{corollary}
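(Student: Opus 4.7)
The plan is to deduce Corollary \ref{maincoro} from Theorem \ref{theoreminfinity} by verifying that the Carnot Riesz kernel $\mathsf{R}$ and its transpose $\mathsf{R}^*(p):=\mathsf{R}(p^{-1})$ are $(Q-1)$-dimensional Calder\'on--Zygmund kernels satisfying the annular boundedness condition. The graph-side hypotheses, namely $\phi\in C^{1,\alpha}(\mathbb{W})$ and the decay estimate \eqref{infinity}, are already built into the statement of the corollary, so only the kernel-side assumptions need checking.

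First, $\mathsf{R}$ is a homogeneous $(Q-1)$-dimensional CZ kernel, as recalled in Section \ref{sec:prelim}. The inversion $p\mapsto p^{-1}$ is an isometry for any strongly homogeneous norm and preserves the Haar measure of $\G$, so the size and H\"older estimates required of a $(Q-1)$-dimensional CZ kernel transfer directly from $\mathsf{R}$ to $\mathsf{R}^*$.

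Second, the cancellation ingredient is horizontal antisymmetry. Because the sub-Laplacian $\Delta_\G$ is invariant under the reflection $r(z,t)=(-z,t)$ of the first layer, its fundamental solution satisfies $\Gamma\circ r=\Gamma$; the horizontal vector fields are negated by $r_*$, so horizontal derivatives pick up a sign, yielding the announced identity $\mathsf{R}(-z,t)=-\mathsf{R}(z,t)$. In a step-2 Carnot group we have $(z,t)^{-1}=(-z,-t)$, hence
\[
\mathsf{R}^*(-z,t)=\mathsf{R}(z,-t)=-\mathsf{R}(-z,-t)=-\mathsf{R}^*(z,t),
\]
so $\mathsf{R}^*$ is horizontally antisymmetric as well.

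Finally, I would invoke the implication \emph{horizontal antisymmetry $\Rightarrow$ AB}, which is flagged in the paragraph preceding the corollary and is the one genuinely substantive input. The strategy for that implication is the classical odd-kernel cancellation: on any pair of concentric annuli one symmetrizes the AB integral under the horizontal reflection, the odd part cancels, and the remaining even contribution, concentrated on a set of small measure near the reflection fixed-set, is controlled by the pointwise CZ size bound. Combining this with the first three steps verifies every hypothesis of Theorem \ref{theoreminfinity} for both $\mathsf{R}$ and $\mathsf{R}^*$, and the $L^2(\mu)$ boundedness of $\mathcal{R}_\mu$ follows immediately. The only real obstacle is establishing the horizontal-antisymmetry-implies-AB step, which is a general kernel estimate independent of the geometry of $\Sigma$ and is therefore best carried out separately in the preliminaries.
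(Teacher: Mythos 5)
Your overall route is the paper's: check that $\mathsf{R}$ is a $(Q-1)$-dimensional CZ kernel, that it is horizontally antisymmetric, that $\mathsf{R}^{*}$ inherits horizontal antisymmetry (this is Remark \ref{rem:ashasadj}), and then feed Lemma \ref{antisymmetry} into Theorem \ref{theoreminfinity}. The gap is in the one step you yourself single out as substantive, namely horizontal antisymmetry $\Rightarrow$ AB. Your sketch (``the odd part cancels, and the remaining even contribution, concentrated on a set of small measure near the reflection fixed-set, is controlled by the pointwise CZ size bound'') describes a mechanism that both misidentifies what happens and could not close the estimate. If any even leftover survived and had to be absorbed using only $|K(w)|\lesssim \|w\|^{1-Q}$, the bound would grow with the number of dyadic scales between $r$ and $R$: a vertical hyperplane has homogeneous dimension $Q-1$, so $\int_{\mathbb{W}\cap (B(0,2t)\setminus B(0,t))}\|w\|^{1-Q}\,d\mathcal{L}^{N-1}(w)\sim 1$ per scale, and no bound uniform in $0<r<R<\infty$ follows; moreover such a leftover would not be localized near the fixed set $\{x=0\}$ of the reflection. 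The actual proof of Lemma \ref{antisymmetry} gives \emph{exact} cancellation: the horizontal reflection is the dilation $\delta_{-1}$, which maps the vertical plane $\mathbb{W}$ to itself, preserves $\mathcal{L}^{N-1}$, and -- because the norm is strongly homogeneous, so $\|\delta_{-1}(w)\|=\|w\|$ -- fixes the $\|\cdot\|$-radial cutoffs $\psi^{R},\psi^{r}$. Symmetrizing the integrand therefore makes the AB integral vanish identically. This is exactly where the strong homogeneity hypothesis of the corollary is used, and your sketch never invokes it; as written, the key lemma is not established.

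A secondary inaccuracy: inversion is not an isometry of a strongly homogeneous norm. That would be symmetry, $\|p^{-1}\|=\|p\|$, which is an independent property; strong homogeneity only gives invariance under $\delta_{-1}$, i.e.\ under $(x,z)\mapsto(-x,z)$, whereas $p^{-1}=(-x,-z)$. So the size and H\"older bounds do not ``transfer directly'' to $\mathsf{R}^{*}$; in the paper this transfer is Remark \ref{adjointremark} together with Lemma \ref{l:standardHolder}, and the H\"older exponent degrades from $\beta$ to $\beta/2$ because of the second-layer twist in the BCH formula. Fortunately this step is not needed for the corollary: Theorem \ref{theoreminfinity} only requires that $K$ be a CZ kernel and that $K$ and $K^{*}$ satisfy AB, and the latter follows from horizontal antisymmetry of $\mathsf{R}$ and $\mathsf{R}^{*}$ once the AB lemma is proved as above.
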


Finally, our approach has applications in the study of removability for Lipschitz harmonic function in Carnot groups. First, we record that:
\begin{theorem}
\label{thm-removintro}
Let $\G$ be a Carnot group equipped with homogeneous norm $\|\cdot\|$. Assume that $\mu$ is a positive Radon measure on $\mathbb{G}$, satisfying the growth condition $\mu(B(p,r))\leq Cr^{Q-1}$ for $p\in \mathbb{G}$ and $r>0$, and such that the support $\text{supp}\,\mu$ has locally finite $(Q-1)$-dimensional Hausdorff measure. If the Carnot Riesz transform $\mathcal{R}_{\mu}$ is bounded on $L^2(\mu)$, then $\text{supp}\,\mu$ is not removable for Lipschitz harmonic functions.
\end{theorem}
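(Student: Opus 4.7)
The plan is to construct, from the hypotheses, a function $U:\G\to\R$ that is Lipschitz with respect to the Carnot--Carath\'eodory metric, is $\Delta_\G$-harmonic on $\G\setminus \text{supp}\,\mu$, yet fails to be harmonic globally. The natural candidate is the potential
\[
U(p) = \int \Gamma(q^{-1}\cdot p)\, h(q)\, d\mu(q),
\]
where $h\in L^\infty(\mu)$ will be chosen below. Since $\Gamma$ is the fundamental solution of $\Delta_\G$, we have $-\Delta_\G U = h\mu$ in the sense of distributions, so $U$ is automatically harmonic on $\G \setminus \text{supp}(h\mu)$ and, if $h\mu\neq 0$, fails to be harmonic on any neighborhood of a point of $\text{supp}(h\mu)\subseteq \text{supp}\,\mu$. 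The horizontal gradient of $U$ is $\nabla_\G U = \mathcal{R}_\mu h$, and in a Carnot group a function with bounded horizontal gradient is Lipschitz with respect to the CC-metric; hence the whole problem reduces to producing an $h\in L^\infty(\mu)$, not identically zero, for which $\mathcal{R}_\mu h$ is bounded on \emph{all} of $\G$.

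To construct such an $h$, I would run the standard non-homogeneous Calder\'on--Zygmund machinery of Nazarov--Treil--Volberg, adapted to $\G$. The $L^2(\mu)$ boundedness of $\mathcal{R}_\mu$, combined with the growth condition $\mu(B(p,r))\le Cr^{Q-1}$ and the standard CZ estimates on $\mathsf{R}$, gives a Cotlar-type inequality for the maximal truncated Riesz transform $\mathcal{R}_{\mu,\ast}$, and hence a weak $(1,1)$-bound. Fix a ball $B_0$ with $\mu(B_0)>0$ and apply this bound to $f=\chi_{B_0}$: choosing the threshold sufficiently large (in terms of the operator norm), one extracts a set $E\subset B_0$ with $\mu(E)\ge \tfrac12 \mu(B_0)>0$ on which $\mathcal{R}_{\mu,\ast}\chi_{B_0}$ is uniformly bounded. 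Setting $h = \chi_E$, one sees that $\mathcal{R}_{\mu,\ast} h$ is bounded on $\text{supp}\,\mu$. Off $\text{supp}\,\mu$, the boundedness of $\mathcal{R}_\mu h(p)$ follows from a direct estimate using the CZ size bound $|\mathsf{R}(q^{-1}\cdot p)|\lesssim \|q^{-1}\cdot p\|^{-(Q-1)}$, the $(Q-1)$-growth of $\mu$, and a dyadic annular decomposition around $p$.

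Once $h$ is in hand, the assembly is routine: $U$ is well-defined everywhere, $\nabla_\G U=\mathcal{R}_\mu h$ is bounded, so $U$ is CC-Lipschitz and hence Lipschitz with respect to any homogeneous norm $\|\cdot\|$; $U$ is $\Delta_\G$-harmonic on $D := \G\setminus \text{supp}\,\mu$; but since $h\mu \not\equiv 0$, $U$ is not harmonic on $\G$, witnessing that $\text{supp}\,\mu$ is not RLH. The principal obstacle is the passage from the $\mu$-a.e.\ pointwise control of $\mathcal{R}_\mu h$ (granted by $L^2$-boundedness and the weak-$(1,1)$ extraction) to genuine pointwise control on $\text{supp}\,\mu$; this is precisely the non-homogeneous Cotlar step that requires using the growth hypothesis, and is where most of the technical effort goes. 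The remaining analytic ingredients---the distributional identity $-\Delta_\G \Gamma = \delta_{\zero}$ from Folland, and the equivalence between bounded horizontal gradient and CC-Lipschitz behavior---are classical in the Carnot setting and require no adaptation.
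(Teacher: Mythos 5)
Your skeleton is the same as the paper's (the Uy scheme adapted in \cite{CFO19}: build a potential $f(p)=\int \Gamma(q^{-1}\cdot p)h(q)\,d\mu(q)$ with $0\le h\in L^\infty(\mu)$ non-trivial, show it is harmonic off $\supp\mu$, globally Lipschitz, and not harmonic at all of $\G$), but the two steps you label as routine are exactly where the argument breaks. First, the construction of $h$. Taking $h=\chi_E$, where $E$ is a level set on which $\mathcal{R}_{\mu,\ast}\chi_{B_0}$ is small, gives no control on $\mathcal{R}_{\mu,\ast}\chi_E$: the difference is $\mathcal{R}_{\mu,\ast}\chi_{B_0\setminus E}$, which is completely uncontrolled, so ``one sees that $\mathcal{R}_{\mu,\ast}h$ is bounded on $\supp\mu$'' does not follow. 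Worse, you need a bound on \emph{all} of $\G$, and your proposed ``direct estimate'' off the support fails: if $d=\operatorname{dist}(p,\supp\mu)$, the size bound $|\mathsf{R}|\lesssim \|\cdot\|^{1-Q}$ together with the growth condition only yields $|\mathcal{R}(h\mu)(p)|\lesssim \log(R_0/d)$, because each of the roughly $\log(R_0/d)$ dyadic annuli between scale $d$ and the diameter of $\supp(h\mu)$ contributes a constant; without cancellation there is no uniform bound as $d\to 0$. This is precisely why the paper works with smoothened truncations $\widetilde{\mathcal{R}}^i_{\varepsilon}$ (which map finite measures into $C_0(\G)$), deduces from Nazarov--Treil--Volberg that $\widetilde{\mathcal{R}}^{i,\ast}_{\varepsilon}:\mathcal{M}(\G)\to L^{1,\infty}(\mu)$ uniformly, and then applies the Davie--{\O}ksendal dual extremal lemma to produce $h_\varepsilon$ with $0\le h_\varepsilon\le\chi_E$, $\int h_\varepsilon\,d\mu\ge\mu(E)/2$ and $\|\widetilde{\mathcal{R}}^i_{\mu,\varepsilon}h_\varepsilon\|_{L^\infty(\G)}\lesssim 1$ --- a supremum over the whole group. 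The final $h$ is a weak-$\ast$ limit of the $h_\varepsilon$ and is in general not an indicator; a level-set extraction cannot replace this duality step.

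Second, the Lipschitz step. You assert that bounded horizontal gradient implies CC-Lipschitz ``with no adaptation needed,'' but your $U$ is only known to be continuous on $\G$ and $C^1$ with bounded horizontal gradient on $\G\setminus\supp\mu$, and $\supp\mu$ being Lebesgue-null is not enough (a Cantor--staircase-type function is continuous with vanishing derivative off a null set and is not Lipschitz). This is exactly where the hypothesis that $\supp\mu$ has locally finite $\mathcal{H}^{Q-1}$ measure --- which your proposal never uses --- must enter: the paper proves Lemma \ref{lem:liphorizgrad} (continuous, $C^1$ off a set of locally finite $(Q-1)$-dimensional measure, with $\nabla_\G f\in L^\infty$ off that set, implies Lipschitz on $\G$), and its proof requires the slicing estimate of Lemma \ref{lem:1CFO} along horizontal lines together with the horizontal polygonal quasiconvexity of Carnot groups (Lemma \ref{lem:hpq}), which is the genuinely new ingredient of this section rather than a classical fact. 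You also omit the limiting argument needed to pass from the truncated/smoothed estimates to the actual potential (Banach--Alaoglu for $h_\varepsilon$, Arzel\`a--Ascoli plus the sub-Laplacian mean value property for harmonicity of the limit off $E$); the non-harmonicity conclusion via $\langle\Delta_\G f,1\rangle=-\int h\,d\mu<0$ is then as in your sketch.
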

Theorem \ref{thm-removintro} was proved in \cite{CFO19} for the case of the first Heisenberg group $\mathbb{H}^1$. The proof in the case of general Carnot groups is very similar and a new argument is required for only one step in the proof. We provide the details for the required modifications in Section \ref{sec:rem} as well as an outline of the proof from \cite{CFO19} for the convenience of the reader.

As a rather straightforward corollary of Theorems \ref{maincoro} and \ref{thm-removintro} we obtain that:
\begin{corollary}\label{riesznonremovable}
 Let $\G$ be a step-2 Carnot group equipped with a strongly homogeneous norm $\|\cdot\|$ and let $\mathbb{W}$ be a vertical hyperplane. Let $\alpha>0$ such that $\phi\in C^{1, \alpha}(\mathbb{W})$ and furthermore assume that $\phi$ satisfies \eqref{infinity} for some $\gamma>0, \theta \in (0,1)$. If $E$ is a closed subset of the intrinsic graph of $\phi$ with positive $(Q-1)$-dimensional Hausdorff measure, then $E$ is not removable for Lipschitz harmonic functions.
\end{corollary}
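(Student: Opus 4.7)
The plan is to derive Corollary \ref{riesznonremovable} by combining Corollary \ref{maincoro} with Theorem \ref{thm-removintro} via a routine restriction argument. Let $\Sigma$ denote the intrinsic graph of $\phi$ and set $\nu := \mathcal{H}^{Q-1}|_\Sigma$. Since $\phi \in C^{1,\alpha}(\bW)$, the measure $\nu$ is $(Q-1)$-Ahlfors-David regular on $\Sigma$; this is standard for $C^1$ intrinsic graphs in step-$2$ Carnot groups and should be available from earlier sections of the paper or from the literature on intrinsic graphs. Corollary \ref{maincoro} then supplies the $L^2(\nu)$-boundedness of the Carnot Riesz transform $\mathcal{R}_\nu$.

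Next, set $\mu := \mathbf{1}_E\, \nu$. Since $E$ is closed with $\mathcal{H}^{Q-1}(E)>0$, $\mu$ is a nonzero positive Radon measure; it inherits the growth bound $\mu(B(p,r)) \leq C r^{Q-1}$ from $\nu$, and $\supp\mu \subset E \subset \Sigma$ has locally finite $(Q-1)$-dimensional Hausdorff measure. The key observation is that for each $\ve > 0$, $f \in L^2(\mu)$, and $p \in E$, the identity
$$
\mathcal{R}_{\mu,\ve} f(p) = \mathcal{R}_{\nu,\ve}(\mathbf{1}_E f)(p)
$$
follows directly from $\mu = \mathbf{1}_E\, \nu$. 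Consequently
$$
\|\mathcal{R}_{\mu,\ve} f\|_{L^2(\mu)} \leq \|\mathcal{R}_{\nu,\ve}(\mathbf{1}_E f)\|_{L^2(\nu)} \leq C \|f\|_{L^2(\mu)}
$$
uniformly in $\ve > 0$, so $\mathcal{R}_\mu$ is $L^2(\mu)$-bounded.

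Theorem \ref{thm-removintro} now applies to $\mu$ and yields that $\supp \mu$ is not removable for Lipschitz harmonic functions. Since $\supp \mu$ is a closed subset of $E$ and non-removability passes up to closed supersets (any Lipschitz function harmonic off $E$ is automatically harmonic off any closed $F \subset E$, so if $E$ were RLH then every closed subset of $E$ would be too), we conclude that $E$ itself is not RLH. The only step requiring any genuine verification is the Ahlfors-David regularity of the graph measure $\nu$; once that is in hand, the remainder of the argument is purely formal.
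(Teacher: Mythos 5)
Your argument is correct and follows essentially the same route as the paper: the paper likewise deduces $(Q-1)$-regularity of the graph measure from intrinsic Lipschitzianity (via Remark \ref{remk:c1alip}, which uses \eqref{infinity}, together with the ADR property of intrinsic Lipschitz graphs), applies Corollary \ref{maincoro}, restricts to $\mu=\mathcal{H}^{Q-1}|_E$, and invokes Theorem \ref{thm-removintro} plus the monotonicity of removability noted in the footnote to Definition \ref{def:remov}. Your only loose phrasing is attributing the ADR property to $C^1$ regularity alone; it really comes from $\phi$ being intrinsic Lipschitz, which here requires the boundedness of $\nabla^\phi\phi$ guaranteed by \eqref{infinity} and continuity, exactly as the paper records.
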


Moreover, arguing as in \cite[Corollary 5.7]{CFO19} we obtain:
\begin{corollary}
\label{cororem2}
Let $\G\cong (\R^N,\cdot)$ be a step-2 Carnot group of homogeneous dimension $Q$ and let $\Omega \subset \R^{N-1} \cong \mathbb{W}$ be an open set. If $\phi$ is Euclidean $C^{1,\alpha}$ on $\Omega$ and $E$ is a closed subset of the intrinsic graph of $\phi$ over $\mathbb{W}$ with positive $(Q-1)$-dimensional Hausdorff measure, then $E$ is not removable for Lipschitz harmonic functions.
\end{corollary}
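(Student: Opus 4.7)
The plan is to reduce to Corollary~\ref{riesznonremovable} by a localization and cutoff argument, in the spirit of \cite[Corollary 5.7]{CFO19}. First, I would note that non-removability is monotone upward in the set: if $F\subseteq E$ fails to be RLH via a Lipschitz function $f$ on an open $D$ which is harmonic on $D\setminus F$ but not on all of $D$, then the same $f$ is harmonic on $D\setminus E\subseteq D\setminus F$ while still failing harmonicity somewhere, so $E$ fails to be RLH as well. Hence it suffices to produce a compact $E_0\subseteq E$ of positive $(Q-1)$-Hausdorff measure that is not RLH. Since $\mathcal{H}^{Q-1}(E)>0$ and the projection $\pi(E)\subseteq \Omega$, inner regularity yields a compact $K\subseteq\Omega$ with $\mathcal{H}^{Q-1}(E\cap\pi^{-1}(K))>0$. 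Fix a smooth Euclidean cutoff $\chi\colon \mathbb{W}\to[0,1]$ with $\chi\equiv 1$ on a neighborhood of $K$ and $\supp\chi\Subset\Omega$, and set $\tilde\phi := \chi\phi$ extended by zero, let $\tilde\Sigma$ be its intrinsic graph, and set $E_0:=E\cap\pi^{-1}(K)\subseteq\tilde\Sigma$.

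The bulk of the argument is to verify that $\tilde\phi$ satisfies the hypotheses of Corollary~\ref{riesznonremovable}. Intrinsic $C^{1,\alpha}$ regularity of $\tilde\phi$ follows from a step-$2$ version of \cite[Proposition 4.11]{CFO19}: a compactly supported Euclidean $C^{1,\alpha}$ function on a vertical plane of a step-$2$ Carnot group is intrinsic $C^{1,\alpha}$. The decay conditions \eqref{infinity} are then nearly automatic from compact support. The intrinsic gradient $\nabla^{\tilde\phi}\tilde\phi$ is a polynomial expression in $\tilde\phi$ and its Euclidean partial derivatives, hence compactly supported, so $|\nabla^{\tilde\phi}\tilde\phi(x,z)|\lesssim \|(x,z)\|^{-\gamma}$ holds trivially for any $\gamma>0$. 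For the intrinsic translates $\tilde\phi^{(p^{-1})}$, normality of $\mathbb{W}$ in a step-$2$ group ensures that left translation by $p^{-1}$ sends the graph of $\tilde\phi$ to a graph that agrees with a fixed coset of $\mathbb{W}$ outside a bounded set, so $\tilde\phi^{(p^{-1})}$ is uniformly bounded in both $p\in\tilde\Sigma$ and $(x,z)\in\mathbb{W}$; combining $\tilde\phi^{(p^{-1})}(0)=0$ with intrinsic $C^{1}$ regularity gives $|\tilde\phi^{(p^{-1})}(x,z)|\lesssim \|(x,z)\|$ near the origin, while uniform boundedness gives $|\tilde\phi^{(p^{-1})}(x,z)|\lesssim 1\lesssim \|(x,z)\|^{1-\theta}$ at infinity, for any $\theta\in(0,1)$.

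Once these hypotheses are verified, Corollary~\ref{riesznonremovable} applied to $\tilde\phi$ shows that the closed subset $E_0\subseteq\tilde\Sigma$ of positive $(Q-1)$-Hausdorff measure is not RLH, and the monotonicity observation transfers this conclusion to $E$. The main technical point I expect to handle carefully is the uniformity of the bounds on $\tilde\phi^{(p^{-1})}$ and $\nabla^{\tilde\phi^{(p^{-1})}}\tilde\phi^{(p^{-1})}$ as $p$ ranges over the full graph $\tilde\Sigma$, including basepoints lying in the planar ``tail'' of $\tilde\Sigma$ outside $\supp\tilde\phi$. This reduces to an explicit combinatorial calculation on the Lie algebra $\fg = \fv_1\oplus\fv_2$ via the Baker--Campbell--Hausdorff formula, tracking how intrinsic left-translation acts on graphs over $\mathbb{W}$; such computations are precisely of the type already developed in the earlier sections of the paper, so the proof is a clean application of the machinery rather than the introduction of any new ingredient.
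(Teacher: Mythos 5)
Your proposal is correct and follows essentially the same route as the paper, whose own proof simply defers to the localization argument of \cite[Corollary 5.7]{CFO19}: restrict to a compact piece of positive measure, cut off to obtain a compactly supported Euclidean $C^{1,\alpha}$ function whose intrinsic graph still contains that piece, verify that it is intrinsic $C^{1,\alpha}$ and trivially satisfies \eqref{infinity}, apply Corollary \ref{riesznonremovable}, and conclude by monotonicity of non-removability. The one ingredient you assert rather than prove — that a compactly supported Euclidean $C^{1,\alpha}$ function on a vertical hyperplane of a step-2 group is intrinsic $C^{1,\beta}$ for some $\beta>0$ — is precisely the step the paper also leaves to the reference, and your indication of how to check it (via the explicit left-translation formulas for $\phi^{(p^{-1})}$ and $\nabla^{\phi}\phi$ from Section \ref{curves}, using compact support to make the bounds uniform in the basepoint) is the right way to fill it in.
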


The proof of Theorem \ref{theoreminfinity} follows the same scheme as in \cite{CFO19}. The key step, which we consider that is of independent interest, provides quantitative affine approximation for intrinsic $C^{1,\alpha}$ graphs in any step-2 Carnot group, see Proposition \ref{holder}. This was previously proven only for Heisenberg groups (see \cite[Proposition 4.1]{CFO19} for $\mathbb{H}^1$ and \cite[Proposition 3.29]{DDFO22} for Heisenberg groups of arbitrary dimension) and our proof naturally differs quite significantly. 

Finally, we note that the reason we focus only on step-$2$ Carnot groups  is that the proof of Proposition \ref{holder} for higher-step groups would require commutators of higher degree, and $C^{1, \alpha}$ regularity would not be sufficient. In addition, it is unknown if a Rademacher-type theorem for intrinsic $1$-codimensional Lipschitz graphs (asserting that they can be approximated by vertical hyperplanes at almost every point as in \cite{FSSCDiff}) holds in Carnot groups of step $3$ or higher.

\begin{roadmap}
In Section~\ref{sec:prelim}, we introduce all the necessary background related to Carnot groups, singular integrals, and intrinsic  Lipschitz and  $C^{1,\alpha}$ graphs in Carnot groups of step $2$. In Section~\ref{curves} we prove  an effective H\"older estimate for the (vertical) affine approximation of intrinsic $C^{1,\alpha}$ graphs.  In Section \ref{4} we provide the proof of the Theorem \ref{theoreminfinity}. Finally, in Section \ref{sec:rem} we discuss removability for Lipschitz harmonic functions. In particular we prove Theorem \ref{thm-removintro}.
\end{roadmap}

\textbf{Acknowledgement.} We thank the anonymous referee carefully reading our manuscript and for providing useful comments.

\section{Preliminaries}\label{sec:prelim}
\subsection{Carnot groups} \label{sec:carnot} A \textit{Carnot} group is a connected and simply connected nilpotent Lie group whose associated Lie algebra $\mathfrak{g}$ admits a \emph{stratification} of the form
$$
\mathfrak{g} = \fv_1 \oplus \cdots \oplus \fv_s, 
\quad [\fv_1,\fv_i] = \fv_{i+1}  \text{ for } i=1,\dots,s-1,
\quad [\fv_1,\fv_s] =\{0\},
$$
where $\fv_1,\dots,\fv_s$ are non-zero subspaces of  $\mathfrak{g}$. The integer $s \geq 1$ is called the \textit{step} of $\mathbb{G}$. Notably, the  Lie algebra $\mathfrak{g}$ is generated by iterated Lie brackets of elements from the first layer $\fv_1$ of the stratification. This layer, referred to as the \textit{horizontal layer} of the Lie algebra, consists of elements known as \textit{horizontal tangent vectors}. We let $n_1 = \dim(\fv_1)$.

Since $\mathbb{G}$ is a connected, simply connected and nilpotent Lie group, the
\textit{exponential map} $\exp:\fg\to\G$ is a global diffeomorphism. Therefore, we can naturally identify $\mathbb{G}$ with a Euclidean space $\G = \R^N$. We call elements of $\exp(\fv_1)$ {\em horizontal vectors}.

Let $\mathbb{K} = \exp(\fv_2 \oplus ... \oplus \fv_s)$. This is the commutator subgroup of $\G$ and so $\G / \mathbb{K} \cong \R^{n_1}$.  Let $\pi : \G \to \R^{n_1}$ be the natural projection map. We say a subgroup of $\G$ is {\em vertical} if it contains $\mathbb{K}$. These are also the subgroups of the form $\pi^{-1}(Y)$ where $Y$ is a subspace of $\R^{n_1}$. We say a set $V \subset G$ is a {\em vertical hyperplane} if $V = \pi^{-1}(X)$ where $X \subset \R^{n_1}$ is a $n_1-1$-dimensional affine plane. 

The exact formula for the group law in $\mathbb{G}$
follows from the {\it Baker--Campbell--Hausdorff (BCH) formula}, see e.g. \cite[Theorem 2.2.13]{BLU}, which asserts that
\begin{equation}\label{BCHformula}
\exp(U) * \exp(V) = \exp( U + V + \tfrac12 [U,V] + \tfrac1{12} ([U,[U,V]] + [V,[V,U]]) + \cdots), \quad \mbox{ for }U,V \in \fg.
\end{equation}
We remark that since the Lie group $\mathbb{G}$ is nilpotent, the sum on the right hand side of \eqref{BCHformula} contains finitely many terms.  Using exponential coordinates, $p = (U_1,\dots, U_s)$ for $p = \exp(U_1 + \dots+ U_s)$  with $U_i \in \fv_i, i=1,\dots, s,$ we see that $\zero= (0,\dots, 0)$ and by \eqref{BCHformula} that $p^{-1}=(-p_1,\dots)$, see also \cite[p.36]{ruzhansky}.

For $t \in \R
$, we define $\delta_t:\fg\to\fg$ by setting $\delta_t(X)=t^iX$ if $X\in \fv_i$ and extending the map to $\fg$ by linearity. Using the canonical identification of the Lie algebra $\fg$ with the group $\mathbb{G}$ (via the exponential mapping) we can define dilations on $\mathbb{G}$, which we'll also denote by $\delta_t$.

A function $\|\cdot\|: \G \to [0,\infty)$ is called a \textit{strongly homogeneous norm} if it is continuous with respect to the Euclidean metric and satisfies
\begin{enumerate}
\item \label{posdil}$\|\delta_t(p)\| =|t| \|p\|$ for all $t \in \R$ and $p \in \G$,
\item \label{norm0} $\|p\|=0$ if and only if $p=0$.
\end{enumerate}
A continuous function $\|\cdot\|: \G \to [0,\infty)$ which satisfies \eqref{norm0} and \eqref{posdil} for positive $t$ is called  {\em homogeneous}. A homogeneous norm is called \textit{symmetric} if $\|p^{-1}\|=\|p\|$ for all $p \in \G$.

It is well known, see e.g. \cite[5.1.4]{BLU}, that homogeneous norms are globally equivalent in $\G$, meaning that if $\|\cdot\|_1,\|\cdot\|_2$ are homogeneous norms in $\G$ then there exists a constant $c>1$ such that
$$c^{-1} \,\|p\|_2 \leq \|p\|_1 \leq c \,\|p\|_2 \mbox{ for all }p \in \G.$$ If $\|\cdot\|$ is homogeneous then $d(p,q)=\|q^{-1} \cdot p\|$ satisfies a relaxed triangle inequality, see \cite[Proposition 5.1.8]{BLU}. A {\it strongly homogeneous} metric $d$ on $\G$ is any metric which is continuous (with respect to the Euclidean topology), left invariant
and $1$-homogeneous with respect to the dilations
$(\delta_t)_{t \in \R}$; i.e. 
$$
d(\delta_t(p),\delta_t(q)) = |t|\, d(p,q)
$$
for $p,q\in\G$ and $t \in \R$. If $d$ is a strongly homogeneous metric on $\G$, then the function $p \to d(p,0)$ is a strongly homogeneous norm.

The Hausdorff
dimension of $\G$ with respect to any strongly homogeneous metric is equal to
\begin{equation}\label{Q}
Q = \sum_{i=1}^s i \, \dim \fv_i,
\end{equation}
which is also frequently referred to as the \textit{homogeneous dimension of $\G$.} In this paper we will always assume that $Q \geq 3$.

We remark that if $\G$ is not Abelian (i.e., when $s>1$), then $Q>N$ where $N$ denotes the topological
dimension of $\G$. As a consequence, strongly homogeneous metrics are not bi-Lipschitz equivalent to Riemannian metrics on
the Euclidean space $\R^N$. It is not difficult to check that the Jacobian determinant of the dilation
$\delta_t$ is everywhere equal to $t^Q$. Using this fact, one can show that if $d$ is a homogeneous metric and $B_d(p,r)=\{q \in \G: d(p,q)<r\}$ then the Haar measure of $B_d(p,r)$ equals $c_d r^Q$ where $c_d$ is the Haar measure of $B_d(0,1)$.

In this paper, we will be mainly concerned with step-2 Carnot groups. In this case, \eqref{BCHformula} becomes
\begin{equation}\label{BCHformula2}
\exp(U) * \exp(V) = \exp( U + V + \tfrac12 [U,V] ).
\end{equation}
Using exponential coordinates $p = (U_1,U_2)$ for $p = \exp(U_1 + U_2)$ with $U_1 \in \fv_1$ and $U_2 \in \fv_2$, and an application of \eqref{BCHformula2} gives
\begin{equation}\label{step2grouplaw}
p \cdot q = (U_1+V_1,U_2+V_2+\tfrac12[U_1,V_1])
\end{equation}
for $p=(U_1,U_2)$ and $q=(V_1,V_2)$. From now on we fix a Carnot group $\mathbb{G}$ of step $2$ and we choose a basis $\{X_1,\ldots,X_m\}$ for the first layer $\fv_1$. This basis is sometimes referred to as the \textit{horizontal frame}, which linearly spans all the horizontal directions. We also choose a basis $\{Z_1, \ldots, Z_{(Q-m)/2}\}$ for the second layer of $\text{Lie}\,(\mathbb{G})$. Since every point  of $\mathbb{G}$ can be written as
$$
\exp\left(\sum_{i=1}^m x_i X_i + \sum_{i=1}^{\frac{Q-m}{2}} z_i Z_i \right),
$$ we will denote points in $\G$ by $p=(x,z)$ where $x \in \R^m, z \in \R^{\frac{Q-m}{2}}$. Thus, we can view $\G$ as $(\R^N, \cdot)$ where $N:=\frac{Q+m}{2}$. 

The horizontal gradient of a function $u:\Omega \to \mathbb{R}$ on an open set $\Omega \subseteq \mathbb{G}$ is defined by
$$
\nabla_{\mathbb{G}} u = \sum_{j=1}^m (X_j u)X_j,
$$
and the sub-Laplacian of $u$ is
$$
\Delta_{\mathbb{G}} u = \sum_{j=1}^m X_j^2 u.
$$
By a celebrated result of Folland \cite{fol}, the sub-Laplacian $\Delta_{\mathbb{G}}$ admits a
fundamental solution which we will denote by $\Gamma$. We refer the reader to \cite[Chapter 5]{BLU} for several important properties of $\Gamma$. 

We will focus on pairs $(\G, \|\cdot\|)$ where $\|\cdot\|$ is a strongly homogeneous norm. Below we list some natural such  examples:
\begin{itemize}[wide, labelwidth=!, labelindent=0pt]
\item $(\G, \|\cdot\|_{cc})$ where $\|\cdot\|_{cc}$ denotes the \textit{Carnot-Carath\'eodory norm} on $\G$:
\begin{align*}
\|z\|_{cc}& = \inf\big\{r>0\big| \exists\text{ an absolutely continuous curve } \gamma:[0,1]\to \mathbb{G} \text{ such that
}\gamma(0)=0, \gamma(1)=z, \\
&\qquad \qquad  \gamma'(s) = \sum_{j=1}^m ra_j(s)X_j(\gamma(s)) \text{ where the } a_j \text{ are continuous and satisfy} \sum_{j=1}^m |a_j(s)|^2<1\big\}.
\end{align*}
We note that $d_{cc}(p,q)=\|q^{-1} \cdot p\|_{cc}$ defines a metric on $\G$, known as the Carnot-Carath\'eodory metric. It is well known that the Carnot-Caratheodory norm is homogeneous (see e.g. \cite[Section 5.2 and Chapter 19]{BLU}). Below we provide a proof of its strong homogeneity since we could not find a proof in the literature. 
\begin{lemma}
The Carnot-Caratheodory norm $\|\cdot\|_{cc}$ is strongly homogeneous.
\end{lemma}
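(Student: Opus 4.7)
The plan is to establish $\|\delta_t(p)\|_{cc} = |t| \|p\|_{cc}$ for every $t \in \R$ and $p \in \G$. Since $\|\cdot\|_{cc}$ is already known to be a homogeneous norm (so the identity holds for $t > 0$), and since the family $\{\delta_t\}$ is a one-parameter group of automorphisms with $\delta_t = \delta_{-1} \circ \delta_{|t|}$ when $t<0$, the whole problem reduces to showing that $\|\delta_{-1}(p)\|_{cc} = \|p\|_{cc}$ for every $p \in \G$.

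The key observation I would use is that $\delta_{-1}$ is a Lie group automorphism of $\G$ whose differential at the identity acts on $\fv_1$ as multiplication by $-1$. Because automorphisms send left-invariant vector fields to left-invariant vector fields, and a left-invariant field is determined by its value at $\zero$, this gives the pointwise identity
\[
d\delta_{-1}\big|_p\, X_j(p) \;=\; -X_j(\delta_{-1}(p)) \qquad \text{for all } p\in\G \text{ and } j = 1,\ldots,m.
\]
Verifying this identity is the only genuinely non-definitional step, and it follows cleanly from the observations just listed.

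Once this is in hand, I would use it to transport admissible curves. Given any $r$ and any admissible curve $\gamma:[0,1]\to\G$ joining $\zero$ to $p$ with $\gamma'(s)=\sum_j r a_j(s)X_j(\gamma(s))$ and $\sum_j |a_j(s)|^2 < 1$, I would set $\tilde\gamma := \delta_{-1}\circ\gamma$. Then $\tilde\gamma(0)=\zero$, $\tilde\gamma(1)=\delta_{-1}(p)$, and the chain rule together with the identity above yields
\[
\tilde\gamma'(s) \;=\; \sum_{j=1}^m r\bigl(-a_j(s)\bigr)\, X_j\bigl(\tilde\gamma(s)\bigr),
\]
with the new coefficient vector $(-a_j(s))_j$ satisfying the same Euclidean bound $\sum_j|-a_j(s)|^2 < 1$. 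Thus $\tilde\gamma$ is admissible for $\delta_{-1}(p)$ with the same parameter $r$, and taking the infimum over all such $r$ gives $\|\delta_{-1}(p)\|_{cc} \le \|p\|_{cc}$. Applying the same construction to $\delta_{-1}(p)$ and using $\delta_{-1}\circ\delta_{-1}=\mathrm{id}$ produces the opposite inequality and closes the argument. No single step is expected to be a serious obstacle; the only point worth writing out carefully is the compatibility of $d\delta_{-1}$ with the horizontal frame.
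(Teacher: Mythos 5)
Your proposal is correct and takes essentially the same route as the paper: both arguments transport an admissible curve through the dilation and use that the pushforward of the horizontal frame satisfies $(\delta_t)_*X_j = tX_j$, then take the infimum over admissible speeds and reverse the roles of the two points to get equality. You merely specialize to $t=-1$ (combining with the known positive homogeneity) and justify the frame identity via the automorphism property of $\delta_{-1}$, a step the paper's computation $\tilde\gamma'(s)=(\delta_t)_*\gamma'(s)=\sum_j t(\|z\|_{cc}+\epsilon)a_j(s)X_j(\tilde\gamma(s))$ uses implicitly.
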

\begin{proof}
We only verify that $\|\delta_t(z)\|_{cc} = |t|\|z\|_{cc}$ for each $t\in \mathbb{R}$; the other properties in the definition of the homogeneous metric are well known. Fix arbitrary $\epsilon>0$ and fix an arbitrary absolutely continuous curve $\gamma: [0,1] \to \mathbb{G}$ such that $\gamma(0)=0, \gamma(1)=z$ and such that there exist continuous functions $a_1(s), \ldots, a_m(s)$ satisfying
$$
\gamma'(s)= \sum_{j=1}^m (\|z\|_{cc}+\epsilon) a_j(s)X_j(\gamma(s)), \quad \sum_{j=1}^m |a_j(s)|^2<1.
$$
Consider $\tilde \gamma: [0,1]\to \mathbb{G}$ with $\tilde \gamma(s):= \delta_t(\gamma(s))$, which is a curve that goes from 0 to $\delta_t(z)$.
We have
$$
\tilde \gamma'(s) = (\delta_t)_* \gamma'(s)=\sum_{j=1}^m (\|z\|_{cc}+\epsilon) a_j(s)(\delta_t)_*X_j(\gamma(s)) =\sum_{j=1}^m t(\|z\|_{cc}+\epsilon) a_j(s)X_j(\tilde \gamma(s)).
$$
By the arbitrariness of $\epsilon>0$, we have $\|\delta_t(z)\|_{cc} \leq |t|\|z\|_{cc}$.

As $\gamma = \delta_{1/t}(\tilde{\gamma})$, we can repeat the argument above with the roles of $\gamma$ and $\tilde{\gamma}$ reversed to get the reverse inequality. This prooves the lemma.
\end{proof}
\item $(\G,\|\cdot\|_{\Gamma})$ where $\|\cdot\|_{\Gamma}$ is defined as
$$
\|x\|_{\Gamma}:=\begin{cases} 
\big(\Gamma(x)\big)^{\frac{1}{2-Q}}, \quad &x\neq 0,\\
0, \quad &x=0,
\end{cases}
$$
and $\Gamma$ is the fundamental solution of the sub-Laplacian $\Delta_{\G}$.
\begin{lemma}
$\|\cdot\|_{\Gamma}$
is a strongly homogeneous norm.
\end{lemma}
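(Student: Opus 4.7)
The plan is to verify the three defining properties of a strongly homogeneous norm by transferring known properties of the fundamental solution $\Gamma$ of $\Delta_\G$ through the formula $\|\cdot\|_\Gamma=\Gamma^{1/(2-Q)}$. The only substantive new ingredient is the extension of the standard positive-dilation homogeneity of $\Gamma$ to all nonzero scalars, which amounts to a reflection symmetry $\Gamma\circ\delta_{-1}=\Gamma$.

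First, I would recall the standard facts about $\Gamma$ from \cite[Chapter 5]{BLU}: $\Gamma$ is $C^\infty$ and strictly positive on $\G\setminus\{\zero\}$, tends to $+\infty$ at the origin, decays to zero at infinity, and is homogeneous of degree $2-Q$ under positive dilations, i.e.\ $\Gamma(\delta_t p)=t^{2-Q}\Gamma(p)$ for $t>0$. From these facts, continuity of $\|\cdot\|_\Gamma$ on $\G\setminus\{\zero\}$ is immediate, and continuity at $\zero$ follows since $\Gamma(p)\to+\infty$ together with $1/(2-Q)<0$ forces $\|p\|_\Gamma\to 0$. The non-degeneracy $\|p\|_\Gamma=0\iff p=\zero$ then follows because $\Gamma$ is finite and positive off the origin.

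The crucial step is the identity $\Gamma\circ\delta_{-1}=\Gamma$. By the group law \eqref{step2grouplaw}, the map $\delta_{-1}\colon (U_1,U_2)\mapsto(-U_1,U_2)$ is a Lie group automorphism of $\G$ whose Jacobian has absolute value one; in particular it preserves Haar measure and fixes $\zero$, hence pushes the Dirac mass at $\zero$ forward to itself. Moreover, each horizontal field $X_j$ is sent to $-X_j$ by $(\delta_{-1})_*$, so the sub-Laplacian $\Delta_\G=\sum_{j=1}^m X_j^2$ is invariant under pullback by $\delta_{-1}$. Consequently $\Gamma\circ\delta_{-1}$ also satisfies the defining equation of the fundamental solution and vanishes at infinity, so the distribution $\Gamma-\Gamma\circ\delta_{-1}$ solves $\Delta_\G u=0$ on all of $\G$ and decays at infinity. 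By hypoellipticity of $\Delta_\G$ it is smooth, and by a Liouville-type statement for sub-Laplacians (equivalently, by uniqueness of the decaying fundamental solution in Folland's theorem) it is identically zero.

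Combining this invariance with the positive-dilation homogeneity via the relation $\delta_{-t}=\delta_t\circ\delta_{-1}$ yields $\Gamma(\delta_t p)=|t|^{2-Q}\Gamma(p)$ for every $t\neq 0$, and raising to the exponent $1/(2-Q)$ gives the full homogeneity $\|\delta_t p\|_\Gamma=|t|\|p\|_\Gamma$. The main obstacle throughout is the invariance $\Gamma\circ\delta_{-1}=\Gamma$; once it is established, every remaining condition in the definition of a strongly homogeneous norm is a formal consequence of properties of $\Gamma$ that are already on record.
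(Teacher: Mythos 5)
Your argument is correct. The one genuinely new fact needed beyond the literature is the extension of the homogeneity $\Gamma(\delta_t p)=t^{2-Q}\Gamma(p)$ to negative $t$, and your reduction of this to the single reflection identity $\Gamma\circ\delta_{-1}=\Gamma$ is sound: $\delta_{-1}$ is indeed a group automorphism with unimodular Jacobian, it sends each $X_j$ to $-X_j$ and hence commutes with $\Delta_\G=\sum_j X_j^2$, so $\Gamma\circ\delta_{-1}$ is again a fundamental solution vanishing at infinity, and uniqueness of such a solution forces $\Gamma\circ\delta_{-1}=\Gamma$; combining with positive homogeneity via $\delta_{-t}=\delta_t\circ\delta_{-1}$ gives the full statement, and your verification of the remaining norm axioms from the standard properties of $\Gamma$ is routine.

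The route differs from the paper's in organization, though the engine is the same (uniqueness of the decaying fundamental solution). The paper cites \cite[Proposition 5.4.2]{BLU} outright for the homogeneous-norm properties and then handles \emph{all} $\lambda<0$ at once by rerunning the scaling argument of \cite[Proposition 5.3.12]{BLU} with $\Gamma'(x)=|\lambda|^{Q-2}\Gamma(\delta_\lambda(x))$ and the Jacobian $|\lambda|^Q$; you instead isolate $\lambda=-1$ and obtain it from the symmetry of $\Delta_\G$ under $\delta_{-1}$. In the step-2 setting $\delta_{-1}(x,z)=(-x,z)$, so the identity $\Gamma\circ\delta_{-1}=\Gamma$ you prove is precisely the lemma the paper proves later (in the Riesz-kernel example) by the same change-of-variables computation in the distributional equation; your proposal effectively proves that lemma first and deduces strong homogeneity from it, which is a clean alternative and would let the paper reuse one symmetry statement in both places. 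The paper's version is marginally shorter for this particular lemma since it leans on the cited BLU propositions rather than re-verifying continuity and non-degeneracy; your version is more self-contained. One small caution: if you go through the Liouville-type phrasing ($u=\Gamma-\Gamma\circ\delta_{-1}$ is entire harmonic and decays), note that smoothness of $u$ across the origin comes from hypoellipticity applied to the distributional identity $\Delta_\G u=0$ on all of $\G$, as you say; invoking the uniqueness statement \cite[Proposition 5.3.10]{BLU} directly is the shorter path.
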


\begin{proof}
It is well known that $\|\cdot\|_{\Gamma}$ is a homogeneous norm, see e.g. \cite[Proposition 5.4.2]{BLU}. The strong homogeneity of $\|\cdot\|_{\Gamma}$ follows by
\begin{equation}
\label{gaugestronghomo}
\Gamma(\delta_\lambda(x)) =|\lambda|^{2-Q} \Gamma(x), \quad \forall \lambda \neq 0,
\end{equation}
which is essentially proved in \cite[Proposition 5.3.12]{BLU}. Although in \cite[Proposition 5.3.12]{BLU}, \eqref{gaugestronghomo} is only proved for $\lambda>0$, the argument works also for $\lambda<0$ by using
$$\Gamma'(x)= |\lambda|^{Q-2} \Gamma(\delta_\lambda(x))$$
in the place of $\Gamma'(x)= \lambda^{Q-2} \Gamma(\delta_\lambda(x))$ and by noting that the absolute value of the Jacobian of $\delta_\lambda$ is $|\lambda|^Q$.
\end{proof}

\item $(\G, \|\cdot\|_{K})$ where 
$\|p\|_{K}=\sqrt[4]{|x|^4+|z|^{2}}$ and
$p=(x,z), x \in \R^m, z \in \R^{\frac{Q-m}{2}}$. Clearly, $\|\cdot\|_{K}$ is a strongly homogeneous norm.

\end{itemize}


\subsection{Kernels and singular integrals on Carnot groups.} \label{sec:ker} Throughout this section we will assume that $\mathbb{G}$ is a Carnot group equipped with a strongly homogeneous norm $\|\cdot\|$. We also let $d(p,q):=\|q^{-1} \cdot p\|$ and $B(p,r)=\{q \in \G: d(p,q)<r\}$. In several instances we will restrict our attention to step-2 groups.
\begin{definition}
\label{czkernel}
Let $K: \mathbb{G}\backslash \{0\} \to \mathbb{R}^d$ be a continuous function. We say that $K$ is a $(Q-1)$-dimensional Calder\'on-Zygmund (CZ) kernel in $\mathbb{G}$ if there exist $\kappa \in (0,1),\beta \in (0,1], C_K \geq 1,$ such that:
\begin{itemize}
    \item (Growth condition) 
    \begin{equation}
    \label{czgrowth}
|K(p)| \leq C_K \frac{1}{\|p\|^{Q-1}}, \quad \forall p\in \mathbb{G}\backslash \{0\},
    \end{equation}
    \item (H\"older continuity)
    \begin{equation}
    \label{czhol}
    |K(p_1)-K(p_2)| \leq C_K \frac{\|p_2^{-1} \cdot p_1\|^\beta}{\|p_1\|^{Q-1+\beta}}, \quad \forall p_1, p_2\in \mathbb{G}\backslash \{0\} \text{ with } d(p_1, p_2) \leq \kappa \|p_1\|.
    \end{equation}
\end{itemize}
\end{definition}

\begin{remark}
    Note that if $K$ is a CZ kernel for one homogeneous norm, it is a CZ kernel for any homogeneous norm. However, the value of $\kappa$ and the implicit constants can change depending on the choice of norm.
\end{remark}

Recall that a function $f: \G  \stm \{\zero\} \to \R^d$ is called \textit{$\lambda$-homogeneous}, for $\lambda \in \R$, if
$$f(\delta_t p)=t^{\lambda} f(p).$$

The following proposition provides us with a large class of $(Q-1)$-dimensional CZ kernels.
\begin{proposition}
\label{prop:homkernels}
If $K: \G  \stm \{\zero\} \to \R^d$ is a $(1-Q)$-homogeneous $C^1$ function, then it is a $(Q-1)$-dimensional CZ kernel. 
\end{proposition}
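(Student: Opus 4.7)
The plan is to verify the growth and Hölder continuity conditions of Definition \ref{czkernel} separately, in both cases using the $(1-Q)$-homogeneity of $K$ to reduce the statement to a compact subset of $\G \setminus \{\zero\}$. I expect to obtain the Hölder estimate with the strong exponent $\beta = 1$.

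The growth bound \eqref{czgrowth} is immediate. The unit sphere $S := \{q : \|q\|=1\}$ is compact in the Euclidean topology (since $\|\cdot\|$ is Euclidean-continuous and vanishes only at $\zero$), so the continuous function $K$ is bounded on $S$ by some constant $M$. For arbitrary $p \neq \zero$, applying $(1-Q)$-homogeneity to $q := \delta_{1/\|p\|}(p) \in S$ yields $|K(p)| = \|p\|^{1-Q}|K(q)| \leq M \|p\|^{-(Q-1)}$.

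For the Hölder condition, I would fix $p_1, p_2$ with $d(p_1,p_2) \leq \kappa \|p_1\|$ for a small $\kappa \in (0,1)$ to be chosen, and rescale by $t := \|p_1\|$, writing $\tilde{p}_i := \delta_{1/t}(p_i)$. Then $\|\tilde{p}_1\|=1$ and $\|\tilde{p}_2^{-1}\cdot\tilde{p}_1\| \leq \kappa$. The relaxed triangle inequality for $\|\cdot\|$ forces $\tilde{p}_2$ into a fixed compact annulus $A \subset \G\setminus\{\zero\}$ once $\kappa$ is small. On $A$ the $C^1$ hypothesis gives the Euclidean Lipschitz estimate $|K(\tilde{p}_1) - K(\tilde{p}_2)| \lesssim |\tilde{p}_1 - \tilde{p}_2|$, where $|\cdot|$ denotes the Euclidean norm on $\R^N$. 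It therefore suffices to compare Euclidean and homogeneous distances on $A$, i.e.\ to show $|\tilde{p}_1 - \tilde{p}_2| \lesssim \|\tilde{p}_2^{-1}\cdot \tilde{p}_1\|$. Unwinding the dilation via $(1-Q)$-homogeneity
$$|K(p_1) - K(p_2)| = t^{1-Q}|K(\tilde{p}_1) - K(\tilde{p}_2)| \lesssim \frac{\|p_2^{-1}\cdot p_1\|}{\|p_1\|^Q}$$
then gives \eqref{czhol} with $\beta = 1$.

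The main (and essentially only) subtle point is this Euclidean vs.\ homogeneous comparison on $A$. Writing $\tilde{p}_i = (\tilde{x}_i, \tilde{z}_i)$, the step-$2$ group law \eqref{step2grouplaw} gives
$$\tilde{p}_2^{-1}\cdot \tilde{p}_1 = \bigl(\tilde{x}_1 - \tilde{x}_2,\ \tilde{z}_1 - \tilde{z}_2 - \tfrac12[\tilde{x}_2, \tilde{x}_1]\bigr),$$
and the equivalence of homogeneous norms (compare with $\|\cdot\|_K$, for instance) controls both coordinates: $|\tilde{x}_1 - \tilde{x}_2| \lesssim \|\tilde{p}_2^{-1}\cdot \tilde{p}_1\|$ and $|\tilde{z}_1 - \tilde{z}_2 - \tfrac12[\tilde{x}_2, \tilde{x}_1]| \lesssim \|\tilde{p}_2^{-1}\cdot \tilde{p}_1\|^2$. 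A naive estimate would lose a square root when passing to $|\tilde{z}_1 - \tilde{z}_2|$. The saving trick is to rewrite the commutator as $[\tilde{x}_2, \tilde{x}_1] = [\tilde{x}_2 - \tilde{x}_1, \tilde{x}_1]$ and use boundedness of $|\tilde{x}_1|$ on $A$ to conclude $|[\tilde{x}_2,\tilde{x}_1]| \lesssim \|\tilde{p}_2^{-1}\cdot\tilde{p}_1\|$; combining gives $|\tilde{z}_1 - \tilde{z}_2| \lesssim \|\tilde{p}_2^{-1}\cdot \tilde{p}_1\|$ as well, which is precisely what permits $\beta = 1$.
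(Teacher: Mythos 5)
Your proof is correct, including the stronger exponent $\beta=1$, but it takes a genuinely different route from the paper. The paper dispatches the H\"older bound in one stroke by citing the Folland--Stein mean value inequality \cite[Proposition 1.7]{stfol} for $C^1$ homogeneous functions, which directly gives $|K(p\cdot q)-K(p)|\lesssim \|q\|\,\|p\|^{-Q}$ for $\|q\|\le \|p\|/2$; it then only remains to replace $\|p_1^{-1}\cdot p_2\|$ by $\|p_2^{-1}\cdot p_1\|$ via the comparability of the norms of a point and its inverse (\cite[Corollary 5.1.5]{BLU}), yielding \eqref{czhol} with $\beta=1$ and $\kappa=1/(2B)$. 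You instead argue self-containedly: dilate so that $\|p_1\|=1$, confine both points to a compact annulus (where the $C^1$ hypothesis gives a Euclidean Lipschitz bound), and compare Euclidean and homogeneous distances at unit scale through the explicit step-2 group law; your rewriting $[\tilde x_2,\tilde x_1]=[\tilde x_2-\tilde x_1,\tilde x_1]$, which avoids losing a square root, is the same algebraic trick the paper itself uses (for the adjoint kernel) in the proof of Lemma \ref{l:standardHolder}, and it succeeds here without loss precisely because $|\tilde x_1|$ is bounded on the annulus and the comparison is only needed at unit scale, the homogeneity of $K$ handling the rescaling. The paper's route buys step-independence: the same citation proves the proposition in Carnot groups of arbitrary step, which is exploited in Remark \ref{grieszremk}, whereas your coordinate computation is tied to the step-2 formula \eqref{step2grouplaw}. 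Your route buys transparency: it avoids the external mean value inequality and makes the mechanism (homogeneity, compactness, and a quantitative Euclidean-versus-homogeneous comparison on an annulus) completely explicit.
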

\begin{proof} To verify \eqref{czgrowth} let $p \in \G \stm \{0\}$ and note that
$$K(\delta_{\|p\|}(\delta_{1/\|p\|}(p)))=\|p\|^{1-Q}K(\delta_{1/\|p\|}(p)).$$
Now, \eqref{czgrowth}  follows because by the continuity of $K$ and $\|\cdot\|$,
$$\sup_{q \in \G: \|q\|=1} |K(q)|<\infty.$$

For \eqref{czhol} we first record that by \cite[Proposition 1.7]{stfol} there exists some constant $A>0$ such that
\begin{equation}
\label{homeq1}
|K(p\cdot q)-K(p)|\leq A \|p\| \|q\|^{-Q}
\end{equation}
for $\|q\| \leq \|p\|/2$. By \cite[Corollary 5.1.5]{BLU} there exists some $B\geq 1$ such that
$$B^{-1}\|p \|\leq \|p^{-1}\| \leq B\|p\|\quad\mbox{ for all }p \in \G.$$
Let $z_1, z_2 \in \G$ such that 
\begin{equation}
\label{holquot}
\|z_2^{-1} \cdot z_1\| \leq \frac{1}{2B} \|z_1\|.
\end{equation}
Then, for such $z_1,z_2$,
$$\|z_1^{-1}\cdot z_2\| \leq  B\|z_2^{-1} \cdot z_1\| \leq \frac{\|z_1\|}{2}.$$
Therefore, if $z_1,z_2$ satisfy \eqref{holquot} then
$$|K(z_2)-K(z_1)|=|K(z_1 \cdot z_1^{-1} \cdot z_2)-K(z_1)| \overset{\eqref{homeq1}}{\leq}A \frac{\|z_1^{-1} \cdot z_2\|}{\|z_1\|^Q} \leq A B \frac{\|z_2^{-1} \cdot z_1\|}{\|z_1\|^Q}.$$
Thus, $K$ satisfies \eqref{czhol} with $\beta=1$ and $\theta=\frac{1}{2B}$.
\end{proof}
Let $\mu$ be positive Radon measure  in $\G$. The measure $\mu$ is called \textit{$(Q-1)$-Ahlfors-David regular} (in short, $(Q-1)$-ADR) if
\begin{equation}
\label{eq:ADR}
C^{-1}r^{Q-1} \leq \mu(B(p,r)) \leq Cr^{Q-1}, \quad \forall p\in \supp \mu, 0<r \leq \diam(\supp \mu),
\end{equation}
for some constant $C\geq 1$. If only the right-hand side inequality in \eqref{eq:ADR} holds, we will say that $\mu$ is $(Q-1)$-upper regular.

Fix a $(Q-1)$-dimensional CZ kernel $K$ and a positive $(Q-1)$--upper regular measure. For $f\in L^2(\mu)$ and $\epsilon>0$, we define
$$
T_{\mu, \epsilon}f(p) = \int_{\|q^{-1}\cdot p\| >\epsilon} K(q^{-1}\cdot p)f(q)\,d\mu(q), \quad p\in \mathbb{G},
$$
where the expression on the right makes sense, due to the growth conditions of $K$ and $\mu$ and the Cauchy-Schwartz inequality. If moreover $\mu$ is finite, we let
$$T_{\epsilon}\mu(p) = \int_{\|q^{-1}\cdot p\| >\epsilon} K(q^{-1}\cdot p)\,d\mu(q), \quad p\in \mathbb{G}.
$$
\begin{definition}\label{SIO}
Given a $(Q-1)$-dimensional CZ kernel $K$ and a positive $(Q-1)$--upper regular measure, we say that the singular integral operator (SIO) $T$ associated to $K$ is bounded on $L^2(\mu)$, if the operators
$$
f\mapsto T_{\mu, \epsilon}f
$$
are bounded on $L^2(\mu)$ with constants independent of $\epsilon>0$.
\end{definition}

\begin{remark}
\label{adjointremark} Let $\mu$ be a positive $(Q-1)$--upper regular measure. If $K$ is a $(Q-1)$-dimensional CZ kernel, then the kernel $K^{\ast}$, defined by $K^{\ast}(p) := K(p^{-1})$, is the kernel of the formal adjoint $T_{\mu,\epsilon}^{\ast}$ of $T_{\mu,\epsilon}$, see \cite[Remark 2.3]{CFO19} for the details. It follows by \cite[Corollary 5.1.5]{BLU} that $K^{\ast}$ satisfies the growth condition \eqref{czgrowth} (with a different constant if $\|\cdot \|$ is not symmetric). Moreover, Lemma \ref{l:standardHolder}  implies that $K^{\ast}$ satisfies the H\"older condition \eqref{czhol}  with exponent $\beta/2$. We also record that if $f,g \in L^2(\mu)$:
\begin{align}
\label{eq:adjoint}\int (T_{\mu,\epsilon}f) g \, d\mu = \int (T^{\ast}_{\mu,\epsilon}g)f \, d\mu. \end{align}
\end{remark}

The defining conditions of $(Q-1)$-dimensional CZ kernels are not strong enough to guarantee $L^2(\mu)$-boundedness for the corresponding SIO, even when the measure $\mu$ is flat (i.e. when $\mu$ is the $(Q-1)$-dimensional Hausdorff measure restricted on a vertical hyperplane $\mathbb{W}$). Imposing the following mild cancellation condition will be sufficient for our purposes. 

For the following definition we will denote by $\mathcal{L}^k$ the $k$-dimensional Lebesgue measure. We also let
$$
\psi^r(p):= \psi (\delta_r(p))
$$
for $p \in \G$ and $r>0$.
\begin{definition}\label{AB} Let $\G$ be Carnot group. A continuous function $K: \mathbb{G}\backslash \{0\} \to \mathbb{R}$ satisfies the \textit{annular boundedness condition} (AB), if for every $\|\cdot \|$-radial $C^\infty$ function $\psi: \mathbb{G} \to \mathbb{R}$ satisfying $\chi_{B(0,1/2)} \leq \psi \leq \chi_{B(0,2)}$, there exists a constant $A_\psi \geq 1$ such that
$$
\Big| \int_{\mathbb{W}} \big(\psi^R(w)-\psi^r(w)\big)K(w)\,d\mathcal{L}^{N-1}(w)\Big| \leq A_\psi,
$$
for all $0<r<R<\infty$, and for all vertical hyperplanes $\mathbb{W}$.
\end{definition}
Annular boundedness is a broadly encompassing cancellation condition, which includes \textit{antisymmetric} and \textit{dilation antisymmetric} kernels. A function $K: \G \stm \{\zero\} \to \R^d$ is called antisymmetric if 
$$K(p^{-1})=-K(p),\quad \mbox{ for all }p \in \G \stm \{\zero\},$$
and  is called dilation antisymmetric if
$$K(\delta_{-1}(p))=-K(p),\quad \mbox{ for all }p \in \G \stm \{\zero\}.$$ Note that in Carnot groups of step $2$ the notion of dilation antisymmetric kernels coincides with horizontal antisymmetry as in \cite{CFO19} (or $\mathbb{H}$-oddness as in \cite{CLYRiesz}).
\begin{lemma}\label{antisymmetry}
Let $\G$ be a Carnot group and let $K:(\G,\|\cdot\|) \stm \{\zero\} \to \R^d$ be a continuous function.
\begin{enumerate}
\item \label{eq:khorantAB} If $K$
 is dilation
 antisymmetric then it satisfies the AB condition.
\item \label{eq:kantAB}If $\|\cdot \|$ is symmetric and $K$ is antisymmetric then $K$ satisfies the AB condition.
\end{enumerate}
\end{lemma}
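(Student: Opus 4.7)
The plan is to show that in both cases the integral in the AB condition actually vanishes identically for every $0<r<R$ and every vertical hyperplane $\mathbb{W}$, which is strictly stronger than the asserted boundedness. The argument rests on two symmetry observations: first, that since $\|\cdot\|$ is strongly homogeneous, $\phi := \psi^{R} - \psi^{r}$ inherits an evenness property from its radial structure; and second, that a vertical hyperplane is a $1$-codimensional subgroup, hence $\mathbb{W} = \pi^{-1}(Y)$ for some codimension-$1$ linear subspace $Y \subset \R^{n_{1}}$, so in particular $\zero \in \mathbb{W}$ and $\mathbb{W}$ is closed under the involutions I want to use.

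For part (1), I would first note that in a step-$2$ group $\delta_{-1}(x,z)=(-x,z)$, and since $\|\delta_{-1}(p)\| = \|p\|$ by strong homogeneity, every $\|\cdot\|$-radial function (in particular $\phi$) satisfies $\phi(-x,z)=\phi(x,z)$. The linear map $\sigma(x,z):=(-x,z)$ preserves $\mathbb{W}$ (because $Y = -Y$) and has Jacobian determinant $(-1)^{n_{1}-1}$, hence preserves $\mathcal{L}^{N-1}|_{\mathbb{W}}$. Changing variables by $\sigma$ and invoking the horizontal antisymmetry $K(\sigma w) = -K(w)$ gives
\begin{equation*}
\int_{\mathbb{W}}\phi(w)K(w)\,d\mathcal{L}^{N-1}(w)=\int_{\mathbb{W}}\phi(\sigma w)K(\sigma w)\,d\mathcal{L}^{N-1}(w) = -\int_{\mathbb{W}}\phi(w)K(w)\,d\mathcal{L}^{N-1}(w),
\end{equation*}
forcing the integral to be zero.

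For part (2), I would run the structurally identical argument with the inversion map $\iota(x,z)=(-x,-z)=(x,z)^{-1}$ in place of $\sigma$. Symmetry of $\|\cdot\|$ gives $\phi(\iota w) = \phi(w)$; antisymmetry of $K$ gives $K(\iota w) = -K(w)$; the fact that $\mathbb{W}$ is a subgroup gives $\iota(\mathbb{W})=\mathbb{W}$; and $\iota$ is linear on $\mathbb{W}$ with Jacobian determinant $(-1)^{N-1}$, hence $\mathcal{L}^{N-1}$-preserving. The same change-of-variables computation yields that the integral equals its own negative, and hence vanishes.

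In both cases we may therefore take $A_{\psi}=0$, so there is no real obstacle here beyond checking the two elementary facts that a $\|\cdot\|$-radial function on a step-$2$ Carnot group is even in the horizontal variable (when $\|\cdot\|$ is strongly homogeneous) and even under inversion (when $\|\cdot\|$ is symmetric); both follow immediately from the defining properties of $\|\cdot\|$ together with the step-$2$ identity $\delta_{-1}(x,z)=(-x,z)$. The only place where one must be slightly careful is in confirming that $\mathbb{W}$, being a hyperplane in the paper's sense, is an honest subgroup through $\zero$ (so that the involutions above land back in $\mathbb{W}$); once this is in hand, the rest is a one-line change of variables.
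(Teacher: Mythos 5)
Your proof is correct and follows essentially the same route as the paper: the paper also shows the AB integral vanishes identically via the substitution $(x,z)\mapsto(-x,z)$ on $\mathbb{W}$ (presented as a symmetrization of the integrand), relying on the fact that a $\|\cdot\|$-radial function is even in the horizontal variable by strong homogeneity, and it dispatches part (2) as ``similar,'' which is exactly your inversion argument. Your explicit checks that $\mathbb{W}$ is preserved by the involutions and that they are measure-preserving are the same facts the paper uses implicitly, so there is nothing to add.
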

\begin{proof}
We will only prove \eqref{eq:khorantAB}; the  proof of \eqref{eq:kantAB} is similar. Let $K:\G \stm \{\zero\} \to \R^d$ be a continuous dilation antisymmetric function. Since $\|\cdot\|$ is strongly homogeneous, $\|\delta_{-1}(p)\|=\|p\|$.  Fix a vertical hyperplane $\mathbb{W}$
and a radial function $\psi$ as in Definition \ref{AB}. Then, since $\delta_{-1}$ preserves Lebesgue measure on $\mathbb{W}$:
\begin{align*}
&\int_{\mathbb{W}} (\psi^R(p)-\psi^r(p))K(p)\,d\mathcal{L}^{N-1}(p)\\
&\quad\quad=\frac{1}{2}\int_{\mathbb{W}} \Big((\psi^R(p)-\psi^r(p))K(p) +(\psi^R(\delta_{-1}(p))-\psi^r(\delta_{-1}(p)))K(\delta_{-1}(p))\Big)\,d\mathcal{L}^{N-1}(p)\\
&\quad\quad=\frac{1}{2}\int_{\mathbb{W}} \Big((\psi^R(p)-\psi^r(p))K(p) +(\psi^R(p)-\psi^r(p))K(\delta_{-1}(p))\Big)\,d\mathcal{L}^{N-1}(p)=0.
\end{align*}
\end{proof}
We will now discuss two families of $(Q-1)$-dimensional CZ kernels where our theorems apply. 

\begin{remark}
\label{rem:ashasadj} Let $\G$ be Carnot group and $K$ be a $(Q-1)$-dimensional CZ kernel. If $K$ is antisymmetric or horizontally antisymmetric then $K^\ast$ (recall Remark \ref{adjointremark}) is, respectively antisymmetric or dilation antisymmetric.

Assume, for example, that $K$ is horizontally antisymmetric. As $\delta_{-1}$ is a homomorphism, we have that $\delta_{-1}(p^{-1}) \delta_{-1}(p) = \delta_{-1}(0) = 0$ so $\delta_{-1}(p)^{-1} = \delta_{-1}(p^{-1})$. Then
$$K^\ast (\delta_{-1}(p))=K(\delta_{-1}(p^{-1}))=-K(p^{-1})=-K^\ast(p).$$
The claim follows similarly for the case when $K$ is antisymmetric.
\end{remark}
\begin{example}\label{griesz}[The $(Q-1)$-dimensional $\G$-Riesz kernels] Let $\G$ be a Carnot group of homogeneous dimension $Q$. 
The \emph{$(Q-1)$-dimensional $\G$-Riesz kernel}. $\mathsf{R}:\G \stm \{\zero\} \to \R^m$ is defined as
\begin{equation}\label{rieszKernel} \mathsf{R}(p) = \nabla_{\G} \Gamma (p), \qquad p \in \G \stm \{\zero\}, \end{equation}
where $\Gamma$ is the fundamental solution of the sub-Laplacian. 

\begin{lemma}
\label{lem:rieszarecz}
The $(Q-1)$-dimensional $\G$-Riesz kernel is a $(Q-1)$-dimensional Calder\'on-Zygmund kernel.
\end{lemma} 
\begin{proof}
By \cite[Proposition 5.3.12]{BLU} we know that $\Gamma$ is $(2-Q)$-homogeneous.  Arguing exactly as in \cite[Lemma 4.1]{CLYRiesz} we derive that $\mathsf{R}$ is $(1-Q)$-homogeneous. Consequently, Proposition \ref{prop:homkernels} implies that $\mathsf{R}$ is a $(Q-1)$-dimensional Calder\'on-Zygmund kernel. \end{proof}


We will now show that $\mathsf{R}$ is a dilation antisymmetric kernel. To this end, we need the following simple lemma. 

\begin{lemma} For all $p \in \G \stm \{0\}$,
$$\Gamma(\delta_{-1}(p))=\Gamma(p).$$
\end{lemma}

\begin{proof}
By the definition of the fundamental solution $\Gamma$, \cite[Definition 5.3.1]{BLU}, we deduce that $\widetilde \Gamma(p)=\Gamma(\delta_{-1}(p))\in C^\infty(\mathbb{R}^N\backslash\{0\}) \cap L^1_{\text{loc}}(\mathbb{R}^N)$. We have $\widetilde \Gamma(p)\to 0$ as $p\to \infty$. Moreover, for every test function $\phi\in C_c^\infty(\mathbb{R}^N)$, by a change of variables,
\begin{equation*}
\begin{split}
\int_{\mathbb{R}^N} \widetilde \Gamma(p) \Delta_G\phi(p)\,dp&=\int_{\mathbb{R}^N} \Gamma(\delta_{-1}(p)) \Delta_\G\phi(p)\,dp\\
&=\int_{\mathbb{R}^N}\Gamma(p) (\Delta_\G\phi)(\delta_{-1}(p))\,dp\\
&= (-1)^2\int_{\mathbb{R}^N} \Gamma(p) \Delta_\G\Big(\phi(\delta_{-1}(p))\Big)\,dp\\
&=-\phi(\delta_{-1}(0))\\
&=-\phi(0).
\end{split}
\end{equation*}
Therefore, $\widetilde \Gamma$ is a fundamental solution of $\nabla_\mathbb{G}$. By the uniqueness of the fundamental solution, see \cite[Proposition 5.3.10]{BLU}, $\widetilde \Gamma =\Gamma$. 
\end{proof}

\begin{corollary}
The \emph{$(Q-1)$-dimensional $\G$-Riesz kernel} is dilation antisymmetric.
\end{corollary}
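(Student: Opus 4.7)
The plan is to combine the symmetry $\Gamma(-x,z) = \Gamma(x,z)$ just established with the transformation rule of horizontal left-invariant vector fields under the dilation $\delta_{-1}$. In a step-$2$ Carnot group the first layer is scaled by $-1$ and the second by $(-1)^2 = 1$, so $\delta_{-1}(x,z) = (-x,z)$. Consequently, proving horizontal antisymmetry of $\mathsf{R}$ reduces to verifying the identity $(X_j\Gamma)\circ \delta_{-1} = -(X_j\Gamma)$ for each horizontal basis vector $X_j$.

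The key observation is how the $X_j \in \fv_1$ behave under $\delta_{-1}$. Since $\delta_{-1}$ is a Lie group automorphism of $\G$ whose differential at the identity acts as multiplication by $-1$ on $\fv_1$, and since $(\delta_{-1})_* X_j$ is again left-invariant, we get $(\delta_{-1})_* X_j = -X_j$. Spelled out, this says
\begin{equation*}
X_j(f \circ \delta_{-1})(p) = -\,(X_j f)(\delta_{-1}p)
\end{equation*}
for every smooth $f$ and every $p \in \G$. Applying this with $f = \Gamma$ and using $\Gamma \circ \delta_{-1} = \Gamma$ from the preceding lemma yields
\begin{equation*}
(X_j\Gamma)(p) \;=\; X_j(\Gamma \circ \delta_{-1})(p) \;=\; -(X_j\Gamma)(\delta_{-1}p) \;=\; -(X_j\Gamma)(-x,z),
\end{equation*}
so each component of $\mathsf{R}$ is odd under $(x,z)\mapsto(-x,z)$, and hence $\mathsf{R}$ is horizontally antisymmetric.

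I do not anticipate any real obstacle; the only ingredient beyond the preceding lemma is the elementary fact that left-invariant vector fields in $\fv_1$ are odd under the pushforward by $\delta_{-1}$. As a sanity check, one can redo the computation in exponential coordinates: writing $X_j = \partial_{x_j} + \sum_k a_{jk}(x)\partial_{z_k}$ with each $a_{jk}$ linear in $x$ (and therefore odd), the evenness of $\Gamma$ in $x$ makes both $\partial_{x_j}\Gamma$ and $a_{jk}(x)\partial_{z_k}\Gamma$ odd in $x$, recovering the same conclusion.
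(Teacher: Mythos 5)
Your proof is correct and is essentially the paper's argument: the paper verifies the same identity $(X_j\Gamma)(p)=-(X_j\Gamma)(\delta_{-1}(p))$ by writing $W\Gamma$ as a difference quotient and using $\Gamma(p\cdot e^{hW})=\Gamma(\delta_{-1}(p)\cdot e^{-hW})$, which is precisely your statement that the automorphism $\delta_{-1}$ pushes horizontal left-invariant fields to their negatives while fixing $\Gamma$ (by the preceding lemma). The coordinate sanity check is consistent with the explicit formula for $X_l$ derived in Section 3 of the paper.
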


\begin{proof}
It suffices to show $-(\nabla_\G \Gamma)(\delta_{-1}(p))=\nabla_\G \Gamma(p)$. For any fixed horizontal vector field $W$, we have
\begin{align*}
W\Gamma(p) &= \lim_{h\to 0} \frac{\Gamma\big(p\cdot e^{hW}\big) -\Gamma(p)}{h}\\
& = -\lim_{h\to 0} \frac{\Gamma\big(\delta_{-1}(p)\cdot e^{-hW}\big) -\Gamma(\delta_{-1}(p))}{-h}\\
&= -(W\Gamma)(\delta_{-1}(p)).
\end{align*}
Therefore, $-(\nabla_\G \Gamma)(\delta_{-1}(p))=\nabla_\G \Gamma(p)$.
\end{proof}

\end{example}
\begin{example} [The $(Q-1)$-dimensional pseudo-Riesz kernels in $\G$] 
Let $\G$ be a Carnot group of step $s$. Let $\mathsf{P}:\G \stm \{\zero\} \to \R^{N}$ defined by
$$\mathsf{P}(p)=\left(\frac{p_1}{\|p\|^{Q}}, \frac{p_2}{\|p\|^{Q+1}}, \dots, \frac{p_s}{\|p\|^{Q+s-1}} \right), \quad \mbox{ for all }p=(p_1,\dots, p_s) \in \G \stm \{0\}.$$
This kernel resembles in form the one codimensional Riesz kernel in $\R^n$, i.e. the kernel $x|x|^{-n}$. Clearly, the kernel $\mathsf{P}$ is $(1-Q)$-homogeneous and Proposition \ref{prop:homkernels} implies that $\mathsf{P}$ is a $(Q-1)$-dimensional Calder\'on-Zygmund kernel. Moreover, if the norm $\|\cdot\|$ is symmetric the kernel $\mathsf{P}$ is antisymmetric and Proposition \ref{antisymmetry} \eqref{eq:kantAB} implies that $\mathsf{P}$ satisfies the $AB$ condition.
\end{example}

The following lemma will be applied in Section \ref{4} and it concerns Carnot groups of step $2$.
\begin{lemma}\label{l:standardHolder} Let $\G$ be Carnot group of step $2$. Assume $K \colon \mathbb{G} \setminus \{\mathbf{0}\} \to \R^{d}$ is a $(Q-1)$-dimensional CZ kernel which satisfies \eqref{czhol} for some $\beta$ and $\kappa$. Then, there exists some $C \geq 1$ depending on $\kappa$ and $\beta$ so that
\begin{equation}\label{form42} |K(q^{-1} \cdot p_{1}) - K(q^{-1} \cdot p_{2})| + |K(p_{1}^{-1} \cdot q) - K(p_{2}^{-1} \cdot q)| \lesssim \frac{\|p_{2}^{-1} \cdot p_{1}\|^{\beta/2}}{\|q^{-1}\cdot p_{1}\|^{Q-1 + \beta/2}} \end{equation}
for $q\in \mathbb{G}$, $p_{1},p_{2} \in \mathbb{G} \setminus\{q\}$ with $d(p_{1},p_{2}) \leq d(p_{1},q)/C$.
\end{lemma}

\begin{proof} Write $w_{1} := q^{-1} \cdot p_{1}$ and $w_{2} := q^{-1} \cdot p_{2}$. Then by left-invariance of $d$ and taking $C \geq 1/\kappa$, we get
$$d(w_{1},w_{2}) \leq \|w_1\|/C \leq \kappa\|w_{1}\|,$$
so the first summand in \eqref{form42} has the correct bound by the H\"older continuity in Definition \ref{czkernel}, even with $\beta/2$ replaced by $\beta$. Hence, to find a bound for the second summand, we only need to prove that
\begin{displaymath} |K(w_{1}^{-1}) - K(w_{2}^{-1})| \lesssim \frac{\|w_{2}^{-1} \cdot w_{1}\|^{\beta/2}}{\|w_{1}\|^{Q-1 + \beta/2}}. \end{displaymath}
We would like to apply the H\"older continuity in Definition \ref{czkernel} as follows,
\begin{equation}\label{form43} |K(w_{1}^{-1}) - K(w_{2}^{-1})| \lesssim \frac{\|w_{2} \cdot w_{1}^{-1}\|^{\beta}}{\|w_{1}\|^{Q-1 + \beta}}, \end{equation}
but we first need to make sure that $d(w_{1}^{-1},w_{2}^{-1}) \leq \kappa \|w_{1}\|$. Write $w_{1} = (x_{1},z_{1})$ and $w_{2} = (x_{2},z_{2})$. By the Baker-Campbell-Hausdorff formula, there exists a skew-symmetric bilinear map $\omega$ such that $(x, z)\cdot (x', z') = (x+x', z + z'+\omega(x,x'))$. Hence,
\begin{align} 
d(w_{1}^{-1},w_{2}^{-1}) &= \|w_{2} \cdot w_{1}^{-1}\|  = \|(x_{2} - x_{1}, z_{2} - z_{1} + \omega(x_{1}, x_{2})\| \notag \\
& \lesssim \|(x_{2} - x_{1},z_{2} - z_{1} - \omega(x_{1}, x_{2})\|+ \sqrt{|\omega(x_1, x_2)|} \notag \\
& = d(w_{1},w_{2}) + \sqrt{|\omega(x_1, x_2-x_1)|} \notag\\
& \lesssim d(w_{1},w_{2}) + \sqrt{d(w_{1},w_{2})}\sqrt{\|w_{1}\|} \label{form44}, 
\end{align}
where the last inequality is due to Cauchy-Schwartz inequality. It follows that $d(w_{1}^{-1},w_{2}^{-1}) \leq \kappa \|w_{1}\|$, if the constant $C$ is chosen large enough. Hence, the estimate \eqref{form43} is legitimate, and we may further use \eqref{form44} to obtain
\begin{displaymath} |K(w_{1}^{-1}) - K(w_{2}^{-1})| \lesssim \frac{\|w_{2}^{-1} \cdot w_{1}\|^{\beta}}{\|w_{1}\|^{Q-1 + \beta}} + \frac{\|w_{2}^{-1} \cdot w_{1}\|^{\beta/2}\|w_{1}\|^{\beta/2}}{\|w_{1}\|^{Q-1 + \beta}} \lesssim \frac{\|w_{2}^{-1} \cdot w_{1}\|^{\beta/2}}{\|w_{1}\|^{Q-1 + \beta/2}}, \end{displaymath}
as claimed. \end{proof}

\subsection{Intrinsic graphs}
\label{sec:ilgs}
In this section, we will be concerned with step-2 Carnot groups equipped with a homogeneous norm $\|\cdot\|$.
Intrinsic Lipschitz graphs were introduced in \cite{FSS1} as a model for low codimensional rectifiability in the Carnot group setting. Let $v \in \G$ be a horizontal vector and let $\bV$ be the subgroup it spans: $\{\delta_t(v): t \in \R\}$. We then let $\bW$ be the orthogonal complement of $v$ in $\G$. One can see that this is a vertical subgroup. Every $p \in \G$ can be uniquely written as a product $p_\bW p_\bV$. Thus, we naturally obtain the projection maps $\pi_\bV(p) = p_\bV$ and $\pi_\bW(p)=p_{\bW}$.

Given a function $\phi : \bW \to \R$, we can define its {\em intrinsic graph} as
\begin{align*}
  \Sigma(\phi) := \{ g \delta_{\phi(g)}(v) : g \in \bW \}.
\end{align*}
By abuse of notation, we will also interpret $\Sigma(\phi)$ as a function on $\bW$
$$\Sigma(\phi)(g):= g \delta_{\phi(g)}(v)$$
which parameterizes the graph. A function $\phi : \bW \to \R$ is said to be {\em linear} if there is a linear functional $T \in (\R^{n_1-1})^*$ so that $\phi(x) = T(\pi(x))$ for all $x \in \bW$. As this is a 1-1 correspondence, for any linear function $\phi : \bW \to \R$, we write $L_\phi \in (\R^{n_1-1})^*$ for the corresponding functional. Similarly, $\phi$ is {\em affine} if $T$ is affine. Then vertical hyperplanes are precisely the intrinsic graphs of affine functions.

Note that left translations of intrinsic graphs are still intrinsic graphs. Thus, given a $\phi : \bW \to \R$ and a $p \in \G$, we let $\phi^p : \bW \to \R$ be the function so that
\begin{align*}
  \Sigma(\phi^p) = p\Sigma(\phi).
\end{align*}
Given a $0 < L < 1$ and a homogeneous norm $\|\cdot\|$ on $\G$, we can define a double cone by
\begin{align*}
  C_L = \{ p \in \G : \|p\| < L^{-1} \|\pi_\bV(p)\|\}.
\end{align*}
We have that $C_L$ is decreasing as $L \to 1$. We now define an {\em $L$-intrinsic Lipschitz graph} to be an intrinsic graph $\Sigma = \Sigma_\phi$ for which
\begin{align*}
  p C_L \cap \Sigma = \{0\}, \qquad \forall p \in \Sigma.
\end{align*}
Any function $\phi : \bW \to \R$ that has an intrinsic Lipschitz graph is said to be intrinsic Lipschitz.

\begin{definition}
  A function $\phi : \bW \to \R$ with $\phi(\0) = 0$ is {\em intrinsically differentiable at \0} if there is some linear function $T : \bW \to \R$ so that
  \begin{align*}
    \lim_{g \to \0} \frac{|\phi(g) - T(g)|}{\|g\|} = 0.
  \end{align*}
  The {\em intrinsic differential} of $\phi$ is $d\phi_\0 = L_T$.

  A function $\phi : \bW \to \R$ is {\em intrinsically differentiable at $p \in \bW$} if, setting $g = p \delta_{\phi(p)}(v)$ and $\psi = \phi^{(g^{-1})}$, we have that $\psi$ is differentiable at \0. We then define the {\em intrinsic differential} of $\phi$ at $p$ as $d\phi_p = d\psi_\0$.
\end{definition}

It was proven in\cite{FranchiMarchiSerapioni14} that in Carnot groups of type $\star$, which include Carnot groups of step $2$, intrinsic Lipschitz functions are intrinsically differentiable at almost every $p \in \bW$.

We now metrize $(\R^{n_1-1})^*$ with the usual norm $\|\cdot\|_{\text{op}}$.

\begin{definition}
  A function $\phi : \bW \to \R$ is an {\em intrinsic $C^1(\bW)$ function} if $d\phi$ exists at every point $p \in \bW$ and is continuous. If $\phi$ is $C^1$, then so is $\phi^p$ for any $p \in \G$.

  For $\alpha \in [0,1]$, we now define the intrinsic $C^{1,\alpha}(\bW)$ functions as the functions of $C^1(\bW)$ for which there is a $H > 0$ so that
  \begin{align}
  \label{eq:c1alpha}
    \|d\phi^{(p^{-1})}_w - d\phi^{(p^{-1})}_\0\|_{\text{op}} \leq H \|w\|^\alpha, \qquad \forall p \in \Sigma(\phi), w \in \bW.
  \end{align}
\end{definition}

We can then define the {\em intrinsic gradient} as the $\nabla^\phi \phi(p) \in \R^{n_1-1}$ so that $d\phi^{(p^{-1})}_\0(v) = \nabla^\phi \phi(p) \cdot \pi(v)$.

\begin{remark}
\label{remk:c1alip}If $\phi \in C^1(\bW)$ and $\nabla^\phi \phi \in L^\infty$, then $\phi \in C^{1,0}(\bW)$. It is then known that $\phi$ is also intrinsic Lipschitz. This fact is essentially \cite[Lemma 2.22]{CFO19} whose proof for the Heisenberg group works more broadly for step-2 Carnot groups once we replace the use of their Proposition 2.23 with our Proposition \ref{holder}. In particular, if $\phi \in C^1(\bW)$ satisfies the conditions of \eqref{infinity}, then $\phi$ is intrinsic Lipschitz.
\end{remark}

We will also need the following area formula for functions $\phi\in C^1(\mathbb{W})$ which was recently obtained by Di Donato.
\begin{lemma}\cite[Proposition 5.10]{DiDonato}
Let $G=(\R^N, \cdot)$ be a step-2 Carnot group with homogeneous dimension $Q$. There exists a left-invariant homogeneous distance $d_{1}$ on $\mathbb{G}$ such that the associated spherical Hausdorff measure $\mathcal{S}^{Q-1} = \mathcal{S}^{Q-1}_{d_{1}}$ satisfies
\begin{equation}\label{eq:area_form}
\int_{\Sigma} h \;d\mathcal{S}^{Q-1}= \int_{\mathbb{W}} (h \circ \phi) \sqrt{1+|\nabla^{\phi}\phi|^2}\, d\mathcal{L}^{N-1},
\end{equation}
for every vertical hyperplane $\bW$, every $\phi \in C^{1}(\mathbb{W})$ with intrinsic graph $\Sigma$, and every $h\in L^1(\mathcal{S}^{Q-1}|_{\Sigma})$.
\end{lemma}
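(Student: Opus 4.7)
The plan is to parameterize $\Sigma$ by the graph map $\Phi\colon\mathbb{W}\to\Sigma$, $\Phi(w):=w\cdot\delta_{\phi(w)}(v)$, and to identify the density of $\mathcal{S}^{Q-1}|_{\Sigma}$ along the image of $\Phi$. The argument splits into a calibration of the distance $d_{1}$, a pointwise density computation via blow-ups, and a standard globalization.

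First, I would choose $d_{1}$ so that for every vertical plane $\mathbb{V}$ in $\mathbb{G}$ one has $\mathcal{S}^{Q-1}_{d_{1}}|_{\mathbb{V}}=\mathcal{L}^{N-1}|_{\mathbb{V}}$. This is where the step-$2$ hypothesis is essential: one needs a single left-invariant homogeneous distance whose spherical Hausdorff measure is simultaneously normalized on every vertical plane. The existence of such a $d_{1}$ is the main non-trivial input; natural candidates such as the Kor\'anyi or Carnot--Carath\'eodory distances do not generally enjoy this property, and the calibration relies on building $d_{1}$ from the group structure in a way that balances the second-layer contributions across all vertical directions.

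Next, I would compute the density of $\mathcal{S}^{Q-1}|_{\Sigma}$ at each point $p_{0}=\Phi(w_{0})\in\Sigma$. By the definition of intrinsic $C^{1}$-regularity, the rescaled surfaces $\delta_{1/r}(p_{0}^{-1}\cdot\Sigma)$ converge locally uniformly, as $r\downarrow 0$, to the vertical plane $\mathbb{W}_{0}$ determined by $d\phi_{w_{0}}$. Combined with the calibration from the previous step, a standard blow-up argument shows that $\mathcal{S}^{Q-1}|_{\Sigma}$ and $\Phi_{\#}\mathcal{L}^{N-1}$ are mutually absolutely continuous, and it remains to identify the Radon--Nikodym derivative $J(w)$. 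Tracing through the blow-up, $J(w)$ is the infinitesimal ratio between the $\mathcal{L}^{N-1}$-volume element on $\mathbb{W}_{0}$ and the $\mathcal{L}^{N-1}$-volume element on $\mathbb{W}$. In step $2$, the Baker--Campbell--Hausdorff formula \eqref{BCHformula2} makes the graph shear preserve the second-layer coordinates, so $J(w)$ collapses to the Euclidean area factor of $\mathbb{W}_{0}$ viewed as an affine graph over $\mathbb{W}$ in the direction $v$, namely the Pythagorean quantity $\sqrt{1+|\nabla^{\phi}\phi(w)|^{2}}$.

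For the globalization, I would partition $\mathbb{W}$ into countably many Borel pieces on which $\nabla^{\phi}\phi$ varies by at most $\varepsilon$, apply the pointwise density identity on each piece to obtain \eqref{eq:area_form} for indicator functions, and extend to general $h\in L^{1}(\mathcal{S}^{Q-1}|_{\Sigma})$ by linearity and monotone convergence. The main obstacle is the first step: constructing $d_{1}$ with the required uniform spherical density on every vertical plane. In the Heisenberg group this is classical, but in a general step-$2$ Carnot group the center can have dimension strictly greater than $1$ and different vertical planes intersect the second layer differently, so the calibration demands a genuinely group-theoretic input, which is precisely Di Donato's contribution.
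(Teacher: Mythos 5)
The paper offers no proof of this lemma at all: it is quoted directly from Di Donato \cite{DiDonato}, so there is no internal argument to compare your sketch against. Judged on its own terms, your outline follows the strategy that is standard for such area formulas (calibrate a homogeneous distance so that the spherical measure coincides with Lebesgue measure on every vertical hyperplane, identify the density of $\mathcal{S}^{Q-1}|_{\Sigma}$ by blow-up, then globalize), but the two load-bearing steps are left as black boxes, so what you have is a plan rather than a proof. The first gap is the construction of $d_{1}$ itself, which you explicitly defer ("precisely Di Donato's contribution"); since that existence statement is the only non-routine content of the lemma once the structure theory of intrinsic $C^{1}$ graphs is granted, deferring it means the proposal does not establish the result. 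Moreover, the heuristic you give for why Kor\'anyi or Carnot--Carath\'eodory distances should fail is not the right dichotomy: for any homogeneous norm whose horizontal part is Euclidean-rotation invariant (e.g.\ the Kor\'anyi-type norm $\|\cdot\|_{K}$ of Section 2), the central sections $\mathcal{L}^{N-1}\bigl(B(0,1)\cap\mathbb{V}\bigr)$ are automatically the same for all vertical hyperplanes $\mathbb{V}$, because every such $\mathbb{V}$ contains the entire second layer and meets the horizontal layer in an $(n_{1}-1)$-dimensional subspace. The genuine obstruction is that the density relating $\mathcal{L}^{N-1}|_{\mathbb{V}}$ to $\mathcal{S}^{Q-1}_{d_{1}}$ is a Federer-type density taken over balls that merely \emph{contain} the point, not balls centered on $\mathbb{V}$; the calibration must therefore control off-center sections, and that convexity/symmetry requirement on the unit ball is exactly what the construction of $d_{1}$ has to deliver.

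The second gap is in the blow-up step. Locally uniform convergence of $\delta_{1/r}(p_{0}^{-1}\cdot\Sigma)$ to the tangent vertical plane does not by itself yield mutual absolute continuity of $\mathcal{S}^{Q-1}|_{\Sigma}$ and $\Phi_{\#}\mathcal{L}^{N-1}$, let alone the value of the Radon--Nikodym derivative. You need, in addition, local upper and lower $(Q-1)$-density bounds for $\mathcal{S}^{Q-1}|_{\Sigma}$ (local Ahlfors regularity, available from the intrinsic Lipschitz theory once $\nabla^{\phi}\phi$ is locally bounded) together with a measure-theoretic area/differentiation theorem for spherical measures, in order to pass from pointwise Federer densities to the identity of measures; the usual alternative is to route the argument through the sub-Riemannian perimeter of the subgraph, representing the perimeter measure both as a metric factor times $\mathcal{S}^{Q-1}|_{\Sigma}$ and as the push-forward of $\sqrt{1+|\nabla^{\phi}\phi|^{2}}\,\mathcal{L}^{N-1}$ from $\mathbb{W}$. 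For functions that are only intrinsically (not Euclidean) $C^{1}$, this is precisely where the regularity theory of such graphs enters, and calling it ``a standard blow-up argument'' skips it. So the skeleton of your argument is the right one, but both pivotal ingredients --- the calibrated distance and the density identification --- remain unproved in the proposal.
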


\section{Affine approximation of $C^{1,\alpha}$ intrinsic graphs}\label{curves}
The goal of this section is to prove a H\"older estimate for the affine approximation of intrinsic $C^{1,\alpha}$ graphs. We first lay down some necessary notation. Recall that $Q$ denotes the homogeneous dimension of the step--$2$ Carnot group $\mathbb{G}$, and the first layer of $\text{Lie}\,(\mathbb{G})$ is of dimension $m$. Fix an arbitrary nonzero vector $X_1$ in the first layer of $\text{Lie}\,(\mathbb{G})$, and fix arbitrary vectors $X_2, \ldots, X_m$ such that 
$$
X_1, \ldots, X_m
$$ 
is a basis of the first layer of $\text{Lie}\,(\mathbb{G})$. Let $T_1, \ldots, T_n$ be a basis of 
$$
\text{span}\,\{[X_1,X_2], [X_1, X_3], \ldots, [X_1, X_m]\}.
$$
Let $S_1, \ldots, S_k$ be vectors such that 
$$
\{T_1, \ldots, T_n\} \cup \{S_1, \ldots, S_k\}
$$
is a basis of the second layer of $\text{Lie}\,(\mathbb{G})$, and such that each of $S_1, \ldots, S_k$ is a linear combination of 
$$
\{[X_i,X_j]:2\leq i,j\leq m, i\neq j\}.
$$
Note the set $\{S_1, \ldots, S_k\}$ may be empty. We have $Q=m+2n+2k$. The the domain of our intrinsic Lipschitz graphs will be the subgroup of $\G$ generated by the vector fields $\{X_2, \ldots, X_m, T_1, \ldots, T_n\}$. 

Every point of $\mathbb{G}$ can be written as
$$
e^{\sum_{i=1}^m x_i X_i + \sum_{i=1}^n t_i T_i + \sum_{i=1}^k s_iS_i}.
$$
We use the coordinates $(x_1, \ldots, x_m, t_1, \ldots, t_n, s_1, \ldots, s_k)$ to denote this point. We have that 
$$
\{(0, x_2, \ldots, x_m, t_1, \ldots, t_n, s_1, \ldots, s_k)\}
$$ 
is a co-dimensional $1$ vertical subgroup, which we call vertical hyperplane $\mathbb{W}$. Denote each point on $\mathbb{W}$ by its coordinate $(x_2, \ldots, x_m, t_1, \ldots, t_n, s_1, \ldots, s_k) =(x,t,s)$, where $x:=(x_2, \ldots, x_m)$. In contrast to Section \ref{sec:prelim}, from now on we denote $(x,\zeta)$ by $(x,t,s)$ and denote $|t|:= \sqrt{\sum t_i^2}, |s|:= \sqrt{\sum s_i^2}$. We have
$$
\|z\| \sim  |x|+|t|^{1/2}+|s|^{1/2}.
$$

For convenience, we denote $T=(T_1, \ldots, T_n)$ and $S=(S_1, \ldots, S_k)$. We define constants:
\begin{align*}
&[X_j, X_1]= 2\sum_{i=1}^n C_{j1}^{t_i}T_i =: 2C_{j1}\cdot T, \quad \forall 2\leq j\leq m,\\
&[X_i, X_j] = 2\sum_{l=1}^n C_{ij}^{t_l}T_l + 2\sum_{l=1}^k C_{ij}^{s_l}S_l =: 2C_{ij}^t\cdot T + 2C_{ij}^s \cdot S, \quad \forall 2\leq i,j \leq m,
\end{align*}
where the $C_{j1}$ and the $C_{ij}^t$ are fixed vectors in $\mathbb{R}^n$, and the $C_{ij}^s$ are fixed vectors in $\mathbb{R}^k$. We have $C_{ij}^t = -C_{ji}^t, C_{ij}^s = -C_{ji}^s$ for $2\leq i,j \leq m$. Note the relation between the $C_{ij}$ and the bilinear map $w$ in Section \ref{sec:prelim} is 
$$
w(x, x') = \Big(-C_{j1}x_1x_j' + C_{j1} x_jx_1' + \sum_{i,j=2}^m C_{ij}^t x_i x_j', \sum_{i,j=2}^m C_{ij}^s x_i x_j'\Big) \in \mathbb{R}^{n+k}.
$$

We obtain the relations between the $\partial_{x_l}$ and the $X_l$ from the Campbell-Hausdorff formula. For $2\leq l \leq m$, consider a curve in $\mathbb{G}$
\begin{align*}
\gamma(\epsilon) &= e^{\epsilon X_l}(x_1, x, t,s) = e^{x_1X_1 + x\cdot X + t \cdot T + s\cdot S}e^{\epsilon X_l}\\
&= e^{x_1X_1 + x\cdot X + \epsilon X_l + (t- \sum_{i=2}^mC_{li}^t\epsilon x_i -C_{l1}\epsilon x_1)\cdot T + (s - \sum_{i=2}^m C_{li}^s \epsilon x_i)\cdot S}\\
&=\big(x_1, \cdots, x_l+\epsilon, \cdots, x_m, t- \sum_{i=2}^mC_{li}^t\epsilon x_i -C_{l1}\epsilon x_1, s - \sum_{i=2}^m C_{li}^s \epsilon x_i\big).
\end{align*}
We have
\begin{equation}\label{derivative}
\begin{aligned}
\gamma'(0)= X_l(x_1, x,t,s) = \partial_{x_l}+(\sum_{i=2}^m C_{il}^t x_i + C_{1l}x_1) \cdot \partial_t + \sum_{i=2}^m C_{il}^s x_i \cdot \partial_s.
\end{aligned}
\end{equation}
Similarly, we have $T_l = \partial_{t_l}, S_l = \partial_{s_l}$.

For the main result in this section we will need the following existence lemma. It was proved in \cite[Lemma 4.5]{CFO19} for the case of the Heisenberg group and it can be generalized with straightforward modifications in the case of step-$2$ Carnot groups. 

\begin{lemma}\label{l:integral_along_curves}
Assume that $\phi \in C^{1}(\mathbb{W})$, and fix any $x=(x_2, \ldots, x_m) \in \mathbb{R}^{m-1}$. Then, for all $(x_0,t_0,s_0)\in\mathbb{W}$ and every integral curve $\gamma(\cdot): \mathbb{R} \to \mathbb{W}$ satisfying
\begin{displaymath}
\left\{\begin{array}{ll}\frac{d}{du}\gamma(u)=\sum_{i=2}^m x_i Y_i(\gamma(u)),&u\in \mathbb{R},\\\gamma(u_0)=(x_0,t_0,s_0),\end{array} \right.
\end{displaymath}
one has $\frac{d}{du} \phi(\gamma(u)) = \nabla^\phi \phi(\gamma(u))\cdot x$ for every $u\in \mathbb{R}$, 
and in particular,
\begin{displaymath}
\phi(\gamma(u)) = \int_{u_0}^u \nabla^\phi \phi(\gamma(v))\cdot x\,dv + \phi(x_0,t_0,s_0),\quad u\in  \mathbb{R}.
\end{displaymath}
\end{lemma}

\begin{proof}  
Fix $(x_0,t_0,s_0)\in\mathbb{W}$. By our assumption, $\phi$ is intrinsic differentiable and $\nabla^{\phi}\phi$ is continuous. Therefore,  \cite[Theorem 4.95]{MR3587666} implies that the function $u \mapsto \phi(\gamma(u))$ is in $C^{1}$, and 
\begin{displaymath} 
\phi(\gamma(u)) = \int_{u_0}^u \nabla^\phi \phi(\gamma(v))\cdot x\,dv + \phi(x_0,t_0,s_0),\quad u\in  \mathbb{R}. \end{displaymath}
Although \cite[Theorem 4.95]{MR3587666} is stated for the Heisenberg group, the previous argument only utilizes the implication (iii) $\implies$ (ii) (from \cite[Theorem 4.95]{MR3587666}) whose proof remains valid in our context with no significant changes.
\end{proof}

We are now ready to state and prove the following H\"older estimate  which is the main result in this section.
\begin{proposition}\label{holder}
Let $\phi\in C^{1,\alpha}(\mathbb{W})$ with $L:=\|\nabla^\phi \phi\|_\infty <\infty$. Then for any $p_0=(0,x^0, t^0,s^0)\cdot (\phi(x^0, t^0,s^0),0,0,0)\in \Sigma(\phi)$
$$
|\phi^{(p_0^{-1})}(x,t,s)-\nabla^\phi \phi(x^0, t^0,s^0)\cdot x|\lesssim \max\{\|(x,t,s)\|^{1+\alpha}, \|(x,t,s)\|^{1+\frac{\alpha}{2}}\}, \quad \forall (x,t,s)\in \mathbb{W},
$$
where the implicit constant depends on $L$, and the H\"older continuity constant in the definition of $C^{1,\alpha}(\mathbb{W})$.
\end{proposition}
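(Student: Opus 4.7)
First I would reduce to the case $p_0 = \zero$ and $\phi(\zero) = 0$ by replacing $\phi$ with the translated graph function $\phi^{(p_0^{-1})}$, which is still intrinsic $C^{1,\alpha}$ with the same constants (the defining bound \eqref{eq:c1alpha} is already stated invariantly over $p \in \Sigma(\phi)$). Writing $a := \nabla^{\phi}\phi(\zero) \in \R^{m-1}$, the goal becomes
\[
|\phi(x,t,s) - a\cdot x| \;\lesssim\; \max\{\|(x,t,s)\|^{1+\alpha},\ \|(x,t,s)\|^{1+\alpha/2}\}, \qquad (x,t,s)\in \bW.
\]
For $\|(x,t,s)\|\gtrsim 1$ the bound follows from the intrinsic Lipschitz estimate $|\phi|\lesssim L\|\cdot\|$ (Remark \ref{remk:c1alip}), so I will assume $\|(x,t,s)\|\leq 1$.

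Next I would identify, for $2\leq j\leq m$, the intrinsic vector field $X_j^\phi$ on $\bW$ obtained by pushing $X_j$ through the graph map. Substituting $x_1=\phi(w)$ into formula \eqref{derivative} gives
\[
X_j^\phi = \partial_{x_j} + \Big(\sum_{i=2}^m C_{ij}^t x_i + C_{1j}\,\phi(w)\Big)\!\cdot \partial_t + \sum_{i=2}^m C_{ij}^s x_i\cdot \partial_s,
\]
and by definition of the intrinsic gradient $X_j^\phi\phi = (\nabla^\phi\phi)_{j-1}$.

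The heart of the proof is the construction of a piecewise path $\gamma:[0,L]\to \bW$ from $\zero$ to $(x,t,s)$ whose segments are integral curves of the $X_j^\phi$, split into two phases. In the \emph{horizontal phase}, I flow along $X_2^\phi,\ldots,X_m^\phi$ in turn for total time $\sim |x|$, arriving at a point whose horizontal coordinates match $x$ but whose second-layer coordinates differ from $(t,s)$ by an error of order $\|(x,t,s)\|^2$. In the \emph{commutator phase}, I correct the remaining $t$- and $s$-displacements using closed $x$-loops consisting of four consecutive $X_i^\phi$ and $X_j^\phi$ flows; the net second-layer motion produced by such a loop is governed by the $C_{ij}^s$ coefficients (for $s$-corrections) and by the combination of $C_{ij}^t$ and $C_{1j}\phi$ (for $t$-corrections, the latter being the only mechanism available when $m=2$, i.e.\ the Heisenberg case). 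A loop of side length $\rho\sim \|(x,t,s)\|^{1/2}$ produces second-layer displacement of order $\rho^2\sim \|(x,t,s)\|$, exactly the scale we need.

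I then evaluate $\phi(w) - a\cdot x$ as a telescope along $\gamma$: on each $X_j^\phi$-segment $\tfrac{d}{d\tau}\phi(\gamma(\tau)) = (\nabla^\phi\phi)_{j-1}(\gamma(\tau))$, and by \eqref{eq:c1alpha} at $p_0=\zero$ this differs from $a_{j-1}$ by at most $C\|\gamma(\tau)\|^\alpha \lesssim \|(x,t,s)\|^\alpha$. Integrating over the horizontal phase (length $\lesssim \|(x,t,s)\|$) gives a contribution $\lesssim \|(x,t,s)\|^{1+\alpha}$. For each commutator loop, the leading-order contribution $a\cdot(\text{net }x\text{-displacement})$ vanishes because the loop is closed in $x$; the residual is the $\alpha$-Hölder oscillation of $\nabla^\phi\phi$ integrated against total length $\sim \|(x,t,s)\|^{1/2}$, yielding $\lesssim \|(x,t,s)\|^{1/2}\cdot \|(x,t,s)\|^\alpha = \|(x,t,s)\|^{1+\alpha/2}$. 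Summing the two phases gives the claimed estimate.

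The main obstacle I anticipate is the commutator phase in the general step-$2$ setting: unlike $\mathbb{H}^1$ where $\bW$ is two-dimensional and a single commutator direction needs to be corrected, here one must correct $n+k$ independent second-layer coordinates simultaneously without the corrections interfering. A further subtlety is that the $t$-correction mechanism couples to $\phi$ itself through the $C_{1j}\phi$ term, so the path design depends on the very function one is trying to bound. I expect both issues to be manageable by performing the corrections sequentially in a prescribed order (e.g.\ correct $s$-coordinates before $t$-coordinates, using distinct pairs $(i,j)$), and by using the \emph{a priori} intrinsic Lipschitz bound on $\phi$ to control the $\phi$-dependent coefficients in the loop construction, ensuring that the residual error from each loop stays within the $\|(x,t,s)\|^{1+\alpha/2}$ budget.
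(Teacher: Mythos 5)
Your reduction, the introduction of the projected fields on $\mathbb{W}$, the horizontal phase, and the key cancellation ``the $a\cdot(\text{net }x\text{-displacement})$ term vanishes because the loop is closed in $x$'' all match the paper's construction: the paper runs a first curve $\gamma_1$ along $\sum_i x_iY_i$, then corrects the $s$-coordinates by Chow's theorem using the commutators $[Y_p,Y_q]$ (whose $S$-components are the \emph{constant} vectors $2C^s_{pq}$, which do span the $S$-layer), and uses exactly your observation, in the form $\int_0^1 b_i(u)\,du=0$, to kill the linear term. The genuine gap is your treatment of the $t$-directions. The $T$-component of $[Y_p,Y_q]$ is $2C^t_{pq}-2(\nabla^\phi\phi)_p\,C_{q1}+2(\nabla^\phi\phi)_q\,C_{p1}$: the constant part $C^t_{pq}$ need not span the $T$-layer (it can vanish identically), and the $\phi$-dependent part is proportional to $\nabla^\phi\phi$, which may be arbitrarily small along your loop. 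A loop of side $\rho\sim\|(x,t,s)\|^{1/2}$ then produces a $T$-displacement far smaller than $\rho^2$, so boundedly many loops cannot close a $t$-gap of coordinate size $\sim\|(x,t,s)\|^2$, and using unboundedly many loops destroys the length budget on which your $\|(x,t,s)\|^{1+\alpha/2}$ bound rests. The failure is already fatal in $\mathbb{H}^1$ (the case $m=2$, covered by the proposition): there is only one field $Y_2$ on $\mathbb{W}$, so every $x$-closed concatenation of its flows returns exactly to its starting point and produces zero second-layer displacement. The a priori Lipschitz bound on $\phi$ cannot rescue this, because the obstruction is that the $\phi$-dependent coefficients may be too \emph{small}, not too large.

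What the paper does instead for the $t$-correction is a paired-flow argument that uses the drift $-2C_{l1}\phi\,\partial_t$ dynamically rather than through commutators: it runs two integral curves $\alpha,\beta$ of the \emph{same} field $\sum_l\theta_lY_l$, started at $(x,t,s)$ and $(x,t',s)$, with $\theta$ chosen so that $2\sum_l\theta_lC_{l1}\cdot T$ is parallel to $(t-t')\cdot T$. Then $\beta-\alpha$ evolves only in the $T$-direction with speed proportional to $|\phi(\beta)-\phi(\alpha)|$, so either this difference falls below $|t'-t|^{1/2+\alpha/4}$ at some first time, or the vertical gap closes; in either case an estimate of $\frac{d}{dv}|\beta'(v)-\alpha'(v)|^2$ using the $C^\alpha$ bound on $\nabla^\phi\phi$ transports the smallness back to time $0$, yielding $|\phi(x,t,s)-\phi(x,t',s)|\lesssim|t'-t|^{1/2+\alpha/4}\lesssim\|(x,t,s)\|^{1+\alpha/2}$. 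You would need this step (or some substitute of comparable strength) to complete your plan. A minor further point: the correct projected field is $Y_l=\partial_{x_l}+(\sum_iC^t_{il}x_i-2C_{l1}\phi)\cdot\partial_t+(\sum_iC^s_{il}x_i)\cdot\partial_s$, i.e.\ the drift carries a factor $2$ coming from the graph parametrization $w\mapsto w\cdot(\phi(w)e_1)$, not the coefficient obtained by naively substituting $x_1=\phi$ into \eqref{derivative}; only with this field does $Y_l\phi=(\nabla^\phi\phi)_l$ hold.
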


\begin{proof}
By the definition of $\nabla^\phi \phi$, we have
$$
|\phi^{(p_0^{-1})}(x,t,s)-\nabla^\phi \phi(x^0, t^0,s^0)\cdot x| = |\phi^{(p_0^{-1})}(x,t,s)-\nabla^{\phi^{(p_0^{-1})}}\phi^{(p_0^{-1})}(0, 0,0)\cdot x|.
$$
And since the constant $L$ and the H\"older continuity constant are invariant under left translations, it suffices to show for $\phi$ with $\phi(0,0,0)=0$,
$$
|\phi(x,t,s)-\nabla^\phi \phi(0,0,0)\cdot x|\lesssim \|(x,t,s)\|^{1+\frac{\alpha}{2}},
$$
for $(x,t,s)$ with $\|(x,t,s)\|$ sufficiently small.

To estimate the difference between $\phi(x,t,s)$ and its linear approximation $\nabla^\phi \phi(0,0,0)\cdot x$, we will introduce finitely many intermediate points for $(x,t,s)$ and $(0,0,0)$, which allow us to estimate the differences in $\phi$ values at successive points. For that, we now construct vector fields whose integral curves will guide the selection of these intermediate points.

In order to define appropriate vector fields $Y_2, \ldots, Y_m$ on which the integral curves travel along, we need to compute $\phi^{(p^{-1})}$. We have
\begin{align*}
\Sigma(\phi)(x,t,s)&= (0, x,t,s) \cdot (\phi(x,t,s),0,0,0).
\end{align*}
Let $p =(0,\bar x, \bar t,\bar s)\cdot \big(\phi(\bar x, \bar t,\bar s), 0, 0, 0\big)$. By the Campbell-Hausdorff formula,
\begin{align*}
&\quad p^{-1}\cdot \Sigma(\phi)(x',t',s')\\
& = \big(-\phi(\bar x, \bar t, \bar s), 0,0,0
\big) \cdot (0, -\bar x, -\bar t, -\bar s)\cdot (0,x',t',s') \cdot \big(\phi(x',t',s'),0,0,0\big)\\
&=\big(-\phi(\bar x, \bar t, \bar s), 0,0,0
\big) \cdot \big(0, x'-\bar x, t'-\bar t - \sum_{i,j=2}^m C_{ij}^t\bar x_ix_j', s'-\bar s - \sum_{i,j=2}^m C_{ij}^s\bar x_ix_j'\big)\\
&\qquad \qquad \qquad \qquad \qquad \qquad \cdot \big(\phi(x',t',s'),0,0,0\big)\\
& =\Big(\phi(x',t',s')-\phi(\bar x, \bar t, \bar s), x'-\bar x,\\
&\qquad t'-\bar t - \sum_{i,j=2}^m C_{ij}^t\bar x_ix_j' + \sum_{i=2}^m C_{i1} \big(\phi(\bar x, \bar t,\bar s)+\phi(x', t',s')\big)(x_i'-\bar x_i), s'-\bar s - \sum_{i,j=2}^m C_{ij}^s\bar x_ix_j'\Big)\\
& = \Big(\phi^{(p^{-1})}(x, t, s), x, t + \sum_{i=2}^m C_{i1} x_i \phi^{(p^{-1})}(x, t, s), s\Big)\\
&=(0,x, t, s) \cdot \big(\phi^{(p^{-1})}(x, t, s), 0,0,0\big),
\end{align*}
where
$$
\left\{
\begin{aligned}
&\phi^{(p^{-1})}(x, t, s) = \phi(x',t',s')-\phi(\bar x, \bar t, \bar s),\\
& x'= \bar x + x,\\
& t' = \bar t + t + \sum_{i,j=2}^m C_{ij}^t \bar x_ix_j - 2\sum_{i=2}^m C_{i1} x_i \phi(\bar x, \bar t, \bar s),\\
& s' = \bar s + s + \sum_{i,j=2}^m C_{ij}^s \bar x_i x_j.
\end{aligned}
\right.
$$
  For the formulas for $t'$ and $s'$, we used the fact that $x_j' = \bar x_j + x_j$ and the fact that the $C_{ij}$ are the coefficients of an antisymmetric form to get that
  \begin{align*}
    \sum_{i,j=2}^m C_{ij}^t \bar x_i x_j' = \sum_{i,j=2}^m C_{ij}^t \bar x_i x_j.
  \end{align*}
Therefore
\begin{align*}
\phi^{(p^{-1})}(x,t,s) = \phi \Big(x+\bar x, t+\bar t + \sum_{i,j=2}^m C_{ij}^t \bar x_ix_j - 2\sum_{i=2}^m C_{i1} x_i \phi(\bar x,\bar t, \bar s), s+\bar s + \sum_{i,j=2}^m C_{ij}^s \bar x_ix_j\Big) - \phi(\bar x,\bar t,\bar s).
\end{align*}

We define vector fields on the $(x,t,s)$-hyperplane as follows:
$$
Y_l = \partial_{x_l} + \big(\sum_{i=2}^m C_{il}^tx_i - 2C_{l1}\phi(x,t,s)\big)\cdot \partial_t + \big(\sum_{i=2}^m C_{il}^s x_i\big)\cdot \partial_s, \quad \forall 2\leq l \leq m.
$$
This comes from the group multiplication law. For example, in the case of the $n$-dimensional Heisenberg group $\mathbb{H}^n$,
\begin{align*}
&\quad \left(x_1, \ldots, x_n, y_1, \ldots, y_n, t\right) \cdot \left(x_1', \ldots, x_n', y_1', \ldots, y_n',t'\right) \\
&= \left(x_1 + x_1', \ldots, x_n+ x_n', y_1 + y_1', \ldots, y_n + y_n', t+t' + \frac{1}{2}\big(x_1 y_1' - x_1'y_1 + \ldots, x_n y_n' - x_n'y_n\big)\right),
\end{align*}
so the vector fields take the form:
\begin{align*}
Y_i &= \left(0,\ldots, 0,\underbrace{1}_{\text{$i$-th coordinate}}, 0\ldots, 0,\frac{1}{2}y_i\right), \quad \forall 2\leq i\leq n,\\
Y_{n+1} &= \left(0, \ldots, 0, \underbrace{1}_{\text{$(n+1)$-th coordinate}}, 0, \ldots, 0, -\phi(x,y,t)\right),\\
Y_i &= \left(0, \ldots, 0, \underbrace{1}_{\text{$(n+i)$-th coordinate}}, 0, \ldots, 0, \frac{1}{2} x_i \right), \quad \forall n+2 \leq i \leq 2n.
\end{align*}

By the definition of $\nabla^\phi \phi$, we have $Y_l(\phi) = (\nabla^\phi \phi)_l$, the $l$-th component of the $\nabla^\phi \phi$. Due to (\ref{derivative}), by viewing the $Y_l$ as vector fields on $\mathbb{G}$,
\begin{align*}
Y_l &= X_l + \big(C_{l1}x_1 - 2C_{l1}\phi(x,t,s)\big)\cdot T.   
\end{align*}
Thus for $2\leq p, q\leq m$,
\begin{align*}
[Y_p, Y_q] &=[X_p, X_q] + Y_p\big(C_{q1}x_1-2C_{q1}\phi\big)\cdot T - Y_q\big(C_{p1}x_1-2C_{p1}\phi\big)\cdot T\\
&= [X_p, X_q]-(\nabla^\phi \phi)_p\cdot  (2C_{q1}\cdot T) + (\nabla^\phi \phi)_q\cdot  (2C_{p1}\cdot T).
\end{align*}
Therefore for each $1\leq i \leq k$, $S_i$ is a linear combination of $[Y_p, Y_q] \,(2\leq p, q\leq m)$, modulo a linear combination of $T_1, \ldots, T_n$.

Let $\gamma_1:[0,1] \to \mathbb{R}^{m-1}\times \mathbb{R}^n \times \mathbb{R}^k$ be an absolutely continuous curve such that $\gamma_1(0)=(0,0,0)$ and
$$
\gamma_1'(u) = \sum_{i=2}^m x_i Y_i(\gamma_1(u)), \quad  \forall u\in [0,1].
$$
Since $\phi$ is intrinsically differentiable, $\phi$ is continuous. So, by Peano's theorem, this integral curve exists (not necessarily unique) on some maximal interval $J$ containing $0$. Moreover, 
$$
\gamma_1(u)= \int_0^u \sum_{i=2}^m x_iY_i(\gamma_1(v))\,dv=\Big(x_2u,\ldots, x_mu, \underbrace{-2\sum_{i=2}^m C_{i1}^t x_i \int_0^u \phi(\gamma_1(v))\,dv}_{t \text{ components}},\underbrace{\vphantom{\left(\frac{a^{8}}{b}\right)} 0}_{s\text{ components}}\Big)
$$ 
leaves any compact set of the plane $\mathbb{W}$, as $u$ tends to either endpoint of $J$; see for instance
\cite[Corollary 2.16]{MR2961944} or \cite[Theorem 2.1]{MR587488}.
We presently want to argue that $J = \R$. 

By Lemma \ref{l:integral_along_curves}, we have an integral representation of $\phi$ along $\gamma_1$:
\begin{equation}\label{form26} \phi(\gamma_1(u)) = \int_{0}^u \nabla^{\phi}\phi(\gamma_1(v))\cdot x\;dv, \qquad u \in J. \end{equation}
Hence, for $y \in J$, each $t_k$-component of $\gamma_1(y)$ equals to
\begin{equation}\label{form27} 
-2\sum_{i=2}^m C_{i1}^{t_k} x_i \int_0^y \phi(\gamma_1(u))\,du 
\stackrel{\eqref{form26}}{=} -2\sum_{i=2}^m C_{i1}^{t_k} x_i\int_{0}^{y} \int_{0}^{u} \nabla^{\phi}\phi(\gamma_1(v))\cdot x \, dv \, du.   
\end{equation}
Since $|\nabla^{\phi}\phi| \leq L$ by hypothesis, we see that the $t$-components of $\gamma_1(y)$ cannot blow up in finite time; hence $J = \R$, and \eqref{form27} even provides a quantitative bound:  each $t_k$-component of $\gamma_1(y)$ satisfies $\lesssim_L y^2$, for all $y\in J=\mathbb{R}$. A similar argument, combined with Lemma \ref{l:integral_along_curves}, also supports our assertions for the curves $\gamma_2, \alpha$, and $\beta$ constructed in the remainder of this section.

Note that the first component of $\gamma_1(1)$ is $x$. By Lemma \ref{l:integral_along_curves}, we have
\begin{align*}
&\quad \big|\phi(\gamma_1(1)) - \nabla^\phi \phi(0,0,0)\cdot x\big|\\
&= \Big| \int_0^1 \sum_{i=2}^m x_i Y_i(\phi)(\gamma_1(u))\,du -  \nabla^\phi \phi(0,0,0)\cdot x\Big|\\
& = \Big| \int_0^1 \big(\nabla^\phi \phi(\gamma_1(u)) - \nabla^\phi \phi(0,0,0)\big)\cdot x\,du\Big|\\
&\lesssim |x| \int_0^1 \|\gamma_1(u)\|^\alpha\,du,
\end{align*}
where the implicit constant depends on the H\"older continuity constant. By Gronwall's inequality, $\gamma_1(u)$ is bounded for $u\in [0,1]$. Hence
\begin{align*}
\gamma_1(u)&= \int_0^u \gamma_1'(v)\,dv = \int_0^u \sum_{i=2}^m x_i Y_i\Big|_{\gamma_1(v)}\,dv\\
&= \int_0^u \sum_{i=2}^m x_i \partial_{x_i}\,dv + \int_0^u \int_0^v\sum_{j=2}^mx_jY_j\big(\sum_{i=2}^m x_iY_i\big)\Big|_{\gamma_1(w)}\,dw\,dv\\
&= \int_0^u \sum_{i=2}^m x_i \partial_{x_i}\,dv + \sum_{i,j=2}^mx_j x_i\int_0^u \int_0^vY_j\big(Y_i\big)\Big|_{\gamma_1(w)}\,dw\,dv,
\end{align*}
where the first term is $O(|x|)$ and is along the first layer, and the second term, which reflects the size on the second layer, is $O(|x|^2)$. Therefore $\|\gamma_1(u)\|=O(|x|)$, and thus $|\phi(\gamma_1(1)) - \nabla^\phi \phi(0,0,0)\cdot x| \lesssim |x|^{1+\alpha} \lesssim \|(x,t,s)\|^{1+\frac{\alpha}{2}}$ for $\|(x,t,s)\|$ sufficiently small.

Denote $\gamma_1(1)=(x,t^1, s^1)$. We have $|\phi(\gamma_1(1))|\lesssim \|(x,t,s)\|$ for $\|(x,t,s)\|$ sufficiently small. We have $|s-s^1| \leq |s|+|s^1|=O(\|(x,t,s)\|^2)$. Thus we can travel within $O(\|(x,t,s)\|^2)$ time from $(x,t^1, s^1)$ to $(x,t^1,s)$ along a fixed linear combination of vector fields $S_1, \ldots, S_k$. Hence we can travel within $O(\|(x,t,s)\|^2)$ time from $(x,t^1, s^1)$ to $(x,t',s)$ for some $t'\in \mathbb{R}^n$, along a fixed linear combination of vector fields $[Y_p, Y_q] \,(2\leq p, q\leq m)$. By Chow's theorem, if we travel along $Y_2, \ldots, Y_m$, we can travel from $(x,t^1, s^1)$ to $(x,t',s)$ within $O(\|(x,t,s)\|)$ time. Let  $\gamma_2: [0,1] \to \mathbb{R}^{m-1} \times \mathbb{R}^n \times \mathbb{R}^k$ be an absolutely continuous curve such that $\gamma_2(0)=(x,t^1, s^1), \gamma_2(1)=(x,t',s)$, and such that 
$$
\gamma_2'(u) = \sum_{i=2}^m b_i(u)Y_i(\gamma_2(u)), \quad a.e.\,u\in [0,1],
$$
where the functions $b_i(u)=O(\|(x,t,s)\|)$ are piecewise constant. Thus, by viewing $\gamma_2$ as a concatenation of several connecting curves, we can still apply Lemma \ref{l:integral_along_curves}.

By Gronwall's inequality, $\gamma_2(u)$ is bounded for $u\in [0,1]$. Hence
\begin{align*}
\gamma_2(u)-\gamma_2(0) &= \int_0^u \gamma_2'(v)\,dv = \int_0^u \sum_{i=2}^m b_i(v)Y_i\Big|_{\gamma_2(v)}\,dv\\
&=\int_0^u \sum_{i=2}^m b_i(0)Y_i(\gamma_1(1))\,dv + \int_0^u\int_0^v \sum_{j=2}^mb_j(w)Y_j\big(\sum_{i=2}^mb_i(w)Y_i\big)\Big|_{\gamma_2(w)}\,dw\,dv 
\end{align*}
where the first term is $O(\|(x,t,s)\|)$, the second term is $O(\|(x,t,s)\|^2)$. By the definition of $Y_i(\gamma_1(1))$, the $t, s$ components of the above first term are $O(\|(x,t,s)\|^2)$. Therefore $\|\gamma_2(u)\| = O(\|(x,t,s)\|)$. Hence
\begin{align*}
&\quad \big|\phi(\gamma_2(1)) - \phi(\gamma_2(0))- \int_0^1\sum_{i=2}^m b_i(u) (\nabla^\phi \phi)_i(0,0,0)\,du\big|\\
& =\big| \int_0^1 \sum_{i=2}^m b_i(u) \big((\nabla^\phi \phi)_i(\gamma_2(u))-(\nabla^\phi \phi)_i(0,0,0)\big)\,du\big|\\
&\lesssim \|(x,t,s)\|^{1+\alpha} \lesssim \|(x,t,s)\|^{1+\frac{\alpha}{2}},
\end{align*}
for sufficiently small $\|(x,t,s)\|$. And since the vector $\big(\int_0^1 b_i(u)\,du\big)_{2\leq i\leq m}$ equals the $x$ component of 
$$
\int_0^1 \sum_{i=2}^m b_i(u)Y_i(\gamma_2(u))\,du = \gamma_2(1)-\gamma_2(0) =(0,t'-t^1,s-s^1),
$$
we have $\int_0^1 b_i(u)\,du=0$ for $2\leq i \leq m$. Therefore
$$
\big|\phi(\gamma_2(1)) - \phi(\gamma_2(0))\big| \lesssim \|(x,t,s)\|^{1+\frac{\alpha}{2}},
$$
for sufficiently small $\|(x,t,s)\|$.

Recall $\|(x,t', s)\|=\|\gamma_2(1)\| =O(\|(x,t,s)\|)$. It remains to show 
$$
|\phi(x,t,s)-\phi(x,t',s)|\lesssim \|(x,t,s)\|^{1+\frac{\alpha}{2}},
$$
for sufficiently small $\|(x,t,s)\|$. Without loss of generality assume $\phi(x,t,s)-\phi(x,t',s)\neq 0$. We have
$$
Y_l(x,t,s)-Y_l(x,t', s)= -2\big(\phi(x,t,s)-\phi(x,t',s)\big)C_{l1}\cdot T.
$$
There exist constants $\theta_2, \ldots, \theta_m$ such that
$$
2\sum_{l=2}^m \theta_l C_{l1}\cdot T = \text{sgn}\,(\phi(x,t,s)-\phi(x,t',s))(t-t')\cdot T.
$$
We have $|\theta_2|+ \ldots + |\theta_m|\lesssim |t-t'|$.

Let $\alpha, \beta: [0,\infty)\to \mathbb{R}^{m-1} \times \mathbb{R}^n \times \mathbb{R}^k$ be absolutely continuous curves such that for a.e. $u$,
$$
\alpha(0)= (x,t,s),\quad  \beta(0) = (x,t',s), \quad \alpha'(u) =\sum_{l=2}^m \theta_l Y_l(\alpha(u)), \quad \beta'(u) = \sum_{l=2}^m \theta_l Y_l(\beta(u)).
$$
Assume for contradiction that $|\phi(\beta(u))-\phi(\alpha(u))|>|t'-t|^{\frac{1}{2}+\frac{\alpha}{4}}$ for all $u\in [0,\infty)$. Then the sign of $\phi(\beta(u))-\phi(\alpha(u))$ does not change; otherwise $\phi(\beta(u))-\phi(\alpha(u))$ achieves $0$ somewhere in $[0, \infty)$. We have
\begin{align*}
\beta'(u)-\alpha'(u) =-2(\phi(\beta(u))-\phi(\alpha(u))) \sum_{l=2}^m \theta_l C_{l1}\cdot T = -|\phi(\beta(u))-\phi(\alpha(u))|(t'-t)\cdot T,
\end{align*}
and thus
\begin{align*}
\beta(u)-\alpha(u)&= (0,t'-t,0) +\int_0^u (\beta'(v)-\alpha'(v))\,dv\\
&=(t'-t)\cdot T \Big(1-\int_0^u|\phi(\beta(v))-\phi(\alpha(v))|\,dv\Big)
\end{align*}
can reach $0$ for some $u>0$; contradiction.

Take the smallest $u\geq 0$ such that $|\phi(\beta(u))-\phi(\alpha(u))|\leq |t'-t|^{\frac{1}{2}+\frac{\alpha}{4}}$. Then on $[0,u]$, $\phi(\beta(v))-\phi(\alpha(v))$ does not change sign, and $\sum_{l=2}^m \theta_l\big(Y_l(\beta(v))-Y_l(\alpha(v))\big)$ does not change direction, and hence
\begin{align*}
&\quad |\phi(x,t',s)-\phi(x,t,s)|^2 |(t'-t)\cdot T|^2 = |\phi(\beta(u))-\phi(\alpha(u))|^2 |(t'-t)\cdot T|^2 - \int_0^u\frac{d}{dv} \big|\beta'(v)-\alpha'(v)\big|^2\,dv,
\end{align*}
and
\begin{align*}
&\quad \Big|\int_0^u\frac{d}{dv} \big|\beta'(v)-\alpha'(v)\big|^2\,dv\Big| = 2\Big| \int_0^u (\beta'(v)-\alpha'(v)) \cdot (\phi(\beta(v))-\phi(\alpha(v)))'((t'-t)\cdot T)\,dv\Big|\\
&= 2\Big|\int_0^u \sum_{l=2}^m \theta_l\big(Y_l(\beta(v))-Y_l(\alpha(v))\big) ((t'-t)\cdot T) \theta \cdot \big(\nabla^\phi \phi(\beta(v))-\nabla^\phi \phi(\alpha(v))\big)\,dv\Big|\\
&\lesssim |\theta| \sup_{v\in [0,u]} \|\beta(v)-\alpha(v)\|^\alpha \Big|\int_0^u \sum_{l=2}^m \theta_l \big(Y_l(\beta(v))-Y_l(\alpha(v))\big)((t'-t)\cdot T)\,dv\Big|\\
&\lesssim |t'-t| \cdot |t'-t|^{\frac{\alpha}{2}} \Big|\big((\beta(u)-\alpha(u))-(\beta(0)-\alpha(0))\big)((t'-t)\cdot T)\Big|\\
&\lesssim |t'-t| \cdot |t'-t|^{\frac{\alpha}{2}} \cdot |t'-t|^2.
\end{align*}
Therefore
\begin{align*}
|\phi(x,t',s)-\phi(x,t,s)| \lesssim |t'-t|^{\frac{1}{2}+\frac{\alpha}{4}} \lesssim \|(x,t,s)\|^{1+\frac{\alpha}{2}}.
\end{align*}
\end{proof}

\begin{remark}
If the collection $\{S_1, \ldots, S_k\}$ is empty, the construction of the curve $\gamma_2$ becomes unnecessary, and the proof proceeds without this step.
\end{remark}

\begin{remark}
In the above proof, we do not fully use the differentiability assumption that
\begin{equation}\label{strong}
\phi^{(p_0^{-1})}(x,t,s) =\nabla^\phi \phi(x^0,t^0,s^0)\cdot x + o(\|(x,t,s\|).
\end{equation}
Instead, of the differentiability assumption we use only 
\begin{equation}\label{weak}
\phi^{(p_0^{-1})}(x,0,0) =\nabla^\phi \phi(x^0,t^0,s^0)\cdot x + o(|x|).
\end{equation}
Thus (\ref{weak}) implies, by the above proposition, the seemingly stronger notion of differentiability (\ref{strong}), under the H\"older continuity condition $\nabla^\phi \phi \in C^\alpha$.
\end{remark}

\section{Proof of Theorem \ref{theoreminfinity}}\label{4}

In this section we prove Theorem \ref{theoreminfinity}. The proof follows along the lines of \cite[Section 3]{CFO19}. We provide an outline of the steps which are essentially the same as the ones from \cite{CFO19} and we specify what needs to be modified. Note, that in our theorem, unlike  the result from \cite{CFO19}, we do not require that the intrinsic graphs are compactly supported. We provide full details for the proof in the non-compact case.  


Similarly to \cite[Lemma 3.1]{CFO19}, proving Theorem \ref{theoreminfinity} for any $(Q-1)$-ADR measure $\mu$ on $\Sigma$ is equivalent to proving the theorem merely for $\mu=\mathcal{S}^{Q-1}$. Thus, we need to prove
\begin{equation}\label{3.1}
\|T_{\mu, \epsilon}f\|_{L^2(\mu)} \lesssim \|f\|_{L^2(\mu)}, \quad \forall f\in L^2(\mu), \epsilon>0
\end{equation}
for $\mu = \mathcal{S}^{Q-1}$, where the implicit constant is independent of $f$ and $\epsilon$. We now fix $\mu = \mathcal{S}^{Q-1}$.

A version of the $T1$ theorem needs to be employed, and for that, we need to introduce the Christ cubes as below (\cite{christ}). For $j\in \mathbb{Z}$, there exists a family $\Delta_j$ of disjoint subsets of $\Sigma$ with following properties:
\begin{itemize}
    \item 
    $\Sigma \subset \bigcup_{Q\in \Delta_j} \overline{Q}$.
    \item 
    If $j\leq k, Q\in \Delta_j$ and $Q'\in \Delta_k$, then either $Q\cap Q'=\emptyset$ or $Q\subset Q'$.
    \item 
    If $Q\in \Delta_j$, then $\text{diam}\,Q\leq 2^j =: \ell (Q)$.
    \item 
    Every cube $Q\in \Delta_j$ contains a ball $B(z_Q, c2^j) \cap \Sigma$ for some $z_Q\in Q$, and some constant $c>0$.
    \item 
    Every cube $Q\in \Delta_j$ has thin boundary: there is a constant $D\geq 1$ such that $\mu(\partial_\rho Q) \leq D \rho^{\frac{1}{D}} \mu(Q)$, where
    $$
\partial_\rho Q:= \{q\in Q: \text{dist}\,(q, \Sigma\backslash Q)\leq \rho \cdot \ell (Q)\}, \quad \rho>0.
    $$
\end{itemize}
The sets in $\Delta:= \cup \Delta_j$ are called Christ cubes. It is immediate from the construction that $\mu(Q) \sim \ell (Q)^{Q-1}$ for $Q\in \Delta_j$.

By the $T1$ theorem of David and Journé, applied to the homogeneous metric measure space $(\Sigma, d, \mu|_{\Sigma})$ (see Fernando's honors thesis \cite{surath} for the details on how the $T1$ theorem extends to the case of homogeneous metric measure space), in order to prove \eqref{3.1} it suffices to verify:
\begin{equation}\label{restricted}
\|T_{\mu, \epsilon} \chi_R\|_{L^2(\mu|R)}^2 \lesssim \mu(R), \quad \|T_{\mu, \epsilon}^* \chi_R\|_{L^2(\mu|R)}^2 \lesssim \mu(R),
\end{equation}
for all $R\in \Delta$, where $T_{\mu, \epsilon}^*$ is the formal adjoint of $T_{\mu, \epsilon}$, and the implicit constants are independent of $\epsilon$ and $R$. Since the inequality in (\ref{restricted}) for $T_{\mu, \epsilon}\chi_R$ and that for $T_{\mu, \epsilon}^*\chi_R$ are proved similarly, we will only focus on $T_{\mu, \epsilon}\chi_R$.

We now decompose the operator $T_{\mu, \epsilon}$. As in \cite{CL}, we fix a smooth even function $\psi:\R \to \R$ such that $\psi|_{[-1/2,1/2]}=1$, and $\psi|_{\R \setminus [-2,2]}=0$. We then consider the $\|\cdot \|$-radial functions $\psi_{j} : \G \to \R, j\in \mathbb{Z},$ defined by 
$$\psi_{j}(p)= \psi(\|\delta_{2^j}(p)\|)= \psi(2^{j}\|p\|),$$
and we let 
$$\eta_{j} := \psi_{j} - \psi_{j + 1}, j \in \mathbb{Z}.$$ 
Note that the functions $\{\eta_j\}_{j\in \mathbb{Z}}$ satisfy $\sum \eta_j =1$ and
$$
\text{supp}\,\eta_j \subseteq B(0, 2^{1-j})\backslash B(0, 2^{-2-j}).
$$
Denote
$$
T_{(j)}f(p) = \int K(q^{-1}\cdot p) \eta_j(q^{-1}\cdot p) f(q)\,d\mu(q), \quad S_N:= \sum_{j\leq N} T_{(j)}.
$$
Lemma 3.3 of \cite{CFO19} proves that $T_{\mu, \epsilon}f$ is close to $S_Nf$ in $L^\infty$ if $\epsilon\in [2^{-N}, 2^{-N+1})$ and that it suffices to show 
$$
\|S_N \chi_R\|_{L^2(\mu|R)}^2 \lesssim \mu(R), \quad \forall R\in \Delta, N\in \mathbb{Z},
$$
with the implicit constant independent of $R, N$. 

Denote 
$$
\rho(k) = 2^{1-k}/\ell(R), \quad \partial_{\rho(k)}R = \{q\in R: (\rho(k)/2)\cdot \ell (R)<\text{dist}\,(q,\Sigma\backslash R) \leq \rho(k) \cdot \ell (R)\}.
$$
Note that $\partial_{\rho(k)}R=\emptyset$ for $\rho(k)>2$. Same as in Section 3 of \cite{CFO19}, we let
\begin{align*}
&\quad S_N\chi_R \\
&= \sum_{2^{-N} \leq 2^{-j} < \min\{1, 2^{-k}\}} T_{(j)}\chi_R + \sum_{\min\{1, 2^{-k}\} \leq 2^{-j} <2^{-k}} T_{(j)}\chi_R + \sum_{2^{-k} \leq 2^{-j} < 4\ell (R)} T_{(j)}\chi_R + \sum_{2^{-j}\geq 4\ell (R)} T_{(j)}\chi_R\\
&=: S_{I}\chi_R + S_{II}\chi_R + S_{III}\chi_R + \sum_{2^{-j}\geq 4\ell (R)} T_{(j)}\chi_R.
\end{align*}
The estimates of $S_{I}\chi_R+ S_{III}\chi_R + \sum_{2^{-j}\geq 4\ell (R)} T_{(j)}\chi_R$ are obtained similarly as what was done in \cite[Section 3]{CFO19} and are not affected by the fact that we now consider $\phi$ without necessarily having compact support. In particular, the term $\sum_{2^{-j} \geq 4\ell(R)} T_{(j)} \chi_R$ is zero, because the kernel $q\mapsto K(q^{-1}\cdot p)\eta_j(q^{-1}\cdot p)$ of $T_{(j)}$ is supported in the complement of $B(p,2^{-2-j})$, where $2^{-j}\geq 4\ell(R)$.
The fact that $\phi$ might not be compactly supported  only affects the estimate of $S_{I\!I} \chi_R$. Note also that if $2^{-k} \leq 1$ then $S_{II}\chi_R=0$. Thus, we can assume that $2^{-k} > 1$.

It remains to estimate 
$$
S_{I\!I} \chi_R(p)= \sum_{j: 1\leq 2^{-j} < 2^{-k}} \int_\Sigma K (q^{-1}\cdot p) \eta_j(q^{-1}\cdot p)\,d\mu(q).
$$ 
We define $K_{I\!I} = \sum_{j : 1 \leq 2^{-j} < 2^{-k}} K \eta_j$.
Recall we assume that $\phi$ does not have rapid growth at $\infty$, i.e., there exist $0<\gamma, \theta<1$ such that for every $w=(x,t,s)\in \mathbb{W}, p\in \Sigma$,
$$
|\phi^{(p^{-1})}(x,t,s)| \lesssim \|(x,t,s)\|^{1-\theta}, \qquad |\nabla^\phi \phi (x,t,s)|\lesssim \|(x,t,s)\|^{-\gamma}.
$$
We need to show 
$$
\sum_{1< 2^{-k} \leq \ell (R)} \int_{\partial_{\rho(k)}R} |S_{I\!I} \chi_R(p)|^2\,d\mu(p) \lesssim \mu(R).
$$
It suffices to show $|S_{I\!I} \chi_R(p)|\lesssim 1$, for every $p\in \partial_{\rho(k)}R$ and every $1<2^{-k}\leq \ell(R)$.

Assume $\partial_{\rho(k)}R\neq \emptyset$ and fix $p=(0,x^0, t^0, s^0)\cdot (\phi(x^0, t^0, s^0),0,0,0) \in \partial_{\rho(k)}R$. Let 
\begin{align*}
\Phi_{p\mathbb{W}}(w)&=\Phi_{p\mathbb{W}}(x,t,s):=p\cdot \pi_\mathbb{W}(p^{-1} \cdot \Phi_\Sigma(w))\\
&=p\cdot \big(x-x^0, t-t^0- \sum_{i,j=2}^m C_{ij}^t x_i^0x_j + 2\sum_{i=2}^m C_{i1} (x_i-x_i^0)\phi(x^0, t^0, s^0), s-s^0 - \sum_{i,j=2}^m C_{ij}^s x_i^0x_j\big).
\end{align*}
Denote by $K_{I\!I}^*$ the kernel for the adjoint operator. By the annular boundedness
$$
\Big|\int_\mathbb{W} K_{I\!I} \big(\Phi_{p\mathbb{W}}(x,t,s)^{-1} \cdot p\big)\,dx\,dt\,ds\Big|\lesssim 1
.$$
Therefore,
\begin{align*}
|S_{I\!I}\chi_R(p)| &= \Big|\int_\Sigma K_{I\!I}(q^{-1}\cdot p)\,d\mu(q)\Big| \lesssim \Big|\int_{\mathbb{W}} K_{I\!I}\big(\Phi_\Sigma (x,t,s)^{-1}\cdot p\big) \sqrt{1+(\nabla^\phi \phi(x,t,s))^2}\,dx\,dt\,ds\Big|\\
&\lesssim 1+    \Big|\int_{\mathbb{W}}\Big( K_{I\!I}\big(\Phi_\Sigma (x,t,s)^{-1}\cdot p\big) \sqrt{1+(\nabla^\phi \phi(x,t,s))^2} -K_{I\!I} \big(\Phi_{p\mathbb{W}}(x,t,s)^{-1} \cdot p\big)\Big)\,dx\,dt\,ds\Big|\\
&\leq 1 + \int_\mathbb{W} \big|K_{I\!I}\big(\Phi_\Sigma(x,t,s)^{-1}\cdot p\big)\big|\cdot \Big|\sqrt{1+(\nabla^\phi \phi(x,t,s))^2} -1\Big|\,dx\,dt\,ds\\
&\quad + \int_\mathbb{W} \Big|K_{I\!I}^*\big(p^{-1}\cdot \Phi_\Sigma(x,t,s)\big) -K_{I\!I}^* \big(p^{-1}\cdot \Phi_{p\mathbb{W}}(x,t,s)\big)\Big|\,dx\,dt\,ds.
\end{align*}
Consequently:
\begin{align*}
|S_{I\!I}\chi_R(p)|&\lesssim 1 + \sum_{1\leq 2^{-j} < 2^{-k}} 2^{(Q-1)j} \int_{\Phi_\Sigma(x,t,s)\in B(p, 2^{1-j})\backslash B(p,2^{-2-j})} |\nabla^\phi \phi(x,t,s)|\,dx\,dt\,ds\\
&\quad + \sum_{1\leq 2^{-j} < 2^{-k}} \int_{\Phi_\Sigma(x,t,s)\in B(p, 2^{1-j})\backslash B(p,2^{-2-j})}\frac{\|\Phi_{p\mathbb{W}}(x,t,s)^{-1} \cdot \Phi_\Sigma(x,t,s)\|^{\beta/2}}{\|p^{-1}\cdot \Phi_\Sigma(x,t,s)\|^{Q-1+\beta/2}} \,dx\,dt\,ds\\
&\quad + \sum_{1\leq 2^{-j} < 2^{-k}} \int_{\Phi_{p\mathbb{W}}(x,t,s)\in B(p, 2^{1-j})\backslash B(p,2^{-2-j})}\frac{\|\Phi_{p\mathbb{W}}(x,t,s)^{-1} \cdot \Phi_\Sigma(x,t,s)\|^{\beta/2}}{\|p^{-1}\cdot \Phi_\Sigma(x,t,s)\|^{Q-1+\beta/2}} \,dx\,dt\,ds\\
&\lesssim 1 + \sum_{\substack{1\leq 2^{-j} < 2^{-k}\\ 2^{-3-j} \geq \|p\| \text{ or } 2^{2-j} \leq \|p\|}} 2^{(Q-1)j}\cdot 2^{j\gamma} \cdot 2^{-(Q-1)j} + \sum_{\substack{1\leq 2^{-j} < 2^{-k}\\ 2^{-3-j} <\|p\|, 2^{2-j}> \|p\|}} 2^{(Q-1)j} \cdot 1\cdot 2^{-(Q-1)j}\\
&\quad + \sum_{1\leq 2^{-j} < 2^{-k}} \int_{\Phi_\Sigma(x,t,s)\in B(p, 2^{1-j})\backslash B(p, 2^{-2-j})}\frac{|\phi^{(p^{-1})} (\pi_\mathbb{W}(p^{-1} \cdot \Phi_\Sigma(w)))|^{\beta/2}}{\|p^{-1}\cdot \Phi_\Sigma(x,t,s)\|^{Q-1+\beta/2}} \,dx\,dt\,ds\\
&\quad + \sum_{1\leq 2^{-j} < 2^{-k}} \int_{\Phi_{p\mathbb{W}}(x,t,s)\in B(p, 2^{1-j})\backslash B(p, 2^{-2-j})}\frac{|\phi^{(p^{-1})} (\pi_\mathbb{W}(p^{-1} \cdot \Phi_\Sigma(w)))|^{\beta/2}}{\|p^{-1}\cdot \Phi_\Sigma(x,t,s)\|^{Q-1+\beta/2}} \,dx\,dt\,ds\\
&\lesssim_\gamma 1 +\sum_{1\leq 2^{-j}<2^{-k}} 2^{-j(1-\theta)\beta/2} \cdot 2^{j(Q-1+\beta/2)}\cdot 2^{-(Q-1)j}\\
&= 1 + \sum_{1\leq 2^{-j}< 2^{-k}}2^{j\theta \beta/2} \lesssim_\theta 1.
\end{align*}
where we have used the fact that if $\Phi_{p\mathbb{W}} (x,t,s) \in B(p, 2^{1-j})\backslash B(p, 2^{-2-j})$, then
\begin{align*}
&\quad \big\|p^{-1}\cdot \Phi_\Sigma (x,t,s)\big\| = \big\|\pi_\mathbb{W}(p^{-1}\cdot \Phi_\Sigma (w))\cdot \phi^{(p^{-1})}(\pi_\mathbb{W}(p^{-1}\cdot \Phi_\Sigma (w)))\big\|\\
&=\big\|p^{-1} \cdot \Phi_{p\mathbb{W}} (x,t,s) \cdot \phi^{(p^{-1})}(p^{-1} \cdot \Phi_{p\mathbb{W}} (x,t,s))\big\| \sim 2^{-j},
\end{align*}
and $|\phi^{(p^{-1})}(\pi_\mathbb{W}(p^{-1}\cdot \Phi_\Sigma (w)))|\lesssim \|\pi_\mathbb{W}(p^{-1}\cdot \Phi_\Sigma(w))\|^{1-\theta}\lesssim 2^{-j(1-\theta)}$. Note the estimate used above
\begin{equation}\label{legitimate}
\Big|K_{I\!I}^*\big(p^{-1}\cdot \Phi_\Sigma(x,t,s)\big) -K_{I\!I}^* \big(p^{-1}\cdot \Phi_{p\mathbb{W}}(x,t,s)\big)\Big|\lesssim \frac{\|\Phi_{p\mathbb{W}}(x,t,s)^{-1} \cdot \Phi_\Sigma(x,t,s)\|^{\beta/2}}{\|p^{-1}\cdot \Phi_\Sigma(x,t,s)\|^{Q-1+\beta/2}} 
\end{equation}
is legitimate if $d(z_{1},z_{2}) \leq  \kappa \|z_{1}\|$ (assume without loss of generality that $\|z_1\|\geq \|z_2\|$). The condition $d(z_{1},z_{2}) \leq  \kappa \|z_{1}\|$ may not be the case to begin with, but then we know that $2^{-j} \lesssim \kappa \|z_{1}\| \leq d(z_{1},z_{2}) \lesssim 2^{-j}$, and we can pick boundedly many points $z_{1}',\ldots,z_{n}'$ with the following properties:  $z_{1}' = z_{1}$, $z_{n}' = z_{2}$, $\|z_{i}'\| \sim 2^{-j}$ and $2^{-j} \sim d(z_{i}',z_{i + 1}') \leq\kappa \|z_{i}'\|$; in particular $d(z_{i}',z_{i + 1}') \lesssim d(z_{1},z_{2})$. Then, we obtain (\ref{legitimate}) by the triangle inequality, and boundedly many applications of the H\"older estimates.

\begin{remark} Let $\G$ be a step-2 Carnot group equipped with a strongly homogeneous norm $\|\cdot\|$ and let $\mathbb{W}$ be a vertical hyperplane. Let $\alpha>0$ and let $\phi\in C^{1, \alpha}(\mathbb{W})$: i.e. there exists some $H>0$ such that
$$\|d\phi^{(p^{-1})}_w - d\phi^{(p^{-1})}_\0\|_{\text{op}} \leq H \|w\|^\alpha, \qquad \forall p \in \Sigma, w \in \bW,$$
where $\Sigma$ is the intrinsic graph of $\Sigma$. Furthermore, assume that there exist $\gamma>0, \theta \in (0,1),$  such that for every $w=(x,z)\in \mathbb{W}, p\in \Sigma$,
\begin{equation*}
|\phi^{(p^{-1})}(x,z)| \leq C_\theta \|(x,z)\|^{1-\theta}, \qquad |\nabla^\phi \phi (x,z)|\leq C_\gamma \|(x,z)\|^{-\gamma},
\end{equation*}
where $C_\gamma, C_\theta \geq 1$. Let $\mu$ be a $(Q-1)$-ADR measure with constant $C_\mu$, which is supported in $\Sigma$. Let $K$ be a $(Q-1)$-dimensional Calder\'on-Zygmund kernel which satisfies \eqref{czgrowth} and \eqref{czhol} with constant $C_K\geq 1$. Also, assume that $K$ and $K^{\ast}$ satisfy the annular boundedness condition with a constant $A$ (this constant depends on $\|\cdot\|$ and a fixed radial function $\psi$). It then follows by the proof of Theorem \ref{theoreminfinity} that the constants in \eqref{restricted} depend only on $H, \theta, \gamma, C_\theta, C_\gamma, C_\mu, C_K$ and $A$. It then follows by the $T1$ theorem that for all $f \in L^2(\mu), \ve>0$,
$$\|T_{\mu, \ve}f\|_{L^2(\mu)} \leq C \|f\|_{L^2(\mu)}$$
where $C$ only depends on the aforementioned constants.

If $K$ is horizontally antisymmetric then, by the proof of Lemma \ref{antisymmetry},  there is no dependance on $A$. Moreover, in the case when $\phi$ is compactly supported instead of the dependence on $\theta, \gamma, C_\theta, C_\gamma$ there is only dependance on the diameter of the $\supp \phi$.
\end{remark}

\section{Removable sets for Lipschitz harmonic functions and the $(Q-1)$-dimensional Riesz transform.}\label{sec:rem}
The goal of this section is to prove Theorem \ref{thm-removintro} and Corollary \ref{riesznonremovable}. 
We start with some definitions.
\begin{definition}\label{def:harm}
Let $\G$ be a Carnot group and let $D\subset \G$ be an open set. A real valued function $f:D\to\R$ is called $\Delta_\G$-harmonic, or simply harmonic, on $D$ 
if it is smooth and $\Delta_\G f= 0$ on $D$ in the distributional sense.
\end{definition}

The smoothness assumption in the previous definition is not essential, as $\Delta_G$-harmonic distributions are smooth due to the classical hypoellipticity theorem of Hörmander.

\begin{definition}
\label{def:remov} 
Let $\G$ be a Carnot group equipped with a homogeneous norm $\|\cdot\|$. A closed set $C \subset \G$ is called \textit{removable for Lipschitz $\Delta_\G$-harmonic functions} (or $\G$-RLH set), if for every domain $D \subset \G$ with every locally Lipschitz function $f:(D,\|\cdot\|) \to \R$
that is $\G$-harmonic in $D\stm C$, it is also $\Delta_\G$-harmonic in $D$.
\footnote{The reader might notice that our definition of $\G$-RLH sets is slightly different than the definition appearing  in \cite{CFO19, CMT} where $C$ has to be contained in the domain $D$. Our definition is a direct adaptation of the definition for Euclidean RLH sets appearing in \cite[Definition 2.1]{MR1372240} and \cite{ntv2}. Definition \ref{def:remov} immediately implies that Carnot RLH sets are monotone, in the sense that if $C, F$ are closed, $F$ is RLH and $C \subset F$, then $C$ is RLH as well. As far as we know, monotonicity is not immediate if one uses the definition from \cite{CFO19, CMT}. However, both definitions are equivalent and this follows from a characterization of RLH sets via a suitable capacity as in \cite{MR1372240}. This will appear in forthcoming work of Boone.}
\end{definition} 
For the following definition recall the $\G$-Riesz kernel (the horizontal gradient of the fundamental solution of the sub-Laplacian $\Delta_\G$) which was defined in Example \ref{griesz}. Recall also Lemma \ref{lem:rieszarecz}.
\begin{definition}
\label{def:riesztransform}
Let $\G$ be a Carnot group equipped with a homogeneous norm $\|\cdot\|$. Let $\mu$ be a positive $(Q-1)$--upper regular measure in $\G$. The truncated $(Q-1)$-dimensional Carnot Riesz transforms (with respect to $\|\cdot\|$) are defined as 
$$\mathcal{R}_{\mu,\varepsilon} f (p)=\int_{\|q^{-1}\cdot p\|>\varepsilon} \mathsf{R}(q^{-1}\cdot p) f(q) d \mu (q),$$
where $\varepsilon>0$, $p \in \G$ and $f \in L^2(\mu)$.
\end{definition}
For the convenience of the reader we restate Theorem \ref{thm-removintro}.
\begin{theorem}
\label{thm-remov}
Let $\G$ be a Carnot group equipped with a homogeneous norm $\|\cdot\|$. Let $\mu$ be a positive $(Q-1)$--upper regular measure in $\G$  such that  $\supp\mu$ has locally finite $(Q-1)$-dimensional Hausdorff measure. If the Riesz transform $\mathcal{R}$ is bounded on $L^2(\mu)$, then $\supp\mu$ is not removable for Lipschitz harmonic functions.
\end{theorem}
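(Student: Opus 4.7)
The plan is to follow the template of Mattila-Paramonov \cite{MR1372240} and its first-Heisenberg adaptation \cite{CFO19}: produce an explicit Lipschitz function $f$ on $\G$ which is $\Delta_\G$-harmonic on $\G \setminus \supp \mu$ but fails to be harmonic globally. Using that $\supp \mu$ has locally finite $\mathcal{H}^{Q-1}$ and that removability descends to closed subsets (the monotonicity noted in the footnote to Definition \ref{def:remov}), I would first reduce to the case where $\mu = \mu_0$ is compactly supported with positive total mass; the hypothesized $L^2(\mu)$-boundedness of $\mathcal{R}_\mu$ descends to $L^2(\mu_0)$-boundedness of $\mathcal{R}_{\mu_0}$ in the standard way. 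I then set
\[ f(p) := \int_\G \Gamma(q^{-1} \cdot p) \, d\mu_0(q), \]
the convolution of $\mu_0$ against the fundamental solution $\Gamma$. Since $\Delta_\G \Gamma = -\delta_{\0}$ and $\mu_0$ is compactly supported, $f$ is a tempered distribution with $\Delta_\G f = -\mu_0$; in particular $f$ is smooth and $\Delta_\G$-harmonic on $\G \setminus \supp \mu_0$ and cannot be harmonic on any open set meeting $\supp \mu_0$. The problem thus collapses to showing that $f$ is Lipschitz with respect to $\|\cdot\|$.

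Lipschitzness of $f$ is equivalent to a uniform bound on the horizontal gradient
\[ \nabla_\G f(p) = \int_\G \mathsf{R}(q^{-1}\cdot p)\, d\mu_0(q) \]
on $\G \setminus \supp \mu_0$. For $p$ with $r := d(p, \supp \mu_0) > 0$, the direct estimate using $|\mathsf{R}(q)| \lesssim \|q\|^{1-Q}$ combined with the $(Q-1)$-upper regularity of $\mu_0$ (summed over dyadic annuli) produces a $p$-independent tail bound for the part of the integral supported outside $B(p, r)$. For the contribution near $\supp \mu_0$, I would pick $p' \in \supp \mu_0$ with $d(p,p') = r$ and compare $\nabla_\G f(p)$ with $\mathcal{R}_{\mu_0, r} 1(p')$: the difference splits into an integral over an annulus $B(p', Cr) \setminus B(p', r/C)$ (whose $\mu_0$-mass is $\lesssim r^{Q-1}$ against a kernel of size $\lesssim r^{1-Q}$, giving a bounded contribution) plus a far-field term controlled by the H\"older estimate of Lemma \ref{l:standardHolder}. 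It therefore suffices to bound $\mathcal{R}_{\mu_0, \varepsilon} 1$ uniformly $\mu_0$-almost everywhere as $\varepsilon \to 0^+$.

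The main obstacle, and the single step where a genuinely new argument is required relative to \cite{CFO19}, is precisely this \emph{Cotlar-to-pointwise upgrade}: passing from the hypothesized $L^2(\mu_0)$-boundedness of $\mathcal{R}_{\mu_0}$ to a uniform pointwise (or $\mu_0$-a.e.) bound on the truncations $\mathcal{R}_{\mu_0, \varepsilon} 1$. In $\mathbb{H}^1$, \cite{CFO19} can exploit the explicit Folland formula for $\Gamma$ in the Kor\'anyi algebra to carry out this step by direct computation. In a general Carnot group no such closed-form expression for $\Gamma$ is available, so I would instead invoke the abstract Cotlar/Nazarov-Treil-Volberg machinery for Calder\'on-Zygmund operators on doubling homogeneous metric measure spaces, using only (i) the Calder\'on-Zygmund properties of $\mathsf{R}$ and $\mathsf{R}^\ast$ established via Proposition \ref{prop:homkernels} and Lemma \ref{l:standardHolder}, (ii) the $(Q-1)$-upper regularity of $\mu_0$, and (iii) the hypothesized $L^2(\mu_0)$-boundedness. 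Once this pointwise bound is in hand, the rest of the argument (extending $\nabla_\G f$ across the negligible set $\supp \mu_0$ and integrating along horizontal curves to convert the gradient bound into a Lipschitz estimate with respect to $\|\cdot\|$) transcribes from \cite{CFO19} with only notational changes.
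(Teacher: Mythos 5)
There is a genuine gap at the heart of your plan: you take $f = \Gamma \ast \mu_0$ directly and reduce everything to a uniform $\mu_0$-a.e.\ bound on the truncations $\mathcal{R}_{\mu_0,\varepsilon}1$, which you hope to extract from the $L^2(\mu_0)$-boundedness via a ``Cotlar-to-pointwise upgrade.'' No such upgrade exists: Cotlar/Nazarov--Treil--Volberg machinery gives control of the maximal truncated operator in $L^2(\mu_0)$ or in weak-$L^1$, and at best $T1 \in BMO$-type information, never a uniform $L^\infty$ bound on $\mathcal{R}_{\mu_0,\varepsilon}1$. Consequently $\nabla_\G f$ need not be bounded and $f = \Gamma\ast\mu_0$ need not be Lipschitz; this is exactly the obstruction that forces the classical Uy/Mattila--Paramonov scheme (followed by the paper) to insert a dualization step. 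The paper does not use $\mu$ itself: it first shows the smoothened adjoint operators $\widetilde{\mathcal{R}}^{i,\ast}_{\varepsilon}$ map $\mathcal{M}(\G)$ into $C_0(\G)$ and are uniformly bounded from $\mathcal{M}(\G)$ to $L^{1,\infty}(\mu)$ (Nazarov--Treil--Volberg), and then applies the Davie--\O{}ksendal lemma to produce, for each $\varepsilon$, a function $h_\varepsilon$ with $0\le h_\varepsilon \le \chi_E$, $\int h_\varepsilon\,d\mu \ge \mu(E)/2$, and $\|\widetilde{\mathcal{R}}^i_{\mu,\varepsilon}h_\varepsilon\|_\infty \lesssim 1$. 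Only the potential of the weak-$*$ limit measure $\nu = h\,d\mu$ (not of $\mu$ itself) is shown to be Lipschitz and non-harmonic. Your proposal omits this entire mechanism, and the step you propose in its place is false as stated.

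A secondary misdiagnosis: you locate the ``genuinely new'' difficulty relative to \cite{CFO19} in the passage from $L^2$ to pointwise bounds, but that part of the argument transfers essentially verbatim (it never used an explicit formula for $\Gamma$). The new input in the general step-$2$ (indeed arbitrary-step) setting is the Lipschitz extension across $\supp\mu$: one needs that a continuous function which is $C^1$ off a set of locally finite $\mathcal{H}^{Q-1}$ measure with bounded horizontal gradient is globally Lipschitz (Lemma \ref{lem:liphorizgrad}), and the proof of that requires the horizontal polygonal quasiconvexity of general Carnot groups (Lemma \ref{lem:hpq}), replacing the Heisenberg-specific argument of \cite{CFO19}. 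Your final sentence assumes this extension ``transcribes with only notational changes,'' which is precisely the step the paper had to prove anew.
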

The proof of Theorem \ref{thm-removintro} is very similar to the proof of \cite[Theorem 5.1]{CFO19}, which adapted a well-known scheme of Uy \cite{uy} (see also \cite[Theorem 4.4]{MR1372240} and \cite[Chapter 4]{tolsabook}) to the first Heisenberg group. Almost every step in the proof of \cite[Theorem 5.1]{CFO19} can be transferred to our case without problems after some obvious modifications. The only step  which requires a different argument lies in the proof of the following lemma; this is the generalization of \cite[Lemma 5.4]{CFO19} in step-2 Carnot groups.
\begin{lemma}
\label{lem:liphorizgrad}
Let $E\subset \mathbb{G}$ be a set of locally finite $(Q-1)$-dimensional measure and let $f: \mathbb{G}\to \mathbb{R}$ be continuous. If $f\in C^1(\mathbb{G}\backslash E)$ and $\nabla_{\mathbb{G}} f\in L^\infty(\mathbb{G}\backslash E)$, then $f$ is Lipschitz on $\mathbb{G}$.
\end{lemma}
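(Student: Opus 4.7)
\medskip

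The strategy is to show $f$ is Lipschitz with respect to the homogeneous distance $d$ on $\G$ with constant $L := \|\nabla_\G f\|_\infty$, by integrating $\nabla_\G f$ along carefully chosen horizontal curves and exploiting the smallness of the singular set $E$. Since $\mathcal{H}^{Q-1}(E)$ is locally finite in the homogeneous metric and the Haar measure on $\G$ coincides up to a constant with $\mathcal{H}^Q$, we have $\mathcal{L}^N(E)=0$. Thus $\nabla_\G f$ extends by zero on $E$ to an $L^\infty(\G)$-function.

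Fix $p,q\in \G$ and, for $\epsilon>0$, a horizontal curve $\gamma\colon [0,1]\to \G$ from $p$ to $q$ of length $\ell\le d(p,q)+\epsilon$ (such a curve exists by Chow-Rashevskii). Consider the family of left translates $\gamma_s:=L_s\circ \gamma$ parametrized by $s$ in a neighborhood $U$ of the identity. Since horizontality is left-invariant, each $\gamma_s$ is horizontal of length $\ell$, joining $sp$ to $sq$, and the map $\Phi\colon U\times [0,1]\to \G$, $\Phi(s,t):=s\,\gamma(t)$, is a smooth submersion. Fubini applied to $\Phi$, combined with $\mathcal{L}^N(E)=0$, shows that for $\mathcal{L}^N$-a.e.\ $s\in U$, $\gamma_s^{-1}(E)\subset [0,1]$ has one-dimensional Lebesgue measure zero. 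A finer slicing argument of Eilenberg type, applied to the projection $(s,t)\mapsto s$ restricted to $\Phi^{-1}(E)$ and using $\mathcal{H}^{Q-1}(E\cap K)<\infty$ for compact $K\subset \G$, upgrades this to: for $\mathcal{L}^N$-a.e.\ $s\in U$, the set $\gamma_s^{-1}(E)$ is at most countable.

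For such a ``good'' $s$, the continuous function $F_s:=f\circ \gamma_s\colon [0,1]\to \R$ has classical derivative $F_s'(t)=\nabla_\G f(\gamma_s(t))\cdot \gamma_s'(t)_h$ existing and bounded by $L\,|\gamma'(t)|_h$ on $[0,1]\setminus \gamma_s^{-1}(E)$. A standard real-analysis lemma—a continuous function on $[0,1]$ with classical derivative bounded by $M$ off a countable set is $M$-Lipschitz, proved by covering the exceptional points by a disjoint collection of small intervals on which $F_s$ has controlled oscillation (by continuity), then applying the mean value theorem on the complementary closed intervals—yields $|F_s(1)-F_s(0)|\le L\,\ell$, i.e.\ $|f(sp)-f(sq)|\le L\,\ell$. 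Since the good set of $s$ has full $\mathcal{L}^N$-measure in $U$, it is dense, so we can choose good parameters $s_n\to e$; by continuity of $f$, $f(s_np)\to f(p)$ and $f(s_nq)\to f(q)$, hence $|f(p)-f(q)|\le L\,\ell\le L(d(p,q)+\epsilon)$. Letting $\epsilon\to 0$ gives the desired Lipschitz bound.

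The main obstacle is the slicing step that upgrades ``$\mathcal{L}^1$-null intersection'' to ``at most countable intersection'' of $\gamma_s$ with $E$, for a.e.\ $s$. This requires an Eilenberg-type inequality for the smooth submersion $\Phi$ combined with the Hausdorff measure bound on $E$, carried out carefully in view of the fact that the relevant $(Q-1)$-measure is taken with respect to the homogeneous metric rather than a Euclidean one. Once the countability slice is secured, the passage through the continuity of $f$ and the elementary lemma on functions with bounded derivative off a countable set is routine.
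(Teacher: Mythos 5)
Your overall architecture — slice $E$ by a family of horizontal curves, show that almost every curve in the family meets $E$ in a set small enough to run a fundamental-theorem-of-calculus argument, then use continuity of $f$ and density of good parameters — is the same as the paper's. But the step you yourself flag as the main obstacle, the ``Eilenberg-type'' upgrade from null to at most countable intersections, is a genuine gap, and as formulated it does not go through. If you run Eilenberg for the projection $(s,t)\mapsto s$ restricted to $\Phi^{-1}(E)$ with \emph{Euclidean} Hausdorff measures, getting $\mathcal{H}^{0}$-finite slices for a.e.\ $s$ requires local finiteness of $\mathcal{H}^{N}_{\mathrm{Eucl}}(\Phi^{-1}(E))$, i.e.\ essentially of $\mathcal{H}^{N-1}_{\mathrm{Eucl}}(E)$; but local finiteness of $\mathcal{H}^{Q-1}$ in the homogeneous metric does not imply this. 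By the dimension comparison between the homogeneous and Euclidean metrics (Balogh--Rickly--Serra Cassano type estimates), a set with locally finite $(Q-1)$-dimensional homogeneous measure can have Euclidean Hausdorff dimension strictly larger than $N-1$ (in $\mathbb{H}^{1}$, up to $5/2>2$), so $\mathcal{H}^{N-1}_{\mathrm{Eucl}}(E)$ may be infinite. If instead you try to run the slicing with the homogeneous metric, the map $\Phi(s,t)=s\cdot\gamma(t)$ is only H\"older-$\tfrac12$ in the translation parameter $s$ (conjugation by the bounded points $\gamma(t)$ produces vertical components of size $\sim\|s^{-1}s'\|^{1/2}$), so the Eilenberg inequality does not close at the exponent $Q-1$. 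A secondary problem: a Chow--Rashevskii near-geodesic is only absolutely continuous, so $f\circ\gamma_{s}$ has the claimed derivative only a.e.\ in $t$, and your real-analysis lemma genuinely needs a countable (not merely null) exceptional set — the Cantor function shows why.

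The paper avoids both problems by slicing only along \emph{horizontal lines}: for each fixed horizontal direction, the family of lines is parametrized by the vertical hyperplane $\mathbb{W}$, the lines are exactly the fibers of $\pi_{\mathbb{W}}$, and Lemma \ref{lem:1CFO} (which rests on the Franchi--Serapioni measure estimate for $\pi_{\mathbb{W}}$) gives, at exponent $s=Q-1$, that a.e.\ line meets $E$ in a finite set. Arbitrary pairs of points are then reduced to this case by Lemma \ref{lem:hpq}: any $p_{1},p_{2}$ can be joined by a bounded number of horizontal line segments of total length $\lesssim d(p_{1},p_{2})$, and one perturbs each segment to a generic line of the same direction, integrates $\nabla_{\G}f$ along each piece, and uses continuity of $f$ to absorb the perturbations — exactly the role your translation parameter $s$ was meant to play, but with a slicing lemma that is actually available. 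To repair your argument you would need to either prove a co-area/Eilenberg estimate for the left-translate family of a general horizontal curve with respect to the homogeneous measure (this is not routine and is precisely the content that Lemma \ref{lem:1CFO} supplies in the line case), or replace your near-geodesic by a horizontal polygonal path, at which point you have reconstructed the paper's proof.
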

The proof of Lemma \ref{lem:liphorizgrad} employs two auxillary results. First:
\begin{lemma}
\label{lem:1CFO}
Fix a vertical hyperplane $\mathbb{W}$, and let $E\subset \mathbb{G}$. Then
$$
\int_\mathbb{W}^* \mathcal{H}^{s-Q+1} (E\cap \pi_\mathbb{W}^{-1}(w)) \,d\mathcal{L}^{m+n+k-1}(w) \lesssim \mathcal{H}^s(E), \quad \forall s\in[Q-1,Q],
$$
where the notation $\int^*_\mathbb{W}$ means it take value $0$ when it is the integral diverges.
\end{lemma}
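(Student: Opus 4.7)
\medskip

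\noindent\textbf{Proof proposal.} The plan is to prove a standard Eilenberg/Fubini-type inequality adapted to the Carnot setting, based on an efficient covering by Carnot balls and two geometric facts about the projection $\pi_{\mathbb{W}}$. Namely, a ball $B = B(p, r)$ in the Carnot metric has, in exponential coordinates, extent $\sim r$ in each of the $m$ horizontal directions and $\sim r^2$ in each of the $n+k$ vertical directions; so its image under $\pi_{\mathbb{W}}$ (which discards one horizontal coordinate, $x_1$) satisfies
\[
\mathcal{L}^{N-1}(\pi_{\mathbb{W}}(B)) \lesssim r^{m-1}\cdot (r^{2})^{n+k} = r^{Q-1}.
\]
Secondly, each fiber $\pi_{\mathbb{W}}^{-1}(w)$ is a coset of the one-parameter horizontal subgroup $\mathbb{V}=\exp(\mathbb{R}X_1)$, and on such horizontal curves the Carnot metric coincides up to a uniform constant with the restricted Euclidean metric. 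In particular, $B\cap \pi_{\mathbb{W}}^{-1}(w)$ is an interval of $1$-dimensional diameter $\lesssim r$ with respect to the induced Hausdorff measure on the fiber, so it is admissible for computing $\mathcal{H}^{s-Q+1}$ on $\pi_{\mathbb{W}}^{-1}(w)$.

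Given $\delta>0$ and $\varepsilon>0$, I would fix a cover $\{B_i\}=\{B(p_i,r_i)\}$ of $E$ by Carnot balls with $r_i<\delta$ and
\[
\sum_i r_i^{s} \leq \mathcal{H}^{s}_{C\delta}(E) + \varepsilon,
\]
for a suitable universal $C$ coming from the two observations above. For each $w\in\mathbb{W}$ the sets $B_i\cap \pi_{\mathbb{W}}^{-1}(w)$ cover $E\cap \pi_{\mathbb{W}}^{-1}(w)$, and each has $1$-dimensional diameter $\lesssim r_i$. Using these as a cover, one obtains
\[
\mathcal{H}^{s-Q+1}_{C\delta}\!\bigl(E\cap \pi_{\mathbb{W}}^{-1}(w)\bigr) \lesssim \sum_{i} \chi_{\pi_{\mathbb{W}}(B_i)}(w)\, r_i^{s-Q+1}.
\]
Integrating against $\mathcal{L}^{N-1}$ and using the projection bound gives
\[
\int_{\mathbb{W}}^{*}\mathcal{H}^{s-Q+1}_{C\delta}\!\bigl(E\cap \pi_{\mathbb{W}}^{-1}(w)\bigr)\,d\mathcal{L}^{N-1}(w)\lesssim \sum_{i} r_i^{Q-1}\cdot r_i^{s-Q+1}=\sum_i r_i^{s}\leq \mathcal{H}^{s}_{C\delta}(E)+\varepsilon.
\]
Since $s-Q+1\in[0,1]$ whenever $s\in[Q-1,Q]$, the exponent in the middle estimate is non-negative, so the bookkeeping is clean.

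Finally, letting $\varepsilon\to 0$ and then $\delta\to 0$, one uses that $\mathcal{H}^{s-Q+1}_{C\delta}$ increases to $\mathcal{H}^{s-Q+1}$ as $\delta\to 0$ and applies monotone convergence (phrased with upper integrals, which is precisely what the $\int^{*}_{\mathbb{W}}$ notation accommodates) to conclude
\[
\int_{\mathbb{W}}^{*}\mathcal{H}^{s-Q+1}\!\bigl(E\cap \pi_{\mathbb{W}}^{-1}(w)\bigr)\,d\mathcal{L}^{N-1}(w)\lesssim \mathcal{H}^{s}(E).
\]

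The proof is essentially routine; the two genuine points to verify carefully are the bi-Lipschitz equivalence of the Carnot and Euclidean metrics along a horizontal fiber (so that $\mathcal{H}^{1}$ on the fiber is computed using intervals of length $\sim r_i$) and the Euclidean volume estimate $\mathcal{L}^{N-1}(\pi_{\mathbb{W}}(B(p,r)))\lesssim r^{Q-1}$, both of which follow immediately from the box-ball comparison in exponential coordinates established in Section~\ref{sec:carnot}. The main (mild) obstacle is purely notational: handling the non-measurability of the fiber function $w\mapsto \mathcal{H}^{s-Q+1}(E\cap \pi_{\mathbb{W}}^{-1}(w))$, which is precisely why the upper integral $\int^{*}_{\mathbb{W}}$ appears in the statement.
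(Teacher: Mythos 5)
Your proposal is the same Eilenberg/Fubini-type covering argument that the paper uses: the paper proves this lemma by observing it is identical to \cite[Lemma 5.3]{CFO19}, whose proof is exactly your scheme (cover $E$ by balls $B(p_i,r_i)$, note that $B_i\cap\pi_{\mathbb{W}}^{-1}(w)$ has $d$-diameter $\lesssim r_i$, bound $\mathcal{L}^{N-1}(\pi_{\mathbb{W}}(B_i))\lesssim r_i^{Q-1}$, and integrate), with the projection estimate supplied by \cite[Lemma 2.2]{Franchi-Serapioni-JGA}. So in outline the argument is correct and matches the paper.

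One step, however, is justified too glibly, and it is precisely the geometric crux. The projection $\pi_{\mathbb{W}}$ is \emph{not} the coordinate map that discards $x_1$: writing $p=(x_1,x',z)$ and $p=p_{\mathbb{W}}\cdot p_{\mathbb{V}}$, one gets $\pi_{\mathbb{W}}(p)=\bigl(0,\,x',\,z-\tfrac{x_1}{2}[x'\cdot X',X_1]\bigr)$, so the vertical coordinates are sheared by an amount depending on $x_1$ and $x'$. Consequently $\pi_{\mathbb{W}}(B(p,r))$ is in general \emph{not} contained in a coordinate box of side $\sim r$ in the horizontal directions and $\sim r^2$ in the vertical ones (its vertical extent in the $T$-directions can be of order $\|p\|\,r$), so the bound $\mathcal{L}^{N-1}(\pi_{\mathbb{W}}(B(p,r)))\lesssim r^{Q-1}$ does not ``follow immediately from the box-ball comparison.'' It is nevertheless true: e.g.\ use $\pi_{\mathbb{W}}(p\cdot q)=p_{\mathbb{W}}\cdot\pi_{\mathbb{W}}(p_{\mathbb{V}}\cdot q)$ and the fact that left translation by $p_{\mathbb{W}}$ preserves $\mathcal{L}^{N-1}$ on $\mathbb{W}$, then observe that for fixed $x_1$ the remaining map on the ball is a measure-preserving shear in the vertical variables, so the image is a sheared box of measure $\lesssim r^{m-1}(r^2)^{n+k}=r^{Q-1}$; alternatively simply invoke \cite[Lemma 2.2]{Franchi-Serapioni-JGA}, which is exactly the ingredient the paper cites and is valid in all Carnot groups. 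Separately, your bi-Lipschitz comparison of the fiber metric with the ambient Euclidean metric is both unnecessary and delicate (the comparison constant degrades with the basepoint): since $\mathcal{H}^{s-Q+1}$ is taken with respect to the homogeneous metric, all you need is the trivial fact that $B(p_i,r_i)\cap\pi_{\mathbb{W}}^{-1}(w)$ has $d$-diameter at most $2r_i$. With these two points repaired, your argument goes through as written.
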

The proof of Lemma \ref{lem:1CFO} is identical to the proof of \cite[Lemma 5.3]{CFO19}. We record that the proof in \cite[Lemma 5.3]{CFO19} uses \cite[Lemma 2.2]{Franchi-Serapioni-JGA} which is valid in all Carnot groups.

Second, the proof of Lemma \ref{lem:liphorizgrad} makes use of the fact that Carnot groups are ``horizontally polygonally quasiconvex''. This means that any two points in the group can be connected by segments of \textit{horizontal lines} and the union of these segments has length bounded by a constant multiple of the distance between these two points.

Recall that a \textit{horizontal line} in $\G$ is a left coset of an $1$-dimensional horizontal subgroup of $\G$. Equivalently, horizontal lines can be defined as sets of  the form $\pi_\mathbb{W}^{-1}(w)$ where $\mathbb{W}$ is a vertical hyperplane and $w\in \mathbb{W}$. 

The fact that the first Heisenberg group is horizontally polygonally quasiconvex was proved in \cite[Lemma 5.2]{CFO19}. We will now prove that any Carnot group satisfies the same property.

\begin{lemma}
\label{lem:hpq}
Let $\mathbb{G}$ be a Carnot group of step $s$ whose first layer has dimension $m$. There exist $N:=N(m,s)$ and $C:=C(m,s)$  such that for any $p_1, p_2\in \mathbb{G}$, there exist at most $N$ horizontal line segments $l_i$ with connected union, containing $p_1, p_2$, and 
$$
\sum \mathcal{H}^1(l_j) \leq C d(p_1, p_2).
$$
\end{lemma}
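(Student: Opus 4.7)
The plan is to reduce, via left-invariance and homogeneity, to proving that every point in the closed unit Carnot--Carath\'eodory ball around $\mathbf{0}$ is reachable from $\mathbf{0}$ by a polygonal path of at most $N(m,s)$ horizontal segments whose total length is bounded by $C(m,s)$. The reductions are legitimate because left translations map horizontal lines to horizontal lines (they are left cosets of one-dimensional horizontal subgroups) and preserve CC-lengths, while dilations $\delta_\lambda$ map horizontal lines to horizontal lines and scale CC-lengths by $|\lambda|$. So I may assume $p_1=\mathbf{0}$ and $\|p_2\|_{cc}\leq 1$.

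For the reduced statement, I build the path in two stages. Write $p_2 = \exp(X+Y)$ with $X\in\mathfrak v_1$ and $Y\in \mathfrak v_2\oplus\cdots\oplus\mathfrak v_s$. First travel along the single horizontal segment $t\mapsto \exp(tX)$, $t\in[0,1]$, reaching $\exp(X)$; this one segment has CC-length $\|X\|_{cc}\lesssim \|p_2\|_{cc}\leq 1$. By left-invariance of horizontal lines, travelling from $\exp(X)$ to $\exp(X+Y)$ is equivalent to travelling from $\mathbf{0}$ to $q:=\exp(X)^{-1}\cdot \exp(X+Y)$; by BCH the element $q$ has vanishing first-layer coordinates and still satisfies $\|q\|_{cc}\lesssim 1$. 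Thus it remains to reach an arbitrary element of $\exp(\mathfrak v_2\oplus\cdots\oplus\mathfrak v_s)$ inside a CC-ball of bounded radius via horizontal segments.

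To reach the higher layers, I use commutator words. Fix a basis $X_1,\dots,X_m$ of $\mathfrak v_1$. For $1\leq i<j\leq m$ and $c\geq 0$ the four-segment loop
\begin{displaymath}
\exp(\sqrt{c}\,X_i)\cdot \exp(\sqrt{c}\,X_j)\cdot \exp(-\sqrt{c}\,X_i)\cdot \exp(-\sqrt{c}\,X_j)
\end{displaymath}
has CC-length $4\sqrt{c}$ and equals $\exp(c\,[X_i,X_j])$ \emph{exactly} in step $2$ (BCH collapses, since $[A,-A]=0$ for $A=\sqrt{c}(X_i+X_j)$ and higher brackets vanish). Since $\{[X_i,X_j]\}_{i<j}$ spans $\mathfrak v_2$, and since in step $2$ the exponentials of second-layer elements commute, concatenating at most $4\binom{m}{2}$ such loops realizes every $q\in \exp(\mathfrak v_2)$ by a polygonal horizontal path whose length is comparable to $\|q\|_{cc}$. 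Combined with the first-layer segment, this proves the claim in the step-$2$ case with $N\leq 1+4\binom{m}{2}$ and an explicit constant $C$. For general step one iterates: having realized $\exp(\mathfrak v_1\oplus\cdots\oplus\mathfrak v_k)$ by bounded polygonal words, use nested commutator words of $\exp(a X_i)$ against those already constructed to reach $\mathfrak v_{k+1}$, with the length $a$ chosen to match the anisotropic scaling of that layer ($a\sim \|\cdot\|_{cc}^{1/(k+1)}$).

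The main obstacle in the general-step case is controlling the CC-length of the nested commutator words and proving surjectivity onto a fixed CC-ball: in step $\geq 3$, the BCH formula does not close, so a loop only approximates the desired higher-layer element up to lower-order corrections that must be iteratively corrected. This is the quantitative form of the Chow--Rashevskii/ball-box theorem and can be carried out by viewing the composed exponential map $\Phi(t_1,\dots,t_N) = \prod_j \exp(t_j X_{i_j})$ as an anisotropic submersion at $\mathbf 0$, for a well-chosen sequence $(i_j)$, and applying the inverse-function theorem in coordinates weighted by the stratification. In the step-$2$ setting the above explicit computation bypasses this obstacle entirely.
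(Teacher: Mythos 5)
Your reduction to the unit ball via left translations and dilations is legitimate, and your step-$2$ argument is correct and essentially the paper's own proof: one horizontal segment takes care of the first-layer component, and each second-layer coordinate is produced by a four-segment loop $\exp(\sqrt{c}X_i)\exp(\sqrt{c}X_j)\exp(-\sqrt{c}X_i)\exp(-\sqrt{c}X_j)=\exp(c[X_i,X_j])$ (the paper's ``lifted squares''), giving at most $1+4\binom{m}{2}$ segments of total length $\lesssim d(p_1,p_2)$. (Minor slip: you need at most $\binom{m}{2}$ loops, i.e.\ $4\binom{m}{2}$ segments; your final count has it right.)

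The genuine gap is the case of step $s\geq 3$, which the lemma asserts and which the paper needs, since Theorem \ref{thm-remov} and Lemma \ref{lem:liphorizgrad} are invoked for general Carnot groups. You defer this case to a ``quantitative Chow--Rashevskii/ball-box'' statement proved by a weighted inverse function theorem, but that is not carried out, and the mechanism as described does not work directly: the differential at the origin of $\Phi(t_1,\dots,t_N)=\prod_j\exp(t_jX_{i_j})$ is $\sum_j t_jX_{i_j}$, which lies in $\mathfrak{v}_1$, so $\Phi$ is not a submersion at $\mathbf 0$ and any IFT-type argument must work at base points where the differential is onto (or use an open-mapping/degree argument plus homogeneity); moreover, standard ball-box statements do not hand you the specific conclusion ``at most $N(m,s)$ segments of total length $\leq C(m,s)\,d(p_1,p_2)$'' without extra work. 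The paper avoids all of this with an elementary iteration that is the natural completion of your own sketch: after matching layers $1,\dots,k$, the remaining discrepancy in layer $k+1$ has homogeneous size $O(d^{k+1})$ and is removed by boundedly many nested group-commutator words of side length $O(d)$ (squares for layer $2$, ten-segment words for layer $3$, and so on), whose side effects land only in layers $\geq k+2$ with again the correct homogeneous size; after $s-1$ rounds one is done, with an explicit count $1+4m^2+10m^3+\cdots+(3\cdot 2^{s-1}-2)m^s$ of segments. You should either carry out this layer-by-layer correction (no inverse function theorem is needed) or restrict your claim to step $2$.
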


\begin{proof}
For step $1$ group, this is trivial. We first prove for the case where $\mathbb{G}$ is of step $2$. Since all strongly homogeneous metrics are comparable, without loss of generality, we can work with 
$$d(p_1, p_2) = \|p_2^{-1}\cdot p_1\|$$
where $\|\cdot\|$ is the strongly homogeneous norm
$$
\|q\|= |x| + |z|^{1/2},
$$
where $q=(x,z)$ with $x$ denoting the first layer, $z$ denoting the second layer and $|\cdot|$ denoting the Euclidean metric. By left invariance we can assume that $p_1=\zero$. Let $p_2=(x_2, z_2)$. We first connect $\zero$ to $(x_2, 0)$ by a horizontal line segment of length $\leq d(p_1, p_2)$. We will now describe how to connect from $(x_2, 0)$ to $(x_2, z_2)$ with horizontal line segments. By left invariance, it is enough to elaborate how we can connect $(0,0)$ to $(0,z_2)$ using horizontal line segments. Let $n:=\frac{Q-m}{2}$ and write $z_2=(z_{2,1}, \ldots, z_{2,n})$. Since each $Z_j$ is a linear combination of the $[X_i, X_k]$, we have
\begin{equation}\label{squares}
\begin{aligned}
\sum_{j=1}^n z_{2,j}Z_j &= \sum_{j=1}^n z_{2,j} \sum_{i,k=1}^m C_{j,i,k} [X_i, X_k]\\
&=\sum_{i,k=1}^m \big(\sum_{j=1}^n z_{2,j}C_{j,i,k}\big) [X_i, X_k].
\end{aligned}
\end{equation}
For each $(i,k)$, if $\sum_j z_{2,j} C_{j,i,k}>0$, starting from any given point, consider a ``lifted square'', i.e. $4$ connecting horizontal line segments subsequently traveling with unit time along 
$$
\sqrt{\big|\sum_j z_{2,j}C_{j,i,k}\big|} X_i, \quad \sqrt{\big|\sum_j z_{2,j}C_{j,i,k}\big|} X_k, \quad -\sqrt{\big|\sum_j z_{2,j}C_{j,i,k}\big|} X_i, \quad -\sqrt{\big|\sum_j z_{2,j}C_{j,i,k}\big|} X_k.
$$
If $\sum_j z_{2,j} C_{j,i,k}<0$, starting from any given point, consider a lifted square with $4$ sides subsequently traveling with unit time along 
$$
\sqrt{\big|\sum_j z_{2,j}C_{j,i,k}\big|} X_k, \quad \sqrt{\big|\sum_j z_{2,j}C_{j,i,k}\big|} X_i, \quad -\sqrt{\big|\sum_j z_{2,j}C_{j,i,k}\big|} X_k, \quad -\sqrt{\big|\sum_j z_{2,j}C_{j,i,k}\big|} X_i.
$$
Each lifted square has total length of sides $\lesssim |z_2|^{1/2}\lesssim d(p_1, p_2)$. To connect $(0,0)$ to $(0, z_2)$ with horizontal line segments, starting from $(0,0)$, we travel along the lifted squares one by one, each corresponding to a nonzero term $(\sum_j z_{2,j}C_{j,i,k})[X_i, X_k]$ in (\ref{squares}); the order of the lifted squares does not matter. Since there are $\binom{m}{2}$ nonzero terms in (\ref{squares}), there are $\binom{m}{2}$ lifted squares and the total length of line segments $\lesssim d(p_1,p_2)+4\binom{m}{2} d(p_1,p_2) \lesssim d(p_1, p_2)$. 

For $\mathbb{G}$ of step $s>2$, we need to connect from $p_1=(0,0,\ldots, 0)$ to $p_2=(w_1,w_2, \ldots, w_s)$, where the $w_i$ are $i$-th layer coordinates. We first connect from $(0,0,\cdots, 0)$ to $(w_1, 0, \ldots, 0)$, which needs $1$ horizontal line with length $=|w_1|\lesssim \|w\|=d(p_1, p_2)$. We then connect from $(w_1, 0,0,\ldots, 0)$ to $(w_1, 0,0,\ldots, 0)\cdot(0, w_2,0, \ldots, 0)=(w_1, w_2, w_3',0,\ldots,0)$, where $|w_3'-w_3|\lesssim d(p_1, p_2)^3$. (Similar as above in the step $2$ case, the $[X_i, X_k]$ correspond to squares, the $[[X_i, X_k], X_j]$ correspond to decagons, etc.) This needs no more than $\binom{m}{2}$ squares and thus $4\binom{m}{2} \leq 4m^2$ horizontal lines, with total length $\lesssim d(p_1, p_2)$. We then connect from $(w_1, w_2, w_3', 0,\ldots, 0)$ to $(w_1, w_2, w_3', 0,\ldots, 0)\cdot (0, 0, w_3-w_3', 0,\ldots, 0) =(w_1, w_2, w_3, w_4', w_5', 0,\ldots, 0)$, where $|w_4'-w_4|\lesssim d(p_1, p_2)^4, |w_5'-w_5|\lesssim d(p_1, p_2)^5$ (if there are $4$th and $5$th layers). This needs no more than $m^3$ decagons and thus $10 m^3$ horizontal lines, with total length $\lesssim d(p_1,p_2)$. We then continue similarly until we reach $(w_1, \ldots, w_s)$. Thus for the Carnot group $\mathbb{G}$ of step $s$, if the first layer of $\mathbb{G}$ is of dimension $m$, the number of connected horizontal line segments $l_i$ needed to contain $p_1$ and $p_2$ is no more than $1+ 4m^2 + 10m^3+ 22m^4+\cdots +(3\cdot 2^{s-1}-2)m^s$, and $\sum \mathcal{H}^1(l_j) \lesssim d(p_1, p_2)$.
\end{proof}

With Lemmas \ref{lem:1CFO} and \ref{lem:hpq} at hand, the proof of Lemma \ref{lem:liphorizgrad} follows exactly as in the proof of Lemma \cite[Lemma 5.4]{CFO19}.

We will now discuss the proof of Theorem \ref{thm-remov}. Having Lemma \ref{lem:liphorizgrad} at our disposal, the proof of Theorem \ref{thm-remov} proceeds in the same way (with obvious modifications) as in the proof of \cite[Theorem 5.1]{CFO19}. We will only provide an outline for convenience of the reader.
\begin{proof}[Outline of the proof of Theorem \ref{thm-remov}] Consider the coordinate truncated $\G$-Riesz transforms:
$$\mathcal{R}^i_{\mu,\varepsilon} f(p)=\int_{\|q^{-1}\cdot p\|>\varepsilon} \mathsf{R}^i(q^{-1}\cdot p) f(q) d \mu (q), \quad p \in \G,f \in L^2(\mu)\; i=1,\dots,n$$
where $\mathsf{R}(p):=(\mathsf{R}^1(p), \dots, \mathsf{R}^n(p))=\nabla_{\G} \Sigma(p)$. We will also need their smoothened versions. Let $\phi:\R \to [0,1]$ be a $C^{\infty}$ function such that $\phi|_{(-1/2,1/2)}=0$ and $\phi|_{\R \stm (-1,1)}=1$. Let
\begin{displaymath}
\widetilde{\mathcal{R}}^i_{\mu,\varepsilon}(f)(p)=\int \phi \left( \frac{\|q^{-1}\cdot p\|}{\varepsilon}\right)\mathsf{R}^i(q^{-1}\cdot p)f(q) \, d\mu(q),\quad i=1,\dots,n, \quad f \in L^2(\mu).
\end{displaymath}
Moreover, if $\nu$ is a signed Radon measure in $\G$ we define
$$\widetilde{\mathcal{R}}^i_{\varepsilon}\nu(p)=\int \phi \left( \frac{\|q^{-1}\cdot p\|}{\varepsilon}\right)\mathsf{R}^i(q^{-1}\cdot p) \, d\nu(q),\quad i=1,\dots,n.$$
We let $\mathcal{M}(\G),$ be the space of  signed Radon measures on $\G$  with finite total variation $\|\nu\|_{TV}$. It is well known that  $(\mathcal{M}(\G),\|\cdot\|_{TV})$ is the dual of $(C_0(\G), \|\cdot\|_\infty)$ where $C_0(\G)=C_0(\R^N)$ denotes the vector space of continuous functions which vanish at infinity.


Since, by assumption, the operators $\mathcal{R}^i_{\mu,\varepsilon},i=1,\dots,n,$ are uniformly bounded in $L^2(\mu)$, \eqref{eq:adjoint} implies that their adjoints $\mathcal{R}^{i,\ast}_{\mu,\varepsilon},i=1,\dots,n,$ are uniformly bounded in $L^2(\mu)$ as well. Using this fact and a standard comparison argument involving the maximal function, it's not difficult to show that the adjoints of the smoothened coordinate Riesz transforms $\widetilde{\mathcal{R}}^{i,\ast}_{\mu,\varepsilon}$ are also uniformly bounded in $L^2(\mu)$.
By a result of Nazarov, Treil and Volberg \cite[Corollary 9.2]{MR1626935}, we can then conclude that the operators $\tilde{\mathcal{R}}^{i,\ast}_{\ve}: \mathcal{M}(\G) \to L^{1,\infty}(\mu)$
are uniformly bounded; i.e. for every $\lambda>0$ and every $\nu \in \mathcal{M}(\G)$,
\begin{equation}
\label{eq:weak11}
\mu (\{p \in \G: |\tilde{\mathcal{R}}^{i,\ast}_{\ve}\nu (p)|> \lambda\}) \leq C \frac{\|\nu\|_{TV}}{\lambda}, \quad i=1,\dots,n,
\end{equation}
where $C$ is an absolute constant.

Moreover, 
\begin{equation}
\label{eq:smoothendriesz}
\widetilde{\mathcal{R}}^i_{\varepsilon}:\mathcal{M}(\G)\to \mathcal{C}_0(\G)\quad \mbox{ and } \quad\widetilde{\mathcal{R}}^{i,\ast}_{\varepsilon}:\mathcal{M}(\G)\to \mathcal{C}_0(\G).
\end{equation}
This is the reason why smoothened versions are needed, since the operators $\mathcal{R}^i_{\varepsilon}$ do not map $\mathcal{M}(\G)$ to $\mathcal{C}_0(\G)$.

From now on we fix a compact  $E \subset \supp \mu$  with $0<\mu(E)<\infty$. With \eqref{eq:weak11} and \eqref{eq:smoothendriesz} at our disposal, we can apply a well known theorem due to Davie and \O{}ksendal \cite{DO} (see also \cite[Lemma 4.2]{MR1372240} or \cite[Theorem 4.6]{tolsabook}) to the smoothened operators $\mathcal{R}^i_{\varepsilon}$ and obtain, for every $\ve>0$,  a function $h_{\ve} \in L^{\infty}(\mu)$ which satisfies
\begin{enumerate}
\item $0 \leq h_{\ve}(p) \leq \chi_{E}(p)$ for $\mu$ almost every $p \in \G$, 
\item \label{eq:hemeas}
$\int h_{\ve} \, d\mu \geq \mu(E)/2$
\item \label{eq:linfbound}
$\|\tilde{\mathcal{R}}^i_{\mu,\varepsilon} h_{\ve}\|_\infty \lesssim 1.$
\end{enumerate}

We now consider the functions
$$f_{\ve}(p)=\int \Sigma(q^{-1} \cdot p) h_{\ve}(q) \, d\mu(q), \quad p \in \G.$$
By the left invariance of $\nabla_{\G}$:
$$\nabla_{\G} f_{\ve} (p)=(\widetilde{\mathcal{R}}^i_{\mu,\varepsilon} h_{\ve}(p),\dots,\widetilde{\mathcal{R}}^2_{\mu,\varepsilon} h_{\ve}(p))$$
for $p \in \G$ such that $d(p,E)\geq \ve$. Thus, for such $p$, \eqref{eq:linfbound} implies
\begin{equation}
\label{eq:nablabound}
|\nabla_{\G} f_{\ve} (p)| \lesssim 1
\end{equation}

By the Banach-Alaoglu Theorem, see  \cite[Corollary 3.30]{brezis}, there exists a sequence $\ve_n \to 0$ and a function $h$ with $\|h\|_{L^\infty(\mu|_{E})} \leq 1$ such that
\begin{equation}
\label{eq:weakstar}\int_E h_{\ve_n}g\, d\mu \to \int_E hg\, d\mu, \quad \mbox{ for all }g \in L^1(\mu|_{E}).
\end{equation}
By \eqref{eq:weakstar} and \eqref{eq:hemeas} we see that
\begin{equation}
\label{eq:lastbound}
\int_E h \, d\mu \geq \mu(E)/2.
\end{equation}

We now set $\nu=h \,d\mu$. Note that  $\supp \nu \subset E$ (assuming that $h=0$ outside $E$) and $\nu$ is $(Q-1)$--upper regular. Let
$$f(p) :=\int \Sigma(q^{-1}\cdot p) d\nu(q), \qquad p \in \G.$$
We will show that $f$ is Lipschitz,  harmonic in $\G \stm E$  but not harmonic in $\G$. This will establish that $E$, and thus $\supp \mu$ is not removable.


First, by the equicontinuity of $(f_{\ve_n})$ 
on compact subsets of $\G \stm E$, we can apply the Arzel\`a-Ascoli theorem
to get a sequence $(f_{\ve_{n_l}})$  which converges uniformly
on compact subsets of $\G \stm E$. Thus, by the Mean Value Theorem for
sub-Laplacians and its converse, see \cite[Theorem 5.5.4 and Theorem 5.6.3]{BLU},  we deduce that that $f$ is harmonic in $\G \stm E$.

We will use Lemma \ref{lem:liphorizgrad} in order to show that $f$ is Lipschitz in $\G$. First, a standard argument is used to show that $f$ is continuous, see \cite[]{CFO19}. Since $f$ is harmonic in $\G \stm E$, H\"ormander's theorem \cite[Theorem 1]{BLU}  implies that  $f \in C^\infty (\G \setminus E)$. Thus it suffices to show that $\nabla_{\G} f \in L^{\infty}(\G \setminus E)$. This is achieved by an application of \cite[Proposition 2.4]{CMT}, as in \cite[pages]{CFO19}

It remains to show that $f$ is {\em not} harmonic in $\G$. Recalling \cite[Definition 5.3.1 (iii)]{BLU} we deduce that
$$\langle \Delta_{\G} f, 1\rangle=-\int h\, d\mu \leq -\, \mu(E)/2<0.$$
Thus, $f$ is not harmonic in $\G$.
\end{proof}
We now briefly discuss  Corollaries \ref{riesznonremovable} and \ref{cororem2}. For Corollary \ref{riesznonremovable} let $\phi \in C^{1,\alpha}(\mathbb{W})$ with $\alpha > 0$, and let $\Sigma$ be the intrinsic graph of $\phi$. Let $E \subset \Sigma$ be a closed set with $\mathcal{H}^{Q-1}(E) > 0$. By Remark \ref{remk:c1alip} and the fact that the restriction of $\mathcal{H}^{Q-1}$ on intrinsic Lipschitz graphs is $(Q-1)$-ADR (see \cite[Theorem 3.9]{FS}) we deduce that   $\mu = \mathcal{H}^{Q-1}|_{E}$ is $(Q-1)$-upper regular. Therefore, Theorem \ref{thm-remov} and Corollary \ref{maincoro} imply that $E$ is not removable for Lipschitz harmonic functions. Corollary \ref{cororem2} follows as in \cite[Corollary 5.7]{CFO19} and we skip the details.



\bibliographystyle{amsalpha}
\bibliography{ref}
\end{document}